\definecolor{linkcolour}{rgb}{0,0.2,0.6}
\definecolor{bobby}{gray}{0}
\definecolor{LightCyan}{rgb}{0.88,1,1}
\newtheorem{theorem}{Theorem}[section]
\newtheorem{lemma}[theorem]{Lemma}
\newtheorem{proposition}[theorem]{Proposition}
\theoremstyle{definition}
\newtheorem{definition}[theorem]{Definition}
\newtheorem{convention}[theorem]{Convention}
\newtheorem{example}[theorem]{Example}
\theoremstyle{remark}
\newtheorem{remark}[theorem]{Remark}
\numberwithin{equation}{section}
\newcommand{\separ}{\!\!\;\!\!\;}
\newcommand\dcap{\mathrel{\ooalign{\rotatebox[origin=c]{-90}{$\longrightarrow$}\cr\kern0.4ex\hbox{$\not$}}}}
\begin{document}

\large

\title{Category theory for genetics I:\\mutations and sequence alignments}

\author{R\'{e}my Tuy\'{e}ras}
\address{M.I.T., Department of Mathematics, 77 Massachusetts avenue, Cambridge, MA 02139}
\curraddr{}
\email{rtuyeras@mit.edu}
\thanks{This research was supported by the AFOSR grant, \emph{Categorical approach to agent interaction}, FA9550-14-1-0031 and the AFOSR grant, \emph{Pixel matrices and other compositional analyses of interconnected systems}, FA9550-17-1-0058.}

\date{}

\dedicatory{}

\begin{abstract}
The present article is the first of a series whose goal is to define a logical formalism in which it is possible to reason about genetics. In this paper, we introduce the main concepts of our language whose domain of discourse consists of a class of limit-sketches and their associated models. While our program will aim to show that different phenomena of genetics can be modeled by changing the category in which the models take their values, in this paper, we study models in the category of sets to capture mutation mechanisms such as insertions, deletions, substitutions, duplications and inversions. We show how the proposed formalism can be used for constructing multiple sequence alignments with an emphasis on mutation mechanisms.
\end{abstract}

\maketitle

\section{Introduction}

\subsection{Short presentation}
The goal of the present article is to define a type of algebraic structures in which it is possible to \emph{do genetics}. The main operation provided by these structures is a formal way of `gluing' different pieces of information together. To motivate the various notions introduced in this paper, we focus on a particular example, namely the construction of multiple sequence alignments, which are often used in phylogenetics. The proposed formalism can be applied to many other situations that would more generally look at sequential polymer comparisons and/or interactions.

\subsection{Motivations}
Our objective is to construct a bridge between two completely disconnected domains of science, specifically genetics and category theory, through a series of papers. While genetics is well-known for its complexity, category theory is recognized for its clarity and expressive power \cite{SpivakBook,BrownPorter,MacLane}. The goal of the present program would be to reach a level of abstraction that would allow one to tackle questions whose formulation are too complicated to be addressed with the current tools.

The language of the present paper is rather mathematical, but the results and definitions that it contains always try to capture the biological reality. Throughout the paper, some terms might be used in a biological sense while others might be used in a mathematical one -- this will usually be specified. For instance, the sentence ``a structure in which it is possible to do genetics'' means that we want to define a formal language rather than a model of some particular living body. The need for such an abstraction, in biology, has, for example, been recognized in \cite{Lazebnik,Servedia14}. 

Attempts at linking genetics (or in fact molecular biology) to categorical thinking are not new. A first example is \cite{Japanese_work}, in which a category-like formalism is used to discuss the algebraic properties of ``DNA wallpapers''. Another work is \cite{slice_bio}, in which Carbone \& Gromov model DNA, RNA and proteins by using topological and geometrical objects such as surfaces and moduli spaces. The program proposed herein tries to understand the mechanisms of genetics in themselves by forgetting the spacial aspect and focusing on the biological operations occurring in the body. Such an algebraic approach to biology has already been discussed, from the point of view of neuroscience, in several unpublished works by Ehresmann (for example, see \cite{A_Ehresmann}) using the concepts of \emph{limit} and \emph{cone}. The present paper takes a step further, in the context of genetics, by providing a precise `limit theory' (in fact, a \emph{limit sketch}) that can be used to formalize precise concepts of genetics.
In this respect, our structures will define formal environments in which one wants to express a problem and say things about its solution.

In addition to offering a formalism, the proposed program aims to tackle technical and/or conceptual problems of various sub-fields of genetics. While the next article \cite{Recomb} will focus on questions related to genotypes, phenotypes, haplotypes, homologous recombination, and genetic linkage, the present article focuses on questions related to the construction of multiple sequence alignments \cite{Rosenberg,Mount,Daugelaite} from a mechanistic point of view (see section \ref{ssec:Main_example} for an introduction).

Specifically, the present work is an attempt to give a categorical answer to the program proposed in \cite{MorrisonFramework,MorrisonWhy} regarding the construction of multiple sequence alignments by trying to ``[recognize] mechanisms rather than assuming that all the variation occurs at random [at every position in the DNA strand]'' \cite[page 156, right col., l. 5]{MorrisonFramework}. More precisely, our goal is to show that the language of category theory can be used to put more emphasis on evolutionary mechanisms so that ``[mutation] events [can] be identified as the alignment proceeds rather than being identified after the alignment is completed''\cite[page 156, right col., l. 9]{MorrisonFramework}.

Finally, the reader can find a Python library that aims to implement the content of the whole program at the web address: \url{https://github.com/remytuyeras/pedigrad-library}. The library will be updated as the program evolves towards more tools.

\subsection{Road map and results}
The goal of the present paper is to define a class of theories, called \emph{chromologies}, whose models, called \emph{pedigrads}, will be shown (through the program) to recover various aspects of genetics by changing the associated categories of values. 

We begin by defining chromologies in sections \ref{ssec:pre-ordered_sets} through section \ref{ssec:chromologies}, while the pedigrads for these theories will be defined in sections \ref{ssec:Logical_systems} \& \ref{ssec:Pedigrads}. Intuitively, chromologies allow us to do all sorts of basic DNA manipulations such as sequence alignments, CRISPR \cite{Pennisi} and homologous recombination, whereas the pedigrads allow us to give a context to these operations (which can be handled differently depending on the environment in which they are processed).

In section \ref{sec:Examples_of_pedigrads_in_sets}, we define a class of functors (Definition \ref{def:set_E_b_varepsilon}) that model the environments of DNA sequences (Example \ref{exa:elements_as_words}). We show that these functors can be endowed with two types of pedigrad structures in the category of sets (Theorems \ref{theo:E_b_varepsilon_W_iso_pedigrad_exactly_distributive} \& \ref{theo:E_b_varepsilon_W_surj_pedigrad_injective}): the first type detects the exact consistency of the data while the second type detects the consistency of the data up to uncertainty (see Example \ref{exa:right_kan_extension_data_consistency}). These functors are then used to formalize the concept of sequence alignment in terms of a functor (Definition \ref{def:sequence_alignment}). We will see that taking the right Kan extension of a sequence alignment functor can be viewed as constructing multiple sequence alignments (Remark \ref{rem:RKE_gluing_tables}). In Example \ref{exa:Global-local_seq_alignments}, we will see that the right Kan extension of a sequence alignment functor contains both local and global pieces of information that inform us of the presence of uncertainties in the integrated data. At the end of the section, we will see in this uncertainty a justification for the concepts of chromology and pedigrad, which will give us ways to locate, specify or isolate the existing uncertainties (see Example \ref{exa:uncertainty_vs_chromologies}).

In section \ref{sec:Solving_main_example}, we will introduce the concept of a slice of a sequence alignment functor (Definition \ref{def:Slices}), which will allow us to resolve the mentioned uncertainties and select multiple sequence alignments that are consistent with the overall data. First, in section \ref{sec:slices_seq_alignment}, we will discuss the selection of consistent sequence alignments through the use of chromologies. Then, in Remark \ref{rem:finding_right_seq_alignment}, we will suggest an algorithm for constructing multiple sequence alignments via the use of slices. Finally, in Section \ref{ssec:slices_and_mechanisms}, we will show that the presence of uncertainties in the right Kan extension of a sequence alignment functor can be due to mutation mechanisms. We will show that the resolution of these uncertainties, permitted by slices, gives us a way to recognize mutation mechanisms (Examples \ref{exa:slices_and_mechanisms_duplications} \& \ref{exa:slices_and_mechanisms_inversions}).

\subsection{Acknowledgments}
I would like to thank the referee for their very useful comments and remarks, which led to a significant improvement of an earlier version of this paper.
I would also like to thank Brendan Fong, David Spivak and Eric Neumann for useful discussions. Finally, I would like to thank Andres Saez and Anjanet Loon for their careful reading of this manuscript.

\section{Chromologies and Pedigrads}\label{sec:Chromologies_and_Pedigrads}

The goal of this section is to introduce a set of theories whose logical models try to capture the logic of genetics. To justify why our theories look the way they do, we need to recall a few facts regarding the construction of theories in general. First, recall that, classically, models for theories are defined as sets equipped with some operations. For instance, a \emph{ring} is a set $R$ equipped with two operations $\cdot:R \times R \to R$ and $+:R \times R \to R$ making certain diagrams commute.

More categorically, rings are also product-preserving functors from a certain  product sketch\footnote{A small category equipped with a subset of its wide spans (see Definition \ref{def:wide_spans}).} $\mathtt{Ring}$ (the theory) to the category $\mathbf{Set}$ of sets and functions \cite{Ehresmann}. This functorial point of view was introduced by Lawvere \cite{Lawvere1963} in 1963 via the concept of what is now called a \emph{Lawvere theory} -- the theory $\mathtt{Ring}$ being an example. The advantage of functors over sets equipped with functions is that functors allow us to clearly distinguish between what is intrinsically true in a model (via the theory) and what can occasionally be true in the model (via the images of the functor). Then, the formalism accompanying the language of functors allows us to more carefully think about the mechanisms governing the models.

Since Lawvere theories were meant to capture the logic of algebraic structures equipped with multivariate functions, their objects were taken to be the set of natural numbers in order to specify the arities of the functions. Along those lines, since the goal of the present section is to define a theory that captures the logic of genetics and whose operations take DNA segments as inputs, the objects of our theory will look like DNA segments. 
Note that, while, in rings, one \emph{adds} and \emph{multiplies} terms together, in genetics, one \emph{cuts}, \emph{aligns} and \emph{recombines} DNA strands together. Therefore, our theory will be based on these operations.

More specifically, recall that an integer object in a Lawvere theory can be represented as a finite sequence of atoms; e.g the object 6 would be represented by six atoms as follows.
\begin{equation}\label{eq:Lawvere_integers}
6 = \xymatrix@C-30pt{(\bullet&\bullet&\bullet&\bullet&\bullet&\bullet)}
\end{equation}
These atoms can make it easier to see how the models defined on the Lawvere theory send the integer objects to the product objects in the category of values; \textit{e.g.} for a given functor $R$, the image $R(6)$ would be sent to a product of the following form where $R(\bullet) = R(1)$.
\[
R(\bullet) \times R(\bullet) \times R(\bullet) \times R(\bullet) \times R(\bullet) \times R(\bullet)
\]
In the case of DNA, the idea is to copy the previous picture, but by adding enough information to be able to model genetic mechanisms. If one looks at the type of pictures drawn by biologists to explain homologous recombination, alignment methods or even genetic linkage, one can often see pictures of chromosomal patches subdivided in terms of selected and masked regions, as shown below.
\[
\includegraphics[height=5cm]{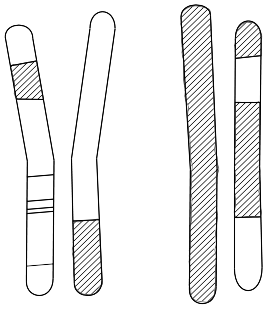}
\]
These colored regions are obviously reminiscent of the term \emph{chromo-some}\footnote{meaning \emph{color-body}} itself. The regional separations are also reminiscent of some sort of topology -- or metric. If one tries to merge these topological and colored components with the type of atomic representation given in (\ref{eq:Lawvere_integers}), we are likely to end up with the following type of pictures.
\begin{equation}\label{eq:intro_representation_patch}
\xymatrix@C-30pt{
(\bullet&\bullet&\bullet)&(\circ&\circ)&(\bullet&\bullet&\bullet&\bullet)&(\bullet&\bullet&\bullet&\bullet&\bullet)&(\circ&\circ&\circ)&(\circ)
}
\end{equation}
In picture (\ref{eq:intro_representation_patch}), the black nodes could indicate the regions of the chromosome that one wants to use while the white nodes could indicate the parts of the chromosome that one wants to ignore (mask). Note that a black-and-white paper does not give more than two colors to color the previous type of objects. Therefore, we will not hesitate to use labels to represent new colors. 
Formally, our sets of colors will be encoded by pre-ordered sets, whose semantics will allow us, among other operations, to  \emph{select} and \emph{cut}.

\subsection{Main example}\label{ssec:Main_example}
To help the exposition of the present paper, most of our examples will focus on a single problem, which will give a story to our demonstration. Each example will illustrate how the mathematical definitions given in this paper can be used to clarify, explain or solve precise aspects of our problem. 

Without any further introduction, our problem will look at the alleles of four different individuals for the same gene. Our goal will be to show how one can use chromologies and their pedigrads to help us relate these four individuals. In general, the first step to establishing the genealogy of a set of individuals is to align their genetic data according to an evolution model (\emph{e.g.} using substitution, insertion and deletion mutations) in order to determine how one individual evolved from another one (see \cite{Rosenberg,Mount,Daugelaite}). 

In our case, we will consider the set of individuals given in the following table with respect to the corresponding genetic data shown on the right.
\begin{center}
\begin{tabular}{l|l}
\hline
\multicolumn{1}{c|}{\cellcolor[gray]{0.8}Individuals} & \multicolumn{1}{c}{\cellcolor[gray]{0.8}Alleles} \\
\hline
\texttt{Anne} & $\mathtt{ACCGACTG}$ \\
\texttt{Bob} & $\mathtt{ACATCTG}$ \\
\texttt{Craig} & $\mathtt{ACCGTCA}$ \\
\texttt{Doug} & $\mathtt{ACTACTG}$ \\
\end{tabular}
\end{center}

Recall that, in bioinformatics, the only analytic method that can compare and align a set of DNA sequences is the \emph{dynamic programming algorithm} (see \cite{Simossis} or \cite[Chapter 3 and page 71]{Mount}). Other methods obviously exist, but these require heuristics that generally only produce an approximation of the best alignment \cite{Daugelaite,Feng,Chowdhury,Loytynoja}. A common aspect between these methods is that they all compare DNA sequences two by two. Many of them even rely on the pairwise dynamic programming algorithm, which can compare any given pair of sequences together. For instance, to compare the genetic data of two individuals, say \texttt{Anne} and \texttt{Bob}, via the dynamic programming algorithm \cite[page 71]{Mount}, we would first draw a table as given below, on the left-hand side, where second topmost row and second leftmost column are initialized with canonical scores (called \emph{gap penalities}).

\[
\begin{array}{|c|c|c|c|c|c|c|c|c|c|}
\hline
&\cellcolor[gray]{0.8}\varepsilon&\cellcolor[gray]{0.8}\mathtt{A}&\cellcolor[gray]{0.8}\mathtt{C}&\cellcolor[gray]{0.8}\mathtt{C}&\cellcolor[gray]{0.8}\mathtt{G}&\cellcolor[gray]{0.8}\mathtt{A}&\cellcolor[gray]{0.8}\mathtt{C}&\cellcolor[gray]{0.8}\mathtt{T}&\cellcolor[gray]{0.8}\mathtt{G}\\
\hline
\cellcolor[gray]{0.8}\varepsilon&0&1&2&3&4&5&6&7&8\\
\hline
\cellcolor[gray]{0.8}\mathtt{A}&1&&&&&&&&\\
\hline
\cellcolor[gray]{0.8}\mathtt{C}&2&&&&&&&&\\
\hline
\cellcolor[gray]{0.8}\mathtt{A}&3&&&&&&&&\\
\hline
\cellcolor[gray]{0.8}\mathtt{T}&4&&&&&&&&\\
\hline
\cellcolor[gray]{0.8}\mathtt{C}&5&&&&&&&&\\
\hline
\cellcolor[gray]{0.8}\mathtt{T}&6&&&&&&&&\\
\hline
\cellcolor[gray]{0.8}\mathtt{G}&7&&&&&&&&\\
\hline
\end{array}
\quad\quad
\Rightarrow
\quad\quad
\begin{array}{|c|c|c|c|c|c|c|c|c|c|}
\hline
&\cellcolor[gray]{0.8}\varepsilon&\cellcolor[gray]{0.8}\mathtt{A}&\cellcolor[gray]{0.8}\mathtt{C}&\cellcolor[gray]{0.8}\mathtt{C}&\cellcolor[gray]{0.8}\mathtt{G}&\cellcolor[gray]{0.8}\mathtt{A}&\cellcolor[gray]{0.8}\mathtt{C}&\cellcolor[gray]{0.8}\mathtt{T}&\cellcolor[gray]{0.8}\mathtt{G}\\
\hline
\cellcolor[gray]{0.8}\varepsilon&0&1&2&3&4&5&6&7&8\\
\hline
\cellcolor[gray]{0.8}\mathtt{A}&1&0&1&2&3&4&5&6&7\\
\hline
\cellcolor[gray]{0.8}\mathtt{C}&2&1&0&1&2&3&4&5&6\\
\hline
\cellcolor[gray]{0.8}\mathtt{A}&3&2&1&1&2&2&3&4&5\\
\hline
\cellcolor[gray]{0.8}\mathtt{T}&4&3&2&2&2&3&3&3&4\\
\hline
\cellcolor[gray]{0.8}\mathtt{C}&5&4&3&2&3&3&3&4&4\\
\hline
\cellcolor[gray]{0.8}\mathtt{T}&6&5&4&3&3&4&4&3&4\\
\hline
\cellcolor[gray]{0.8}\mathtt{G}&7&6&5&4&3&4&5&4&3\\
\hline
\end{array}
\]
Then, we would produce the table given on the right-hand side by following two types of scoring rules, one taking care of matches and the other one taking care of mismatches. Usually, these rules tell us how to fill out a box in the case where we have a two-by-two matrix whose bottom-right corner is empty and whose remaining boxes are already filled with scores, as shown below.
\[
\begin{array}{|c|c|}
\hline
p&q\\
\hline
r&\\
\hline
\end{array}
\]
In our case, the scoring table was filled with the following rules:
\begin{itemize}
\item[1)] if the nucleotides labeling the column and the row of the empty box are equal, then the empty box should be filled with the score $p$;
\item[2)] if the nucleotides labeling the column and the row of the empty box are different, then the empty box should be filled with the score $\mathsf{min}(p,q,r)+1$;
\end{itemize}

The best alignments for \texttt{Anne} and \texttt{Bob} are then obtained by tracing back the previous rules from the bottom-right corner of the table to its top-left corner. All the paths of moves (from right to left) that would make the earlier rules hold describe the nucleotide comparisons that give rise to the best alignments. In the present case, there are more than one paths. One of them is shown in the table given below, on the left (starting from the bottom-right corner). The associated alignment, given on the right, is read from the left-top corner to right-bottom one, where every symbol $\varepsilon$ represents a stationary move from the point of view of the sequences.
\[
\begin{array}{|c|c|c|c|c|c|c|c|c|c|}
\hline
&\cellcolor[gray]{0.8}\varepsilon&\cellcolor[gray]{0.8}\mathtt{A}&\cellcolor[gray]{0.8}\mathtt{C}&\cellcolor[gray]{0.8}\mathtt{C}&\cellcolor[gray]{0.8}\mathtt{G}&\cellcolor[gray]{0.8}\mathtt{A}&\cellcolor[gray]{0.8}\mathtt{C}&\cellcolor[gray]{0.8}\mathtt{T}&\cellcolor[gray]{0.8}\mathtt{G}\\
\hline
\cellcolor[gray]{0.8}\varepsilon&\cellcolor[gray]{0.85}\textbf{0}&1&2&3&4&5&6&7&8\\
\hline
\cellcolor[gray]{0.8}\mathtt{A}&1&\cellcolor[gray]{0.85}\textbf{0}&\cellcolor[gray]{0.85}\textbf{1}&2&3&4&5&6&7\\
\hline
\cellcolor[gray]{0.8}\mathtt{C}&2&1&0&\cellcolor[gray]{0.85}\textbf{1}&\cellcolor[gray]{0.85}\textbf{2}&3&4&5&6\\
\hline
\cellcolor[gray]{0.8}\mathtt{A}&3&2&1&1&\textbf{2}&\cellcolor[gray]{0.85}\textbf{2}&3&4&5\\
\hline
\cellcolor[gray]{0.8}\mathtt{T}&4&3&2&2&\textbf{2}&\cellcolor[gray]{0.85}\textbf{3}&3&3&4\\
\hline
\cellcolor[gray]{0.8}\mathtt{C}&5&4&3&2&3&3&\cellcolor[gray]{0.85}\textbf{3}&4&4\\
\hline
\cellcolor[gray]{0.8}\mathtt{T}&6&5&4&3&3&4&4&\cellcolor[gray]{0.85}\textbf{3}&4\\
\hline
\cellcolor[gray]{0.8}\mathtt{G}&7&6&5&4&3&4&5&4&\cellcolor[gray]{0.85}\textbf{3}\\
\hline
\end{array}
\quad\quad\quad\Rightarrow \quad\quad\quad
\begin{array}{cc}
\mathtt{A}\mathtt{C}\mathtt{C}\mathtt{G}\mathtt{A}\varepsilon\mathtt{C}\mathtt{T}\mathtt{G}&\texttt{Anne}\\
\mathtt{A}\varepsilon\mathtt{C}\varepsilon\mathtt{A}\mathtt{T}\mathtt{C}\mathtt{T}\mathtt{G}&\texttt{Bob}\\
\end{array}
\]
Of course, constructing a sequence alignment for a set of four individuals by only using pairwise comparisons is likely to miss certain optimal alignments. In fact, computing an optimal alignment for our set of individuals would require a generalization of the previous algorithm in a hypercube. The problem with an algorithm based on such a structure is that it can rapidly become computationally expensive. As a result, biologists prefer to use heuristics such as the so-called \emph{progressive method} \cite{Mount,Daugelaite,Feng,Chowdhury,Loytynoja}.

The need for these heuristics show that passing from a two-dimensional point of view to a higher dimensional one, by trying to ``glue'' the previous tables together (in order to reconstruct the hypercube) contains a lot of subtleties. 

In this article, we will show how category theory can help formalize, clarify and reason about this passage by defining formal ``gluing methods'' of two-dimensional tables as used above. These ``gluings algorithms'' will take the form of limit-preserving functors, our so-called \emph{pedigrads}, and the associated gluing instructions will be specified by collections of limit-cones, our so-called \emph{chromologies}.

\subsection{Pre-ordered sets}\label{ssec:pre-ordered_sets}
Throughout the paper, the most basic notions of ordered sets are expected to be known by the reader (\textit{e.g.} partially ordered sets; totally (or linearly) ordered sets; pre-ordered sets; see \cite[Page 11]{MacLane}). However, because pre-orders will play an important role, it was felt appropriate to recall their definition and give some examples of interest in a separate section. We also recall the definition of order-preserving functions and define a category of pre-ordered sets.

\begin{definition}[Pre-ordered sets]
A \emph{pre-ordered set} consists of a set $\Omega$ and a binary relation $\leq$ on $\Omega$ satisfying the following logical implications.
\begin{itemize}
\item[1)] (reflexivity) for every $x \in \Omega$, the relation $x \leq x$ holds;
\item[2)] (transitivity) for every $x,y,z \in \Omega$, if $x \leq y$ and $y \leq z$ hold, then so does $x \leq z$.
\end{itemize}
\end{definition}

\begin{example}\label{exa:pre-ordered_set_0_and_1}
The set $\{0,1\}$ is a pre-ordered set when equipped with the relations $0 \leq 1$; $0 \leq 0$ and $1 \leq 1$. The resulting pre-ordered set is usually known as the \emph{Boolean} pre-ordered set.
\end{example}

\begin{remark}[Representation]\label{rem:representation_pre_ordered_set}
Pre-ordered sets may happen to be sets of labels (or even sets of structures) instead of being sets of integers. In the case of the Boolean pre-ordered set given in Example \ref{exa:pre-ordered_set_0_and_1}, the labels $\mathtt{false}$ and $\mathtt{true}$ will sometimes be used instead of the integers $0$ and $1$, mainly for the sake of clarity when integers are used for another purpose.
\end{remark}

\begin{example}
The set $\{0,1\}$ could also be equipped with the discrete pre-order made of the reflexive relations $0 \leq 0$ and $1 \leq 1$ only.
\end{example}

\begin{example}\label{exa:product_pre-order_set_0_1}
For every positive integer $n$, the $n$-fold Cartesian product $\{0,1\}^{\times n}$ of the pre-ordered set given in Example \ref{exa:pre-ordered_set_0_and_1} is equipped with a pre-order relation $\leq$ that relates two tuples in $\{0,1\}^{\times n}$, say $(x_1,\dots,x_n) \leq (y_1,\dots,y_n)$, if, and only if, the relation $x_i \leq y_i$ holds for every index $i$ between $1$ and $n$.
\end{example}

\begin{example}
The interval $[0,1]$ is a pre-ordered set for the usual pre-order ``being less than or equal to'' defined on the set $\mathbb{R}$ of real numbers.
\end{example}

\begin{remark}[Pre-order categories]
Recall that a pre-ordered set is also a category in which there exists at most one arrow between every pair of objects. In the sequel, a pre-ordered set will sometimes be called a \emph{pre-order category} to emphasize its categorical nature.
\end{remark}

\begin{definition}[Order-preserving functions]
Let $(\Omega_1,\leq_1)$ and $(\Omega_2,\leq_2)$ be two pre-ordered sets. We shall speak of an \emph{order-preserving function} from $(\Omega_1,\leq_1)$ to $(\Omega_2,\leq_2)$ to refer to a function $f:\Omega_1 \to \Omega_2$ for which every relation $x \leq_1 y$ in $\Omega_1$ gives rise to a relation $f(x) \leq_2 f(y)$ in $\Omega_2$.
\end{definition}

\begin{convention}[Notation]
We shall denote by $\mathbf{pOrd}$ the category whose objects are pre-ordered sets and whose morphisms are order-preserving functions.
\end{convention}

\begin{example}[Projection]\label{exa:product_morphisms_set_0_1}
For every positive integer $n$, the $n$-fold Cartesian product $\{0,1\}^{\times n}$ of Example \ref{exa:product_pre-order_set_0_1} is equipped with a canonical collection of $n$ functions $\pi_i:\{0,1\}^{\times n} \to \{0,1\}$, for each $i \in \{1,\dots,n\}$, where a function $\pi_i$ sends a tuple $(x_1,\dots,x_n)$ in $\{0,1\}^{\times n}$ to its $i$-th component $x_i$ in $\{0,1\}$. These functions obviously preserve the order relations of $\{0,1\}^{\times n}$ in $\{0,1\}$ and thus define morphisms in $\mathbf{pOrd}$.
\end{example}

\subsection{Finite sets of integers}
For every positive integer $n$, we will denote by $[n]$ the finite set of integers $\{1,2,\dots,n\}$. We will also let $[0]$ denote the empty set. In the sequel, for every non-negative integer $n$, the set $[n]$ will implicitly be equipped with the order associated with the set of integers (note that the order associated with $[0]$ is the empty order).

\subsection{Segments}\label{ssec:Segments}
Let $(\Omega,\preceq)$ denote a pre-ordered set. 
A \emph{segment} over $\Omega$ consists of a pair of non-negative integers $(n_1,n_0)$, an order-preserving surjection\footnote{\emph{i.e.} an order-preserving function that is a surjection.} $t:[n_1] \to [n_0]$ and a function $c:[n_0] \to \Omega$.

\begin{remark}[Representation]
Segments have all the necessary data to encode the type of pictures given in (\ref{eq:intro_representation_patch}). For a segment $(t,c)$ as defined above, the finite set $[n_1]$ represents the range of elements composing the segment
\[
n_1 = \xymatrix@C-30pt{\bullet&\bullet &\, \cdots\,  & \bullet}
\]
while the fibers $t^{-1}(1), \dots, t^{-1}(n_0)$ of the surjection $t:[n_1] \to [n_0]$ gather these elements into patches (see the brackets below).
\[
t = \xymatrix@C-30pt{(\bullet&\bullet&\bullet)&(\bullet&\bullet&\bullet&\bullet)&(\bullet&\bullet&\,\cdots\, &\bullet)&(\bullet& \bullet)}
\]
Finally, the different colors associated with the patches of the segment are specified by the map $c:[n_0] \to \Omega$. For instance, if we take $\Omega$ to be the Boolean pre-ordered set $\{\mathtt{false}\leq \mathtt{true}\}$ of Example \ref{exa:pre-ordered_set_0_and_1} (see Remark \ref{rem:representation_pre_ordered_set}) and we choose to associate the white color with the $\mathtt{false}$ value and the black color with the $\mathtt{true}$ value, then a set of relations of the form $c(1) = \mathtt{false}$, $c(2) = \mathtt{true}$, $\dots$, $c(n_0-1) = \mathtt{true}$, and $c(n_0) = \mathtt{true}$ will be represented by coloring all the elements of $[n_1]$ living in the fibers $t^{-1}(1)$, $t^{-1}(2)$, $\dots$, $t^{-1}(n_0-1)$, and $t^{-1}(n_0)$ in white and then in black up to the last one, as shown below.
\[
(t,c) = \xymatrix@C-30pt{(\circ&\circ&\circ)&(\bullet&\bullet&\bullet&\bullet)&(\bullet&\bullet&\,\cdots\, &\bullet)&(\bullet& \bullet)}
\]
Note that if $\Omega$ contains more elements, then we need to use more colors (which can also be represented by numbers). These colors could also mean all sorts of things, including actions such as \texttt{ignore}, \texttt{read}, \texttt{start reading}, \texttt{stop reading}, \texttt{misread} (or \texttt{mutate}). The pre-order on the colors would then specify semantic priorities between the different tasks or functions associated with the colors (see the table of pictures below). All these features will be illustrated throughout the examples and remarks of section \ref{sec:Examples_of_pedigrads_in_sets}.
\smallskip

\begin{center}
\begin{tabular}{|c|c|c|}
\hline
\cellcolor[gray]{0.8}2 colors & \cellcolor[gray]{0.8}4 colors & \cellcolor[gray]{0.8}5 colors\\
\hline
$\{0,1\}$ & $\{0,1,2,3\}$ & $\{0,1,2,3,4\}$\\
\hline
{$\xymatrix@-15pt{
\fbox{\texttt{read}}\\
\\
\\
\fbox{\texttt{ignore}}\ar[uuu]}$}
&
{$\xymatrix@-15pt{
&\fbox{\texttt{read}}&\\
\fbox{\texttt{start}}\ar[ru]&&\fbox{\texttt{finish}}\ar[lu]\\
&\fbox{\texttt{ignore}}\ar[ur]\ar[ul]&}$}
&
{$\xymatrix@-15pt{
&\fbox{\texttt{read}}&\\
\fbox{\texttt{start}}\ar[ru]&\fbox{\texttt{misread}}\ar[u]&\fbox{\texttt{stop}}\ar[lu]\\
&\fbox{\texttt{ignore}}\ar[u]\ar[ur]\ar[ul]&}$}\\
\hline
\end{tabular}
\end{center}
\end{remark}

\begin{remark}[Notations]
Note that the specification of the data $n_1$ and $n_0$ is redundant with the data of the function $t$ and $c$. Later on, a segment will often be denoted as a pair $(t,c)$ and, every so often, as an arrow $(t,c):[n_1] \multimap [n_0]$.
\end{remark}

\begin{convention}[Domains, topologies \& types]
For every segment $(t,c):[n_1] \multimap [n_0]$, the data $[n_1]$ will be called the \emph{domain} of $(t,c)$, the data $t$ will be called the \emph{topology} of $(t,c)$ and the data $(n_1,n_0)$ will be called the \emph{type} of $(t,c)$. The type of a segment will always be specified as an arrow of the form $[n_1] \multimap [n_0]$.
\end{convention}

\begin{definition}[Homologous segments]
Two segments $(t,c)$ and $(t',c')$ over $\Omega$ will be said to be \emph{homologous} if their topologies $t$ and $t'$ are equal.
\end{definition}

\begin{definition}[Quasi-homologous segments]
Two segments $(t,c):[n_1] \multimap [n_0]$ and $(t',c'):[n_1'] \multimap [n_0']$ over $\Omega$ will be said to be \emph{quasi-homologous} if their domains $[n_1]$ and $[n_1']$ are equal.
\end{definition}

\subsection{Morphisms of segments}\label{ssec:Morphisms_of_chromosomal_patches}
Let $(\Omega,\preceq)$ be a pre-ordered set and $(t,c):[n_1] \multimap [n_0]$ and $(t',c'):[n_1'] \multimap [n_0']$ be two segments over $\Omega$. A morphism of segments from $(t,c)$ to $(t',c')$ consists of
\begin{itemize}
\item[1)] an order-preserving injection $f_1:[n_1] \to [n_1']$;
\item[2)] an order-preserving function $f_0:[n_0] \to [n_0']$;
\end{itemize}
such that the inequality $c' \circ f_0(i) \preceq c(i)$ holds for every $i \in [n_0]$ and the following diagram commutes.
\[
\xymatrix{
[n_1]\ar@{->>}[r]^{t}\ar@{)->}[d]_{f_1}&[n_0]\ar[d]^{f_0}\\
[n_1']\ar@{->>}[r]^{t'}&[n_0']
}
\]
It is easy to check that the class of morphisms of segments over $\Omega$ is stable under component-wise compositions and admits identities on every segment. We will denote by $\mathbf{Seg}(\Omega)$ the resulting category whose objects are segments over $\Omega$ and whose arrows are morphisms between these.

From now on, we will regard the notations $f_1$ and $f_0$ given above as a conventional notation for morphisms in $\mathbf{Seg}(\Omega)$. Below, we give several examples of typical morphisms in $\mathbf{Seg}(\Omega)$ where $\Omega$ is taken to be the Boolean pre-ordered set of Example \ref{exa:pre-ordered_set_0_and_1}.

\begin{example}[Locality]
If both components $f_1$ and $f_0$ are identities, then the inequality $c' \circ f_0\preceq c$ `decreases' the colors of the segment as illustrated below, on the left.
\[
\begin{array}{c}
\xymatrix@C-30pt{
(\bullet&\bullet&\bullet)&(\bullet&\bullet)&(\bullet&\bullet&\bullet&\bullet)&(\bullet&\bullet&\bullet&\bullet&\bullet)&(\circ&\circ&\circ)&(\bullet)
}\\
\rotatebox[origin=c]{-90}{$\longrightarrow$} \\
\xymatrix@C-30pt{
(\circ&\circ&\circ)&(\circ&\circ)&(\bullet&\bullet&\bullet&\bullet)&(\bullet&\bullet&\bullet&\bullet&\bullet)&(\circ&\circ&\circ)&(\circ)
}
\end{array}
\quad\quad\quad\quad\quad
\begin{array}{c}
\xymatrix@C-30pt@R-11pt{
(\dots)&(\circ&\circ\ar[d]|{\xcancel{\quad\,\,}}&\circ)&(\dots)\\
(\dots)&(\bullet&\bullet&\bullet)&(\dots)
}
\end{array}
\]
\textit{Interpretation:} This type of morphisms tells us that one is able to select/cut local patches from a segment. This is, for instance, the type of morphisms that one may want to use to model CRISPR, namely separating a patch from a segment. Note that, because reading a segment (in black) has a higher semantic priority than ignoring it (in white), turning white regions into black ones, as shown above, on the right, is forbidden. The order relation on the colors can therefore be a way of encoding forgetful operations (\textit{e.g.} irreversible or energy-releasing events).
\end{example}

\begin{example}[Relativity]\label{exa:Relativity_morphism}
If only the component $f_1$ is an identity morphism, then the component $f_0$ can merge the regions defining the topology.
\[
\begin{array}{c}
\xymatrix@C-30pt{
(\bullet&\bullet&\bullet)&(\circ&\circ)&(\bullet&\bullet&\bullet&\bullet)&(\bullet&\bullet&\bullet&\bullet&\bullet)&(\circ&\circ&\circ)&(\circ)
}\\
\rotatebox[origin=c]{-90}{$\longrightarrow$} \\
\xymatrix@C-30pt{
(\circ&\circ&\circ&\circ&\circ)&(\bullet&\bullet&\bullet&\bullet&\bullet&\bullet&\bullet&\bullet&\bullet)&(\circ&\circ&\circ&\circ)
}
\end{array}
\]
\textit{Interpretation:} This type of morphisms implies that the way one parses the patches of a segment influences the way one parses the whole segment (\textit{e.g.} from codons to genes). However, because there is no arrow that increases the number of brackets from its domain to its codomain, the way one parses a segment might not necessarily reflect the way the patches are parsed (\textit{e.g.} from gene to codons).
\end{example}

\begin{example}[Flexibility]\label{exa:Flexibility_morphism}
If the component $f_1$ is not an identity morphism, then the range of the segment increases. Below, we suppose that the identity $c' \circ f_0 = c$ holds.
\[
\begin{array}{r}
\xymatrix@C-30pt{
(\bullet&\bullet&\bullet)&(\circ&\circ)&(\bullet&\bullet&\bullet&\bullet)&(\bullet&\bullet&\bullet&\bullet&\bullet)&(\circ&\circ&\circ)&(\circ)
}\\
\multicolumn{1}{c}{\rotatebox[origin=c]{-90}{$\longrightarrow$}}\\
\xymatrix@C-30pt{
(\bullet&\bullet&\bullet&\bullet)&(\bullet)&(\circ&\circ&\circ&\circ)&(\bullet&\bullet&\bullet&\bullet)&(\bullet&\bullet&\bullet&\bullet&\bullet)&(\circ&\circ&\circ)&(\circ)
}
\end{array}
\]
\textit{Interpretation:} This type of morphisms allows one to insert particular nucleotides or spaces in the parsing of a segment. 
For instance, spaces become necessary if one wants to align segments that are not necessarily (quasi-)homologous (this was shown in section \ref{ssec:Main_example}). A morphism inserting a space would then correspond to a choice of `sequence alignment' in bioinformatics (see Example \ref{exa:morphisms_as_inclusion_of_words}, Example \ref{exa:Global-local_seq_alignments} and section \ref{ssec:slices_and_mechanisms}).
\end{example}

\begin{remark}[Initial object]
For every pre-ordered set $(\Omega,\preceq)$, the segment (over $\Omega$) of type $[0] \multimap [0]$ that is given by the obvious order-preserving surjection $!:\emptyset \to \emptyset$ and the canonical function $!:\emptyset \to \Omega$ is an initial object in $\mathbf{Seg}(\Omega)$. Note that such an object is formal and does not really possess any biological interpretation other than giving a way to express the idea of `absence'.
\end{remark}

\subsection{Relating categories of segments}\label{ssec:relating_categories_of_segments}
So far, our examples have only considered categories of segments over the Boolean pre-ordered set $\{0 \leq 1\}$. In practice, Boolean segments are convenient and easy to think about. Thus, it can be useful to have ways to go from a category of segments whose pre-ordered set is not $\{0 \leq 1\}$ to the category of segments whose pre-ordered is $\{0 \leq 1\}$. The goal of Proposition \ref{prop:functor_segment_categories} is to show that this type of transfer is possible.

\begin{proposition}[Functor]\label{prop:functor_segment_categories}
Let $f:(\Omega_1,\preceq_1) \to (\Omega_2,\preceq_2)$ be a morphism in $\mathbf{pOrd}$. The mapping rule $(t,c) \mapsto (t,f \circ c)$ extends to a faithful functor $\mathbf{Seg}(f):\mathbf{Seg}(\Omega_1) \to \mathbf{Seg}(\Omega_2)$ sending a morphism $(f_1,f_0)$ to the same pair $(f_1,f_0)$.
\end{proposition}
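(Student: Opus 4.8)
The plan is to verify in turn that $\mathbf{Seg}(f)$ is well defined on objects, well defined on morphisms, and compatible with identities and composition, and then to read off faithfulness. On objects there is essentially nothing to check: given a segment $(t,c):[n_1] \multimap [n_0]$ over $\Omega_1$, the topology $t:[n_1] \to [n_0]$ is untouched and remains an order-preserving surjection, while $f \circ c:[n_0] \to \Omega_2$ is a genuine function, so $(t, f\circ c)$ is a segment over $\Omega_2$ of the same type $[n_1] \multimap [n_0]$.

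The real content lies in checking that the morphism assignment $(f_1,f_0) \mapsto (f_1,f_0)$ lands in $\mathbf{Seg}(\Omega_2)$. Let $(f_1,f_0)$ be a morphism from $(t,c)$ to $(t',c')$ in $\mathbf{Seg}(\Omega_1)$. The two components $f_1$ and $f_0$ are unchanged, so $f_1$ is still an order-preserving injection, $f_0$ is still an order-preserving function, and the commuting square $t' \circ f_1 = f_0 \circ t$ is literally the same equation as before. The only condition that genuinely involves the colors is the inequality defining a morphism of segments: I must show $(f\circ c')\circ f_0(i) \preceq_2 (f\circ c)(i)$ for every $i \in [n_0]$. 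This is exactly where the hypothesis that $f$ is order-preserving is used: from the relation $c'\circ f_0(i) \preceq_1 c(i)$, which holds by assumption, applying the order-preserving map $f$ yields $f\bigl(c'\circ f_0(i)\bigr) \preceq_2 f\bigl(c(i)\bigr)$, which, after rewriting $f\circ(c'\circ f_0)=(f\circ c')\circ f_0$, is precisely the desired inequality. I expect this step -- transporting the color inequality across $f$ -- to be the main (and indeed only) point requiring an idea; everything else is bookkeeping.

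It then remains to confirm the two functor axioms. Identities in $\mathbf{Seg}(\Omega_1)$ are the pairs consisting of the identity injection on the domain and the identity function on the codomain; since these pairs are preserved verbatim and the identity on $(t,c)$ has the same components as the identity on $(t,f\circ c)$, identities are sent to identities. Composition in $\mathbf{Seg}(\Omega)$ is defined component-wise, so a composite $(g_1,g_0)\circ(f_1,f_0) = (g_1\circ f_1,\, g_0\circ f_0)$ is again sent to itself, whence $\mathbf{Seg}(f)$ preserves composition. Finally, faithfulness is immediate from the defining rule: since $\mathbf{Seg}(f)$ sends every morphism $(f_1,f_0)$ to the identical pair $(f_1,f_0)$, two parallel morphisms with the same image must already agree in both components, so the functor is injective on each hom-set and hence faithful.
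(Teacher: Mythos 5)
Your proof is correct and follows essentially the same route as the paper's: the sole substantive point is transporting the color inequality $c'\circ f_0(i) \preceq_1 c(i)$ through the order-preserving map $f$ to obtain $(f\circ c')\circ f_0(i) \preceq_2 (f\circ c)(i)$, with the commuting square, identities, composition, and faithfulness all being unchanged bookkeeping. (You also correctly quantify the inequality over $i \in [n_0]$, where the paper's proof has a harmless slip writing $[n_1]$.)
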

\begin{proof}
Let $(f_1,f_0):(t,c) \to (t',c')$ be an arrow in $\mathbf{Seg}(\Omega_1)$ and let $[n_1]$ denote the domain of $(t,c)$. Because $f$ is an order-preserving function, the relation $c'\circ f_0(i) \preceq_1 c(i)$, satisfied for every element $i \in [n_1]$, gives rise to a relation $f \circ c'\circ f_0(i) \preceq_2 f \circ c(i)$, for every element $i\in[n_1]$. Since the domain of the segment $(t,f \circ c)$ is also $[n_1]$, the previous relation shows that the pair $(f_1,f_0)$ defines a representative for an arrow of the form $(t,f \circ c) \to (t',f \circ c')$ in $\mathbf{Seg}(\Omega_2)$. The faithfulness property as well as the composition and identity axioms follow easily.
\end{proof}

In fact, Proposition \ref{prop:functor_segment_categories} hides a functor structure on the category of pre-ordered sets, but this structure will not be needed in this paper.

\begin{example}[Preparation example]\label{exa:preparation_example}
The present example is the first of a series that address the goals presented in section \ref{ssec:Main_example}.
Let $(\Omega,\preceq)$ denote the Boolean pre-ordered set $\{0 \leq 1\}$. We follow the notation of Definition \ref{exa:product_pre-order_set_0_1} and denote the pre-ordered set $\{0,1\}^{\times 4}$ as $\Omega^{\times 4}$. Since $\Omega^{\times 4}$ is the 4-fold Cartesian product of $\Omega$, it is equipped with four order-preserving functions $\pi_i:\Omega^{\times 4} \to \Omega$ (see Example \ref{exa:product_morphisms_set_0_1}), which we purposely index with $i \in \{\mathtt{a},\mathtt{b},\mathtt{c},\mathtt{d}\}$. Later, each of these indices will be used to represent one of the four individuals of section \ref{ssec:Main_example}. Now, by Proposition \ref{prop:functor_segment_categories}, a morphism $\pi_i:\Omega^{\times 4} \to \Omega$ induces a functor as follows.
\[
\mathbf{Seg}(\pi_i):\mathbf{Seg}(\Omega^{\times 4}) \to \mathbf{Seg}(\Omega)
\]
If we represent an element $(\mathtt{x_{a}},\mathtt{x_{b}},\mathtt{x_{c}},\mathtt{x_{d}})$ in $\Omega^{\times 4}$ as a word $[\mathtt{x_{a}}\mathtt{x_{b}}\mathtt{x_{c}}\mathtt{x_{d}}]$, then the functor $\mathbf{Seg}(\pi_i)$ satisfies the following mapping rules for the various values of $i$ shown in the rightmost column.
\[
\begin{array}{ccc|c}
\cellcolor[gray]{0.8}\mathbf{Seg}(\Omega^{\times 4})&\cellcolor[gray]{0.8} \longrightarrow & \cellcolor[gray]{0.8}\mathbf{Seg}(\Omega)&\cellcolor[gray]{0.8}i\\
\hline
\xymatrix@C-30pt{([\mathtt{1\separ0\separ1\separ0}]&[\mathtt{1\separ0\separ1\separ0}])&([\mathtt{0\separ1\separ1\separ0}]&[\mathtt{0\separ1\separ1\separ0}]&[\mathtt{0\separ1\separ1\separ0}])&([\mathtt{1\separ1\separ1\separ1}]&[\mathtt{1\separ1\separ1\separ1}])}& \mapsto &
\xymatrix@C-30pt{(\bullet&\bullet)&(\circ&\circ&\circ)&(\bullet&\bullet)} & i = \mathtt{a}\\
\xymatrix@C-30pt{([\mathtt{1\separ0\separ1\separ0}]&[\mathtt{1\separ0\separ1\separ0}])&([\mathtt{0\separ1\separ1\separ0}]&[\mathtt{0\separ1\separ1\separ0}]&[\mathtt{0\separ1\separ1\separ0}])&([\mathtt{1\separ1\separ1\separ1}]&[\mathtt{1\separ1\separ1\separ1}])}& \mapsto &
\xymatrix@C-30pt{(\circ&\circ)&(\bullet&\bullet&\bullet)&(\bullet&\bullet)} & i = \mathtt{b}\\
\xymatrix@C-30pt{([\mathtt{1\separ0\separ1\separ0}]&[\mathtt{1\separ0\separ1\separ0}])&([\mathtt{0\separ1\separ1\separ0}]&[\mathtt{0\separ1\separ1\separ0}]&[\mathtt{0\separ1\separ1\separ0}])&([\mathtt{1\separ1\separ1\separ1}]&[\mathtt{1\separ1\separ1\separ1}])}& \mapsto &
\xymatrix@C-30pt{(\bullet&\bullet)&(\bullet&\bullet&\bullet)&(\bullet&\bullet)} & i = \mathtt{c}\\
\xymatrix@C-30pt{([\mathtt{1\separ0\separ1\separ0}]&[\mathtt{1\separ0\separ1\separ0}])&([\mathtt{0\separ1\separ1\separ0}]&[\mathtt{0\separ1\separ1\separ0}]&[\mathtt{0\separ1\separ1\separ0}])&([\mathtt{1\separ1\separ1\separ1}]&[\mathtt{1\separ1\separ1\separ1}])}& \mapsto &
\xymatrix@C-30pt{(\circ&\circ)&(\circ&\circ&\circ)&(\bullet&\bullet)} & i = \mathtt{d}\\
\end{array}
\]
In the sequel, the category of segments $\mathbf{Seg}(\Omega^{\times 4})$ will be used as a logic to reason about our main example presented in section \ref{ssec:Main_example}.
\end{example}

\subsection{Pre-orders on homologous segments}
Let $(\Omega,\preceq)$ be a pre-ordered set and let $t:[n_1] \to [n_0]$ be an order-preserving surjection. The subcategory of $\mathbf{Seg}(\Omega)$ whose objects are the homologous segments of topology $t$ and whose arrows are the morphisms of segments for which the components $f_0$ and $f_1$ are identities will be denoted by $\mathbf{Seg}(\Omega:t)$
and referred to as the \emph{category of homologous segments (over $\Omega$) of topology $t$}.

\begin{proposition}[Pre-order category]
For every order-preserving surjection $t:[n_1] \to [n_0]$, the category $\mathbf{Seg}(\Omega:t)$ is a pre-order category.
\end{proposition}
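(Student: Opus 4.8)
The plan is to unwind the definition of the subcategory $\mathbf{Seg}(\Omega:t)$ and to observe that its morphisms carry no data beyond their source and target, so that there is at most one arrow between any two objects — which is precisely the defining property of a pre-order category recalled in Remark on pre-order categories.

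First I would note that, by construction, every object of $\mathbf{Seg}(\Omega:t)$ is a segment of the fixed topology $t:[n_1]\to[n_0]$, hence is entirely determined by its colour map $c:[n_0]\to\Omega$; and every arrow is a morphism of segments $(f_1,f_0)$ whose two components are forced to be the identities $\mathrm{id}_{[n_1]}$ and $\mathrm{id}_{[n_0]}$. Consequently, an arrow from $(t,c)$ to $(t,c')$ consists of no data at all other than this pair of identities, so any two parallel arrows in $\mathbf{Seg}(\Omega:t)$ are necessarily equal. This already establishes that there is at most one arrow between any pair of objects, which is the whole content of the statement.

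To make the induced pre-order explicit, I would then identify exactly when such a single arrow exists. The commuting-square condition defining a morphism of segments holds trivially when $f_1$ and $f_0$ are identities, so the only remaining constraint is the inequality $c'\circ f_0(i)\preceq c(i)$ for all $i\in[n_0]$, which specializes to $c'(i)\preceq c(i)$ for every $i$. Thus there is an arrow $(t,c)\to(t,c')$ in $\mathbf{Seg}(\Omega:t)$ if and only if $c'(i)\preceq c(i)$ for all $i\in[n_0]$; the relation so obtained on objects is the pointwise pre-order induced by $\preceq$, whose reflexivity and transitivity are inherited directly from those of $\preceq$.

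I do not expect any genuine obstacle here: the content of the statement is entirely bookkeeping about which data a morphism of the subcategory retains. The only point deserving a moment's care is to confirm that restricting to identity components is compatible with composition and identities, so that $\mathbf{Seg}(\Omega:t)$ really is a subcategory of $\mathbf{Seg}(\Omega)$; but this is immediate, since the identity morphism of any segment has identity components and the component-wise composite of two identity-component morphisms again has identity components.
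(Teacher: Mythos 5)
Your proposal is correct and follows essentially the same route as the paper's own proof: both observe that an arrow $(t,c)\to(t,c')$ in $\mathbf{Seg}(\Omega:t)$ has identity components by definition and hence carries no data beyond the pointwise condition $c'(i)\preceq c(i)$ for all $i\in[n_0]$, so there is at most one arrow between any two objects and the resulting relation is reflexive and transitive. Your version merely spells out the bookkeeping (existence criterion, compatibility with composition) in more detail than the paper does.
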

\begin{proof}
According to section \ref{ssec:Morphisms_of_chromosomal_patches} and the definition of $\mathbf{Seg}(\Omega:t)$, giving an arrow $(t,c) \to (t,c')$ in $\mathbf{Seg}(\Omega:t)$ amounts to giving a pre-order relation $c'(i) \preceq c(i)$ in $(\Omega,\preceq)$ for every $i \in [n_0]$. It is straightforward to see that this defines a reflexive and transitive binary relation.
\end{proof}

\subsection{Pre-orders on quasi-homologous segments}
Let $(\Omega,\preceq)$ be a pre-ordered set and let $n_1$ be a non-negative integer. The subcategory of $\mathbf{Seg}(\Omega)$ whose objects are the quasi-homologous segments of domain $[n_1]$ and whose arrows are the morphisms of segments for which the component $f_1$ is an identity will be denoted by $\mathbf{Seg}(\Omega\,|\,n_1)$ and called the \emph{category of quasi-homologous segments (over $\Omega$) of domain $n_1$}.

\begin{proposition}[Pre-order category]\label{prop:Seg_Omega_n_porder}
For every non-negative integer $n_1$, the category of quasi-homologous segments $\mathbf{Seg}(\Omega\,|\,n_1)$ is a pre-order category.
\end{proposition}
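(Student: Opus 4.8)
The plan is to show that $\mathbf{Seg}(\Omega\,|\,n_1)$ has at most one arrow between any pair of objects; since it is by construction a subcategory of $\mathbf{Seg}(\Omega)$, it already has identities and composites, so this single ``at most one arrow'' property is all that distinguishes a pre-order category from an ordinary one.

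First I would unpack what a morphism in $\mathbf{Seg}(\Omega\,|\,n_1)$ amounts to. Fix two objects $(t,c):[n_1] \multimap [n_0]$ and $(t',c'):[n_1] \multimap [n_0']$ of common domain $[n_1]$. A morphism between them is a pair $(f_1,f_0)$ as in section \ref{ssec:Morphisms_of_chromosomal_patches}, but subject to the constraint that $f_1$ is the identity on $[n_1]$. Thus the only genuinely free datum is the order-preserving function $f_0:[n_0] \to [n_0']$, and the commuting square degenerates to the equation $f_0 \circ t = t' \circ f_1 = t'$ (together with the color inequality $c' \circ f_0 \preceq c$).

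The key observation, and really the whole content of the statement, is that $f_0$ is then uniquely determined. Since $t:[n_1] \to [n_0]$ is a surjection, it is an epimorphism in $\mathbf{Set}$; hence two functions $f_0, g_0:[n_0] \to [n_0']$ both satisfying $f_0 \circ t = t' = g_0 \circ t$ must coincide. Consequently there is at most one $f_0$ compatible with the topologies, and \emph{a fortiori} at most one morphism $(t,c) \to (t',c')$ in $\mathbf{Seg}(\Omega\,|\,n_1)$, which is exactly the pre-order property. Concretely, uniqueness can also be exhibited pointwise: for $j \in [n_0]$ choose any $i \in [n_1]$ with $t(i) = j$, and then the value $f_0(j) = t'(i)$ is forced.

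There is no serious obstacle here; the one point worth flagging is conceptual rather than technical. The color inequality $c' \circ f_0 \preceq c$ and the order-preservation of $f_0$ play no role whatsoever in the uniqueness argument — they only constrain whether an arrow \emph{exists}. What does the real work is the surjectivity built into the very definition of a topology (see section \ref{ssec:Segments}), which is precisely the epimorphism property needed to cancel $t$ on the right. This is dual in spirit to the proof for $\mathbf{Seg}(\Omega:t)$, where instead $f_0$ and $f_1$ are both forced to be identities and only the pointwise color relation $c' \preceq c$ survives to encode the unique possible arrow.
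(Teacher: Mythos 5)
Your proposal is correct and uses exactly the paper's argument: with $f_1$ forced to be the identity, the commuting square gives $f_0 \circ t = t'$, and the surjectivity of the topology $t$ (hence its being an epimorphism) forces $f_0$ to be unique. The extra remarks (the pointwise determination of $f_0$ and the observation that the color inequality only governs existence, not uniqueness) are accurate but not needed beyond what the paper already does.
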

\begin{proof}
Let $(\mathrm{id},f_0):(t,c) \to (t',c')$ and $(\mathrm{id},g_0):(t,c) \to (t',c')$ be two morphisms in $\mathbf{Seg}(\Omega\,|\,n_1)$. We want to show that these morphisms are equal. According section \ref{ssec:Morphisms_of_chromosomal_patches} and the definition of $\mathbf{Seg}(\Omega\,|\,n_1)$, the two identities $f_0 \circ t = t'$ and $g_0 \circ t = t'$ hold, which implies that the identity $g_0 \circ t = f_0 \circ t$ holds. Because $t$ is an epimorphism, the identity $g_0=f_0$ must hold.
\end{proof}

\begin{remark}[Zero domain]
The category $\mathbf{Seg}(\Omega\,|\,0)$ of quasi-homologous segments with empty domain $[0]$ is a terminal category whose only object is the initial object of $\mathbf{Seg}(\Omega)$.
\end{remark}

\subsection{Cones}\label{ssec:cones}
Recall that a \emph{cone} in a category $\mathcal{C}$ consists of an object $X$ in $\mathcal{C}$, a small category $A$, a functor $F:A \to \mathcal{C}$ and a natural transformation $\Delta_{A}(X) \Rightarrow F$ where $\Delta_{A}(X)$ denotes the constant functor $A \to \mathbf{1} \to \mathcal{C}$ mapping every object in $A$ to the object $X$ in $\mathcal{C}$.

\begin{definition}[Wide spans]\label{def:wide_spans}
In the sequel, we shall speak of a \emph{wide span} to refer to a cone $\Delta_{A}(X) \Rightarrow F$ defined over a finite discrete small category $A$ whose objects are ordered with respect to a total order (this will allow us to have canonical choices of limit constructions).
\end{definition}

\begin{example}[Wide spans]\label{exa:wide_spans}
Giving a wide span in a category $\mathcal{C}$ amounts to giving a finite collection of arrows $\mathbf{S} :=\{f_i:X \to F_i\}_{i \in [n]}$ in $\mathcal{C}$. When the category $\mathcal{C}$ has products, the implicit order of the set $[n]=\{1,\dots,n\}$ can be used to give a specific representative to the product of the collection $\{F_i\}_{i \in [n]}$ in $\mathcal{C}$.
\end{example}

\subsection{Chromologies}\label{ssec:chromologies}
A \emph{chromology} is a pre-ordered set $(\Omega,\preceq)$ that is equipped, for every non-negative integer $n$, with a set $D[n]$ of cones in the category $\mathbf{Seg}(\Omega\,|\,n)$. Such a chromology will later be denoted as a pair $(\Omega,D)$.

\begin{remark}[Future examples]
In section \ref{ssec:Distributive_and_exactly_distributive_chromologies}, we will see several examples of chromologies, which will be used throughout this article.
\end{remark}

\subsection{Logical systems}\label{ssec:Logical_systems}
By a \emph{logical system}, we mean a category $\mathcal{C}$ that is equipped with a subclass of its cones $\mathcal{W}$ (see section \ref{ssec:cones}).

\begin{remark}[Size matters]
The only difference between a logical system and a limit sketch is the sizes of their collections of objects: that of the latter is a set while that of the former is a class. This does make a difference in the type of properties that the two definitions satisfy. Because of their sizes, logical systems will only be used as codomains of functors. On the other hand, a chromology, which is a limit sketch, will often turn out to be the domain of a functor.
\end{remark}

\subsection{Pedigrads} \label{ssec:Pedigrads}
Pedigrads are algebraic structures that model the logical rules of chromologies.  Their name refers to the concept of `pedigree' used in genetics to draw the genealogy of a set of taxa. Let $(\Omega,D)$ be a chromology and $(\mathcal{C},\mathcal{W})$ be a logical system. A \emph{pedigrad in $(\mathcal{C},\mathcal{W})$ for $(\Omega,D)$} is a functor $\mathbf{Seg}(\Omega) \to \mathcal{C}$ sending, for every non-negative integer $n$, the cones in $D[n]$ to cones in $\mathcal{W}$. 

\begin{convention}[$\mathcal{W}$-pedigrads]
Because we will often consider the same category $\mathcal{C}$ for different classes of cones $\mathcal{W}$, we will often refer to a pedigrad in $(\mathcal{C},\mathcal{W})$ as a $\mathcal{W}$-pedigrad.
\end{convention}

\section{Examples of pedigrads in sets}\label{sec:Examples_of_pedigrads_in_sets}

The goal of the present section is to formalize the concept of sequence alignment in terms of a functor on a subcategory of segments and to show that the right Kan extension of this functor on the whole category of segments computes what can be seen as multiple sequence alignments. The consistency of the integrated data is then studied by using the concepts of pedigrad and chromology. 

\subsection{Truncation functors}
In this section, we define a truncation operation (Definition \ref{def:truncation}) and show that this operation is a functor on a category of quasi-homologous segments (Proposition \ref{prop:Tr_functor_Set_op}). Extending this functoriality property to the whole category of segments is not straightforward and requires a few more steps (see Proposition \ref{prop:Tr_functor_pointed_Set_op}). In section \ref{sec:Example_pedigrads_in_sets}, we use the resulting functor to construct pedigrads.

\begin{definition}[Truncation]\label{def:truncation}
For every segment $(t,c):[n_1] \multimap [n_0]$ over $\Omega$ and element $b \in \Omega$, we will denote by $\mathsf{Tr}_b(t,c)$ the subset $\{i \in [n_1]~|~ b \preceq c \circ t(i)\}$ of $[n_1]$. This is the set of all elements in $[n_1]$ whose images via $c \circ t$ are greater than or equal to $b$ in $\Omega$.
\end{definition}

\begin{example}[Truncation]\label{exa:truncation}
Let $(\Omega,\preceq)$ be the Boolean pre-ordered set $\{0 \leq 1\}$. If we consider the segment $(t,c)$ of $\mathbf{Seg}(\Omega)$ given below on the left, then the operation $\mathsf{Tr}_b$ for which $b$ is taken to be equal to $1$ will only select the integers in the domain of $(t,c)$ that are associated with black nodes. On the other hand, the operation $\mathsf{Tr}_b$ for which $b$ is taken to be equal to $0$ will select all the integers in the domain of $(t,c)$.
\[
(t,c)=\xymatrix@C-30pt{
(\bullet&\bullet&\bullet)&(\circ&\circ)&(\bullet&\bullet&\bullet&\bullet)&(\circ&\circ&\circ&\circ&\circ)&(\bullet&\bullet&\bullet)&(\circ)
}
\quad\quad
\begin{array}{l}
\mathsf{Tr}_1(t,c)=\{1,2,3,6,7,8,9,15,16,17\}\\
\mathsf{Tr}_0(t,c)=[18]
\end{array}
\]

Similarly, if we let $(\Omega,\preceq)$ denote the pre-ordered set $\{0 \leq 1 \leq 2\}$ (used in Remark \ref{rem:resolving_inconsistencies}), then we obtain the following truncations for the segment $(t,c)$ given below, on the left.
\[
(t,c)=\xymatrix@C-30pt{
(\mathtt{1}&\mathtt{1}&\mathtt{1})&(\mathtt{0}&\mathtt{0})&(\mathtt{2}&\mathtt{2}&\mathtt{2}&\mathtt{2})&(\mathtt{0}&\mathtt{0}&\mathtt{0}&\mathtt{0}&\mathtt{0})&(\mathtt{1}&\mathtt{1}&\mathtt{1})&(\mathtt{0})
}
\quad\quad
\begin{array}{l}
\mathsf{Tr}_2(t,c)=\{6,7,8,9\}\\
\mathsf{Tr}_1(t,c)=\{1,2,3,6,7,8,9,15,16,17\}\\
\mathsf{Tr}_0(t,c)=[18]
\end{array}
\]
Here, the reader may have noticed that any relation of the form $b \preceq b'$ will lead to an inclusion of the form $\mathsf{Tr}_{b'}(t,c) \subseteq \mathsf{Tr}_{b}(t,c)$. This property, even though interesting, is not used in this paper. 
\end{example}

\begin{definition}[Sub-objects]
For every non-negative integer $n$, we will speak of a \emph{sub-object} of $[n]$ to refer to a subset of $[n]$. A \emph{morphism of sub-objects of $[n]$} is an inclusion of sets between the two sub-objects.
\end{definition}

\begin{example}[Truncation operations and sub-objects]
Let $(\Omega,\preceq)$ be the Boolean pre-ordered set $\{0 \leq 1\}$. If we consider the morphism of segments that is given in Example \ref{exa:Relativity_morphism}, which we recall below, on the left-hand side, we can see that the truncation operation $\mathsf{Tr}_1$ gives, on the right, two sub-objects of the domain $[18]$ that can be related via a morphism of sub-objects.
\[
\begin{array}{crc}
(t,c)=\xymatrix@C-30pt{
(\bullet&\bullet&\bullet)&(\circ&\circ)&(\bullet&\bullet&\bullet&\bullet)&(\bullet&\bullet&\bullet&\bullet&\bullet)&(\circ&\circ&\circ)&(\circ)
}&&\mathsf{Tr}_1(t,c)=\{1,2,3,6,7,8,9,10,11,12,13,14\}\\
\rotatebox[origin=c]{-90}{$\longrightarrow$} &&\rotatebox[origin=c]{-90}{$\supseteq$}\\
(t',c')=\xymatrix@C-30pt{
(\circ&\circ&\circ&\circ&\circ)&(\bullet&\bullet&\bullet&\bullet&\bullet&\bullet&\bullet&\bullet&\bullet)&(\circ&\circ&\circ&\circ)
}&&\mathsf{Tr}_1(t',c')=\{6,7,8,9,10,11,12,13,14\}
\end{array}
\]
The fact that a morphism of segments of the form $(t,c) \to (t',c')$ gives rise to an inclusion $\mathsf{Tr}_1(t',c') \subseteq \mathsf{Tr}_1(t,c)$ is explained by Lemma \ref{lem:preserve_Tr_b_morphism}.
\end{example}

\begin{lemma}\label{lem:preserve_Tr_b_morphism}
Let $(f_1,f_0):(t,c) \to (t',c')$ be a morphism in $\mathbf{Seg}(\Omega)$. If the relation $f_1(i) \in \mathsf{Tr}_b(t',c')$ holds, then so does the relation $i \in \mathsf{Tr}_b(t,c)$. 
\end{lemma}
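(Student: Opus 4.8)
The plan is to unfold both membership conditions through Definition \ref{def:truncation} and then chain the two pieces of structure carried by a morphism of segments: the commutativity of the defining square and the color inequality $c' \circ f_0 \preceq c$. Since $i$ is an element of the domain $[n_1]$ of $(t,c)$, the hypothesis $f_1(i) \in \mathsf{Tr}_b(t',c')$ reads $b \preceq c' \circ t'(f_1(i))$, while the desired conclusion $i \in \mathsf{Tr}_b(t,c)$ reads $b \preceq c \circ t(i)$. Thus the whole task reduces to comparing $c' \circ t' \circ f_1$ with $c \circ t$ at the element $i$.

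First I would use the commutativity of the square that is part of the morphism data, namely $t' \circ f_1 = f_0 \circ t$, to rewrite $t'(f_1(i))$ as $f_0(t(i))$; composing with $c'$ gives $c' \circ t'(f_1(i)) = (c' \circ f_0)(t(i))$. Next I would invoke the color inequality $c' \circ f_0(j) \preceq c(j)$, which holds for every $j \in [n_0]$ by the definition of a morphism of segments, and apply it at the particular element $j = t(i) \in [n_0]$, obtaining $(c' \circ f_0)(t(i)) \preceq c(t(i)) = c \circ t(i)$. Combining the hypothesis with these two observations yields
\[
b \preceq c' \circ t'(f_1(i)) = (c' \circ f_0)(t(i)) \preceq c \circ t(i),
\]
so that transitivity of $\preceq$ gives $b \preceq c \circ t(i)$, i.e. $i \in \mathsf{Tr}_b(t,c)$.

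The argument presents no genuine obstacle; it is a direct diagram chase, and indeed the statement is essentially the functoriality expressing that $\mathsf{Tr}_b$ behaves contravariantly on subobjects. The only point requiring care is the bookkeeping of the underlying sets: the elements $i$ and $f_1(i)$ live in the domains $[n_1]$ and $[n_1']$, whereas the color inequality is indexed by the codomains $[n_0]$ and $[n_0']$ and must be evaluated at the specific element $t(i)$. Keeping this index straight is exactly what makes the commutativity relation and the color inequality dovetail, and it is the one place where a careless chase could go wrong.
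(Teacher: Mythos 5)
Your argument is correct and coincides with the paper's own proof: both unfold the membership conditions, use the commutativity $t' \circ f_1 = f_0 \circ t$ together with the color inequality $c' \circ f_0 \preceq c$ evaluated at $t(i)$, and conclude by transitivity of $\preceq$. No issues.
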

\begin{proof}
Recall that, by definition of a morphism in $\mathbf{Seg}(\Omega)$, the inequality $c' \circ f_0 \preceq c$ holds. Now, if the relation $f_1(i) \in \mathsf{Tr}_b(t',c')$ holds, then so do the following pre-order relations.
\[
b \preceq c' \circ t'  \circ f_1(i) = c' \circ f_0 \circ  t (i) \preceq  c \circ  t (i)
\]
By transitivity, we obtain the inequality $b \preceq c \circ  t (i)$, which implies that $i$ must be in $\mathsf{Tr}_b(t,c)$.
\end{proof}

The following proposition only shows one side of the functorial properties satisfied by the truncation operation. Proposition \ref{prop:Tr_functor_pointed_Set_op} will give a different functor construction, which is related to that given in Proposition \ref{prop:Tr_functor_Set_op} via the statement of Proposition \ref{prop:Tr_ast_on_Chr_Omega_t_equals_F_Tr}. While Proposition \ref{prop:Tr_functor_pointed_Set_op} will later be used to construct pedigrads, Proposition \ref{prop:Tr_functor_Set_op} will be used to deduce properties related to them.

\begin{proposition}\label{prop:Tr_functor_Set_op}
For every element $b \in \Omega$ and non-negative integer $n_1$, the mapping $(t,c) \mapsto \mathsf{Tr}_b(t,c)$ extends to a functor
$\mathsf{Tr}_b:\mathbf{Seg}(\Omega\,|\,n_1) \to \mathbf{Set}^{\mathrm{op}}$, which factorizes through the opposite category of sub-objects of $[n_1]$.
\end{proposition}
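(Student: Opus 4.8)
The plan is to exploit the fact, established in Proposition \ref{prop:Seg_Omega_n_porder}, that $\mathbf{Seg}(\Omega\,|\,n_1)$ is a pre-order category, together with the observation that the intended codomain is one as well. Write $\mathrm{Sub}([n_1])$ for the poset of sub-objects of $[n_1]$ ordered by inclusion; this is a partial order, hence a pre-order category, and so is its opposite $\mathrm{Sub}([n_1])^{\mathrm{op}}$. A functor between pre-order categories is nothing more than an order-preserving map on objects, and once such a map is exhibited the identity and composition axioms hold automatically, since there is at most one arrow between any two objects. Thus the entire content of the statement reduces to (i) checking that $(t,c) \mapsto \mathsf{Tr}_b(t,c)$ lands in $\mathrm{Sub}([n_1])$ and (ii) checking the correct monotonicity.

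Point (i) is immediate from Definition \ref{def:truncation}: by construction $\mathsf{Tr}_b(t,c) = \{i \in [n_1] \mid b \preceq c \circ t(i)\}$ is a subset of $[n_1]$, that is, a sub-object of $[n_1]$.

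For point (ii), I would take an arrow $(t,c) \to (t',c')$ in $\mathbf{Seg}(\Omega\,|\,n_1)$. By definition of this category its first component is the identity $f_1 = \mathrm{id}_{[n_1]}$, so I may feed this arrow into Lemma \ref{lem:preserve_Tr_b_morphism}: for every $i$, the relation $i = f_1(i) \in \mathsf{Tr}_b(t',c')$ forces $i \in \mathsf{Tr}_b(t,c)$, which is exactly the inclusion $\mathsf{Tr}_b(t',c') \subseteq \mathsf{Tr}_b(t,c)$. Reading this inclusion in $\mathrm{Sub}([n_1])^{\mathrm{op}}$, it is a morphism $\mathsf{Tr}_b(t,c) \to \mathsf{Tr}_b(t',c')$; in other words, $\mathsf{Tr}_b$ sends the arrow $(t,c) \to (t',c')$ to an arrow with the same source-target pattern in the opposite poset, so the object map is order-preserving as a map into $\mathrm{Sub}([n_1])^{\mathrm{op}}$.

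Combining (i) and (ii), the assignment is a well-defined functor $\mathsf{Tr}_b : \mathbf{Seg}(\Omega\,|\,n_1) \to \mathrm{Sub}([n_1])^{\mathrm{op}}$; postcomposing with the evident faithful inclusion $\mathrm{Sub}([n_1])^{\mathrm{op}} \hookrightarrow \mathbf{Set}^{\mathrm{op}}$ yields the functor to $\mathbf{Set}^{\mathrm{op}}$ and simultaneously records the claimed factorization. I do not expect any genuine obstacle here, since the real work has already been done in Lemma \ref{lem:preserve_Tr_b_morphism}. The only subtlety worth flagging is the \emph{variance}: an arrow of quasi-homologous segments produces a \emph{reverse} inclusion of truncations, which is precisely why the target must be taken to be an opposite category; and no separate verification of the functor axioms is required because both categories in question are pre-order categories.
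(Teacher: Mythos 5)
Your proposal is correct and follows essentially the same route as the paper's own proof: both reduce the functor axioms to the observation that the (opposite) poset of sub-objects of $[n_1]$ is a pre-order category, and both obtain the required reverse inclusion $\mathsf{Tr}_b(t',c') \subseteq \mathsf{Tr}_b(t,c)$ by applying Lemma \ref{lem:preserve_Tr_b_morphism} with $f_1 = \mathrm{id}$. Your extra remarks on variance and on the factorization through $\mathrm{Sub}([n_1])^{\mathrm{op}}$ only make explicit what the paper leaves implicit.
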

\begin{proof}
By definition, for every segment $(t,c)$ in $\mathbf{Seg}(\Omega\,|\,n_1)$, the set $\mathsf{Tr}_b(t,c)$ is a subset of $[n_1]$. For every morphism $(\mathrm{id},f_0):(t,c) \to (t,c')$ in $\mathbf{Seg}(\Omega\,|\,n_1)$, Lemma \ref{lem:preserve_Tr_b_morphism} shows that there is an inclusion
$\mathsf{Tr}_b(t,c') \subseteq \mathsf{Tr}_b(t,c)$. Since the opposite category of sub-objects of $[n_1]$ is a pre-order category, the functor structure is obvious and the statement follows. 
\end{proof}

The extension of the functorial property given in Proposition \ref{prop:Tr_functor_Set_op} to the whole category of segments requires to change the codomain category $\mathbf{Set}$ to the category $\mathbf{Set}_{\ast}$ of pointed sets and point-preserving maps (see Example \ref{exa:explain_pointed_maps}). In this respect, recall that there is an adjunction
\[
\xymatrix{
\mathbf{Set} \ar@<+1.2ex>[r]^{\mathbb{F}} \ar@{}[r]|{\bot}\ar@<-1.2ex>@{<-}[r]_{\mathbb{U}} & \mathbf{Set}_{\ast}
}
\]
whose right adjoint $\mathbb{U}:\mathbf{Set}_{\ast} \to \mathbf{Set}$ forgets the pointed structure (\emph{i.e.} $\mathbb{U}:(X,p) \mapsto X$) and whose left adjoint 
$\mathbb{F}:\mathbf{Set} \to \mathbf{Set}_{\ast}$ maps a set $X$ to the obvious pointed set $(X+\{\ast\},\ast)$ and maps a function $f:X \to Y$ to the coproduct map $f+\{\ast\}:X+\{\ast\} \to Y+\{\ast\}$.

\begin{proposition}\label{prop:Tr_functor_pointed_Set_op}
For every element $b \in \Omega$, the mapping $(t,c) \mapsto \mathbb{F}\mathsf{Tr}_b(t,c)$ extends to a functor $\mathsf{Tr}_b^{\ast}:\mathbf{Seg}(\Omega) \to \mathbf{Set}_{\ast}^{\mathrm{op}}$ mapping every function $(f_1,f_0):(t,c) \to (t',c')$ in $\mathbf{Seg}(\Omega)$ to the following map of pointed sets.
\[
\begin{array}{llllll}
\mathsf{Tr}_b^{\ast}(f_1,f_0)&:&\mathbb{F}\mathsf{Tr}_b(t',c')&\to&\mathbb{F}\mathsf{Tr}_b(t,c)&\\
&&j&\mapsto & i &\textrm{if } \exists i \in \mathsf{Tr}_b(t,c) \,: \, j = f_1(i);\\
&&j&\mapsto & \ast &\textrm{otherwise.}\\
\end{array}
\]
\end{proposition}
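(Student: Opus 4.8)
The plan is to recognize the proposed assignment as the \emph{partial inverse} of the injection $f_1$, packaged as a pointed map, and then to check in turn that it is well defined, point-preserving, and functorial. The only substantive input beyond bookkeeping is Lemma \ref{lem:preserve_Tr_b_morphism}, which guarantees that an $f_1$-preimage of a truncation element is again a truncation element; this is precisely what lets us replace the codomain $\mathbf{Set}^{\mathrm{op}}$ of Proposition \ref{prop:Tr_functor_Set_op} (which worked only when $f_1$ was forced to be an identity, so that truncation sets were related by mere inclusion) by $\mathbf{Set}_{\ast}^{\mathrm{op}}$, the formally adjoined basepoint recording exactly those elements of $\mathsf{Tr}_b(t',c')$ that fall outside the image of $f_1$.

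First I would check that $\mathsf{Tr}_b^{\ast}(f_1,f_0)$ is a well-defined point-preserving function. For $j \in \mathsf{Tr}_b(t',c')$, injectivity of $f_1$ gives at most one $i \in [n_1]$ with $j = f_1(i)$; and whenever such $i$ exists, Lemma \ref{lem:preserve_Tr_b_morphism} (applied to $f_1(i) = j \in \mathsf{Tr}_b(t',c')$) forces $i \in \mathsf{Tr}_b(t,c)$. Hence the recipe ``$j \mapsto i$ if $\exists\, i \in \mathsf{Tr}_b(t,c)$ with $j = f_1(i)$'' is unambiguous and coincides with ``$j \mapsto$ the unique $f_1$-preimage of $j$, when it exists''. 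The resulting value lies in $\mathsf{Tr}_b(t,c) \subseteq \mathbb{F}\mathsf{Tr}_b(t,c)$, and since the adjoined point $\ast$ is never of the form $f_1(i)$, the basepoint is sent to the basepoint. Thus the map is a morphism of $\mathbf{Set}_{\ast}$ which, after the $\mathrm{op}$ reversal, is the required morphism of $\mathbf{Set}_{\ast}^{\mathrm{op}}$ from $\mathbb{F}\mathsf{Tr}_b(t,c)$ to $\mathbb{F}\mathsf{Tr}_b(t',c')$.

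Next I would verify functoriality. Identities are immediate: for $(\mathrm{id},\mathrm{id})$ the unique preimage of $j \in \mathsf{Tr}_b(t,c)$ is $j$ itself, so $\mathsf{Tr}_b^{\ast}(\mathrm{id},\mathrm{id})$ is the identity of $\mathbb{F}\mathsf{Tr}_b(t,c)$. For composition, let $(f_1,f_0):(t,c)\to(t',c')$ and $(g_1,g_0):(t',c')\to(t'',c'')$, with composite $(g_1\circ f_1,\, g_0\circ f_0)$ in $\mathbf{Seg}(\Omega)$; because of the $\mathrm{op}$, I must show that the single function $\mathsf{Tr}_b^{\ast}(g_1\circ f_1, g_0\circ f_0)$ equals the composite $\mathsf{Tr}_b^{\ast}(f_1,f_0)\circ \mathsf{Tr}_b^{\ast}(g_1,g_0)$ as maps $\mathbb{F}\mathsf{Tr}_b(t'',c'') \to \mathbb{F}\mathsf{Tr}_b(t,c)$. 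Evaluating both on $k \in \mathbb{F}\mathsf{Tr}_b(t'',c'')$, the basepoint is trivial, so take $k \in \mathsf{Tr}_b(t'',c'')$. Using the partial-inverse description and injectivity of $g_1$, $f_1$ and their composite, the argument reduces to the elementary fact that $k$ lies in the image of $g_1\circ f_1$ if and only if $k = g_1(j)$ for some $j$ in the image of $f_1$; here $j \in \mathsf{Tr}_b(t',c')$ by Lemma \ref{lem:preserve_Tr_b_morphism} applied to $g$, and the subsequent preimage $i$ lies in $\mathsf{Tr}_b(t,c)$ by the Lemma applied to $f$ (equivalently, applied directly to $g\circ f$). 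In that case the iterated preimage produced by the right-hand composite agrees with the direct $(g_1\circ f_1)$-preimage produced by the left-hand side; in every remaining case at least one preimage search fails and both sides return $\ast$.

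The main obstacle is not any single computation but the careful coordination of the two conventions at play: the contravariant ($\mathrm{op}$) direction, which reverses the order of composition in $\mathbf{Set}_{\ast}$, and the distinction between ``$k$ belongs to an image'' and ``$k$ belongs to a truncation set''. Lemma \ref{lem:preserve_Tr_b_morphism} is exactly the bridge between these two notions, and the proof amounts to invoking it consistently so that each partial inverse is genuinely valued in the truncation set it is meant to target.
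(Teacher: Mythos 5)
Your proposal is correct and follows essentially the same route as the paper's proof: well-definedness via Lemma \ref{lem:preserve_Tr_b_morphism}, the identity axiom as immediate, and the composition axiom reduced to the fact that the $(g_1\circ f_1)$-preimage of $k$ is the $f_1$-preimage of the $g_1$-preimage, with the Lemma guaranteeing that each intermediate preimage lands in the appropriate truncation set. Your write-up is somewhat more explicit than the paper's (which compresses the composition step into the set identity $(f_1'\circ f_1)^{-1}(j)=(f_1)^{-1}((f_1')^{-1}(j))$), but there is no substantive difference in method.
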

\begin{proof}

The well-definedness of the point-preserving map $\mathsf{Tr}_b^{\ast}(f_1,f_0)$ follows from Lemma \ref{lem:preserve_Tr_b_morphism}. The mapping $\mathsf{Tr}_b^{\ast}$ obviously satisfies the identity axiom associated with the concept of a functor. The composition axiom is shown as follows. Take two morphisms $(f_1,f_0):(t,c) \to (t',c')$ and $(f_1',f_0'):(t',c') \to (t^{\prime\prime},c^{\prime\prime})$ in $\mathbf{Seg}(\Omega)$. The image of the composition $(f_1',f_0') \circ (f_1,f_0)$ via the operation $\mathsf{Tr}_b^{\ast}$ takes the following form.
\[
\begin{array}{llll}
j&\mapsto & (f_1' \circ f_1)^{-1}(j)& \textrm{if } (f_1' \circ f_1)^{-1}(j) \neq \emptyset\\
j&\mapsto & \ast &\textrm{otherwise.}\\
\end{array}
\]
Since the identity $(f_1' \circ f_1)^{-1}(j) = (f_1)^{-1}((f_1')^{-1}(j))$ holds whenever the inequality $(f_1')^{-1}(j) \neq \emptyset$ is satisfied, we deduce that the following equation holds.
\[
\mathsf{Tr}_b^{\ast}((f_1',f_0') \circ (f_1,f_0)) = \mathsf{Tr}_b^{\ast}(f_1,f_0) \circ \mathsf{Tr}_b^{\ast}(f_1',f_0')
\]
This last equation shows that $\mathsf{Tr}_b^{\ast}$ is a functor going to the opposite category of $\mathbf{Set}_{\ast}$.
\end{proof}

\begin{example}[Truncation operations and pointed sets]\label{exa:explain_pointed_maps}
Let $(\Omega,\preceq)$ be the Boolean pre-ordered set $\{0 \leq 1\}$. 
Consider the morphism of segments of Example \ref{exa:Flexibility_morphism}, given below, on the left. Its mapping has further been detailed by using adequate labeling to show how the first segment is mapped to the second one.
On the right, we can see its image via the truncation operation $\mathsf{Tr}_1$ where we see that the indices 4 and 5, in the truncated codomain, do not have corresponding indices in the truncated domain.
\[
\begin{array}{cccc}
\xymatrix@C-30pt{
(\mathop{\bullet}\limits^1&\mathop{\bullet}\limits^2&\mathop{\bullet}\limits^3)&(\mathop{\circ}\limits^4&\mathop{\circ}\limits^5)&(\mathop{\bullet}\limits^6&\mathop{\bullet}\limits^7&\mathop{\bullet}\limits^8&\mathop{\bullet}\limits^9)&(\bullet&\bullet&\bullet&\bullet&\bullet)&(\circ&\circ&\circ)&(\circ)
}&\{1,2,3,6,7,8,9,10,11,12,13,14\}&&?\\
\rotatebox[origin=c]{-90}{$\longrightarrow$} &\rotatebox[origin=c]{90}{$\longrightarrow$}&&\rotatebox[origin=c]{90}{$\longrightarrow$}\\
\xymatrix@C-30pt{
(\mathop{\bullet}\limits^1&\mathop{\bullet}\limits^2&\mathop{\bullet}\limits^3&\mathop{\bullet}\limits^{\ast})&(\mathop{\bullet}\limits^{\ast})&(\mathop{\circ}\limits^4&\mathop{\circ}\limits^5&\mathop{\circ}\limits^{\ast}&\mathop{\circ}\limits^{\ast})&(\mathop{\bullet}\limits^6&\mathop{\bullet}\limits^7&\mathop{\bullet}\limits^8&\mathop{\bullet}\limits^9)&(\bullet&\bullet&\bullet&\bullet&\bullet)&(\circ&\circ&\circ)&(\circ)
}&\{1,2,3,10,11,12,\dots,16,17,18\}&\cup&\{4,5\}
\end{array}
\]
A way to associate the indices $4$ and $5$ with an element in the truncated domain is to formally add one, thus explaining the pointed structure used in Proposition \ref{prop:Tr_functor_pointed_Set_op}.
\end{example}

\begin{proposition}\label{prop:Tr_ast_on_Chr_Omega_t_equals_F_Tr}
For every element $b \in \Omega$ and non-negative integer $n_1$, the following diagram commutes.
\[
\xymatrix@C-8pt{
*+!R(.4){\mathbf{Seg}(\Omega\,|\,n_1)}\ar[d]_{\mathsf{Tr}_b}\ar[r]^{\subseteq}&*+!L(.4){\mathbf{Seg}(\Omega)}\ar[d]^{\mathsf{Tr}_b^{\ast}}\\
\mathbf{Set}^{\mathrm{op}}\ar[r]_{\mathbb{F}^{\mathrm{op}}}&*+!L(.4){\mathbf{Set}_{\ast}^{\mathrm{op}}}
}
\]
\end{proposition}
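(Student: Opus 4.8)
The plan is to check that the two composite functors from $\mathbf{Seg}(\Omega\,|\,n_1)$ to $\mathbf{Set}_{\ast}^{\mathrm{op}}$ agree on objects and on morphisms; since the square carries no coherence data beyond equality of functors, a pointwise comparison suffices.

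First I would handle objects. For a segment $(t,c)$ in $\mathbf{Seg}(\Omega\,|\,n_1)$, the right-then-down composite leaves $(t,c)$ unchanged under the inclusion and then returns $\mathbb{F}\mathsf{Tr}_b(t,c)$ by Proposition \ref{prop:Tr_functor_pointed_Set_op}, while the down-then-right composite first produces the subset $\mathsf{Tr}_b(t,c) \subseteq [n_1]$ via Proposition \ref{prop:Tr_functor_Set_op} and then applies $\mathbb{F}$ through $\mathbb{F}^{\mathrm{op}}$, again yielding $\mathbb{F}\mathsf{Tr}_b(t,c)$. So the object parts coincide with no work.

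The morphism part is the substance, and I would argue as follows. Let $(\mathrm{id}, f_0):(t,c) \to (t',c')$ be an arrow of $\mathbf{Seg}(\Omega\,|\,n_1)$, whose first component is by definition the identity of $[n_1]$. Along the down-then-right route, $\mathsf{Tr}_b$ sends this arrow contravariantly to the set inclusion $\iota:\mathsf{Tr}_b(t',c') \hookrightarrow \mathsf{Tr}_b(t,c)$ of Proposition \ref{prop:Tr_functor_Set_op}, and $\mathbb{F}^{\mathrm{op}}$ then sends $\iota$ to the point-preserving map $\mathbb{F}(\iota)$ acting as $j \mapsto j$ on $\mathsf{Tr}_b(t',c')$ and fixing the adjoined basepoint $\ast$. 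Along the right-then-down route, the inclusion carries $(\mathrm{id}, f_0)$ to the same arrow of $\mathbf{Seg}(\Omega)$, and $\mathsf{Tr}_b^{\ast}$ sends it to the map of Proposition \ref{prop:Tr_functor_pointed_Set_op} specialized to $f_1 = \mathrm{id}$, namely $j \mapsto i$ whenever some $i \in \mathsf{Tr}_b(t,c)$ satisfies $j = \mathrm{id}(i) = i$, and $j \mapsto \ast$ otherwise. I would then compare the two descriptions directly.

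The one thing to check carefully — the only place where anything happens — is that the basepoint branch of $\mathsf{Tr}_b^{\ast}(\mathrm{id}, f_0)$ is never triggered on genuine elements, so that this map reduces to $j \mapsto j$ (with $\ast \mapsto \ast$) and hence equals $\mathbb{F}(\iota)$. This is exactly Lemma \ref{lem:preserve_Tr_b_morphism} read with $f_1 = \mathrm{id}$: any $j \in \mathsf{Tr}_b(t',c')$ is forced to lie in $\mathsf{Tr}_b(t,c)$, so the witness $i = j$ always exists. I do not anticipate a real obstacle; the content is precisely that restricting to arrows with identity first component collapses the partial, basepoint-valued definition of $\mathsf{Tr}_b^{\ast}$ onto the honest inclusion computed by $\mathsf{Tr}_b$. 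The only bookkeeping to keep straight is the variance — both vertical functors land in opposite categories, so subset inclusions are the arrows that appear — and the tracking of the adjoined basepoint, both of which are routine.
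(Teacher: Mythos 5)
Your proposal is correct and follows essentially the same route as the paper's proof: both reduce the claim to the observation that, on arrows of $\mathbf{Seg}(\Omega\,|\,n_1)$ (where $f_1$ is the identity), the map $\mathsf{Tr}_b^{\ast}(f_1,f_0)$ of Proposition \ref{prop:Tr_functor_pointed_Set_op} collapses to $j \mapsto j$, $\ast \mapsto \ast$, i.e.\ to $\mathbb{F}$ applied to the inclusion $\mathsf{Tr}_b(t,c') \subseteq \mathsf{Tr}_b(t,c)$. Your explicit appeal to Lemma \ref{lem:preserve_Tr_b_morphism} to rule out the basepoint branch is a point the paper leaves implicit, but it is the same argument.
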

\begin{proof}
By definition, if we restrict the functor $\mathsf{Tr}_b^{\ast}:\mathbf{Seg}(\Omega) \to \mathbf{Set}_{\ast}^{\mathrm{op}}$ to the subcategory $\mathbf{Seg}(\Omega\,|\,n_1) \hookrightarrow \mathbf{Seg}(\Omega)$, then every morphism $(t,c) \to (t,c')$ in the pre-order category $\mathbf{Seg}(\Omega\,|\,n_1)$ is sent to the following map in $\mathbf{Set}_{\ast}$ (see Proposition \ref{prop:Tr_functor_pointed_Set_op}).
\[
\begin{array}{llllll}
\mathsf{Tr}_b^{\ast}(f_1,f_0)&:&\mathbb{F}\mathsf{Tr}_b(t,c')&\to&\mathbb{F}\mathsf{Tr}_b(t,c)&\\
&&j&\mapsto & j & j \in \mathsf{Tr}_b(t,c')\\
&&\ast&\mapsto & \ast &\textrm{otherwise.}\\
\end{array}
\]
This means that the restriction of $\mathsf{Tr}_b^{\ast}$ on $\mathbf{Seg}(\Omega\,|\,n_1)$ can be retrieved from the application of the functor $\mathbb{F}$ on the images of $\mathsf{Tr}_b$.
\end{proof}

\subsection{Examples of pedigrads in sets}\label{sec:Example_pedigrads_in_sets}
In this section, we construct a collection of functors $\mathbf{Seg}(\Omega) \to \mathbf{Set}$ for any pointed set $(E,\varepsilon)$ and parameter $b$ in $\Omega$ (see Definition \ref{def:set_E_b_varepsilon}). Later on, we will define various classes of cones $\mathcal{W}$ in $\mathbf{Set}$ for which these functors are $\mathcal{W}$-pedigrad (see Theorem \ref{theo:E_b_varepsilon_W_iso_pedigrad_exactly_distributive} and Theorem \ref{theo:E_b_varepsilon_W_surj_pedigrad_injective}).

\begin{convention}[Notation]
In the sequel, the hom-set of a category $\mathcal{C}$ from an object $X$ to an object $Y$ will be denoted as $\mathcal{C}(X,Y)$. For instance, the set of functions from a set $X$ to a set $Y$ will be denoted by $\mathbf{Set}(X,Y)$. Also, recall that, for any category $\mathcal{C}$, the hom-sets give rise to a functor $\mathcal{C}(\_,\_):\mathcal{C}^{\mathrm{op}}\times \mathcal{C} \to \mathbf{Set}$ called the \emph{hom-functor} \cite[page 27]{MacLane}.
\end{convention}

\begin{definition}[Environment functors]\label{def:set_E_b_varepsilon}
For every element $b \in \Omega$, we will denote by $E_{b}^{\varepsilon}$ the functor $\mathbf{Seg}(\Omega) \to \mathbf{Set}$ defined as the composition of the following pair of functors.
\[
\xymatrix@C+20pt{
\mathbf{Seg}(\Omega)\ar[r]^{\mathsf{Tr}_b^{\ast}}&\mathbf{Set}_{\ast}^{\mathrm{op}}\ar[rr]^{\mathbf{Set}_{\ast}(\_,(E,\varepsilon))}&&\mathbf{Set}
}
\]
\end{definition}

\begin{remark}\label{rem:E_b_varepsilon_as words_functions}
For every object $(t,c)$ in $\mathbf{Seg}(\Omega)$, an element in $E_b^{\varepsilon}(t,c)$ can be seen as a function of the form $\mathsf{Tr}_b(t,c) \to E$ according to the following series of bijections.
\begin{align*}
E_b^{\varepsilon}(t,c)& = \mathbf{Set}_{\ast}(\mathsf{Tr}_b^{\ast}(t,c),(E,\varepsilon))&\\
&= \mathbf{Set}_{\ast}(\mathbb{F}\mathsf{Tr}_b(t,c),(E,\varepsilon))&(\textrm{Definition of }\mathsf{Tr}_b^{\ast})\\
&\cong \mathbf{Set}(\mathsf{Tr}_b(t,c),\mathbb{U}(E,\varepsilon))&(\mathbb{F} \dashv \mathbb{U})\\
&= \mathbf{Set}(\mathsf{Tr}_b(t,c),E)&(\textrm{Definition of }\mathbb{U})
\end{align*}
Because the set $\mathsf{Tr}_b(t,c)$ is equipped with the usual order of natural numbers, we will represent an element in $E_b^{\varepsilon}(t,c)$ as a word of elements in $E$ (see Example \ref{exa:elements_as_words}).
\end{remark}

\begin{example}[Objects]\label{exa:elements_as_words}
Suppose that $(\Omega,\preceq)$ denotes the Boolean pre-ordered set $\{0\leq 1\}$ and let $(E,\varepsilon)$ be the pointed set $\{\mathtt{A},\mathtt{C},\mathtt{G},\mathtt{T},\varepsilon\}$. If we consider the segment
\[
(t,c) = \xymatrix@C-30pt{
(\bullet&\bullet&\bullet)&(\circ&\circ)&(\bullet&\bullet&\bullet&\bullet)&(\bullet&\bullet&\bullet&\bullet&\bullet)&(\circ&\circ&\circ)&(\circ)
}
\]
then the set $E_1^{\varepsilon}(t,c)$ (where $b = 1$) will contain the following words (which have been parenthesized for clarity), among many others.
\[
\begin{array}{c}
\xymatrix@C-30pt{
(\mathtt{A}&\mathtt{G}&\varepsilon)&(\mathtt{T}&\mathtt{C}&\mathtt{A}&\mathtt{A})&(\mathtt{T}&\mathtt{A}&\mathtt{G}&\mathtt{G}&\varepsilon);
}\\
\xymatrix@C-30pt{
(\mathtt{G}&\mathtt{T}&\varepsilon)&(\varepsilon&\varepsilon&\varepsilon&\mathtt{C})&(\mathtt{A}&\mathtt{G}&\mathtt{T}&\mathtt{A}&\mathtt{C});
}\\
\xymatrix@C-30pt{
(\mathtt{T}&\mathtt{A}&\mathtt{A})&(\mathtt{G}&\mathtt{A}&\mathtt{T}&\mathtt{C})&(\mathtt{A}&\mathtt{G}&\mathtt{T}&\mathtt{T}&\mathtt{T});
}\\
\textrm{etc.}\\
\end{array}
\]
\end{example}

\begin{example}[Morphisms]\label{exa:morphisms_as_inclusion_of_words}
Suppose that $\Omega$ denotes the Boolean pre-ordered set $\{0\leq 1\}$ and let $(E,\varepsilon)$ be the pointed set $\{\mathtt{A},\mathtt{C},\mathtt{G},\mathtt{T},\varepsilon\}$. If we consider the morphism of segments given below, in which we use adequate labeling to show how the first segment is included in the second one,
\[
\xymatrix@C-30pt{
(\mathop{\bullet}\limits^1&\mathop{\bullet}\limits^2&\mathop{\bullet}\limits^3)&(\mathop{\circ}\limits^4&\mathop{\circ}\limits^5)&(\mathop{\bullet}\limits^6&\mathop{\bullet}\limits^7&\mathop{\bullet}\limits^8&\mathop{\bullet}\limits^9)&(\mathop{\bullet}\limits^{10\,}&\mathop{\bullet}\limits^{11})
} \to \xymatrix@C-30pt{
(\mathop{\bullet}\limits^1&\mathop{\bullet}\limits^2&\mathop{\bullet}\limits^3&\mathop{\bullet}\limits^{\ast}&\mathop{\bullet}\limits^{\ast})&(\mathop{\circ}\limits^4&\mathop{\circ}\limits^5&\mathop{\circ}\limits^{\ast})&(\mathop{\bullet}\limits^6&\mathop{\bullet}\limits^7&\mathop{\bullet}\limits^8&\mathop{\bullet}\limits^9)&(\mathop{\bullet}\limits^{\ast})&(\mathop{\circ}\limits^{10\,}&\mathop{\circ}\limits^{11})
}
\]
then the image of the previous arrow via $E_1^{\varepsilon}$ is a function whose mappings rules look as follows.
\[
\begin{array}{ccc}
\xymatrix@C-30pt{
(\mathtt{A}&\mathtt{G}&\varepsilon)&(\mathtt{T}&\mathtt{C}&\mathtt{A}&\mathtt{A})&(\mathtt{G}&\mathtt{C})
} &\mapsto &
\xymatrix@C-30pt{
(\mathtt{A}&\mathtt{G}&\varepsilon&\varepsilon&\varepsilon)&(\mathtt{T}&\mathtt{C}&\mathtt{A}&\mathtt{A})&(\varepsilon);
}\\
\xymatrix@C-30pt{
(\mathtt{G}&\mathtt{T}&\varepsilon)&(\varepsilon&\varepsilon&\varepsilon&\mathtt{C})&(\mathtt{T}&\mathtt{A})
} &\mapsto &
\xymatrix@C-30pt{
(\mathtt{G}&\mathtt{T}&\varepsilon&\varepsilon&\varepsilon)&(\varepsilon&\varepsilon&\varepsilon&\mathtt{C})&(\varepsilon);
}\\
\xymatrix@C-30pt{
(\mathtt{T}&\mathtt{A}&\mathtt{A})&(\mathtt{G}&\mathtt{A}&\mathtt{T}&\mathtt{C})&(\mathtt{A}&\mathtt{A})
} &\mapsto& 
\xymatrix@C-30pt{
(\mathtt{T}&\mathtt{A}&\mathtt{A}&\varepsilon&\varepsilon)&(\mathtt{G}&\mathtt{A}&\mathtt{T}&\mathtt{C})&(\varepsilon);
}\\
&\textrm{etc.}&\\
\end{array}
\]
Note that if one restricts oneself to morphisms in $\mathbf{Seg}(\Omega)$ that only insert new nodes and do not turn any black node into white ones, then the mappings associated with the images of such morphisms can be seen as gap insertion operations. As seen in section \ref{ssec:Main_example}, these operations are used in sequence alignment algorithms to compare sequences of different lengths together.
\[
\xymatrix@C-30pt{
(\mathop{\bullet}\limits^1&\mathop{\bullet}\limits^2&\mathop{\bullet}\limits^3)&(\mathop{\circ}\limits^4&\mathop{\circ}\limits^5)&(\mathop{\bullet}\limits^6&\mathop{\bullet}\limits^7&\mathop{\bullet}\limits^8&\mathop{\bullet}\limits^9)&(\mathop{\bullet}\limits^{10\,}&\mathop{\bullet}\limits^{11})
} \to \xymatrix@C-30pt{
(\mathop{\bullet}\limits^1&\mathop{\bullet}\limits^2&\mathop{\bullet}\limits^3)&(\mathop{\circ}\limits^4&\mathop{\circ}\limits^5&\mathop{\circ}\limits^{\ast})&(\mathop{\bullet}\limits^6&\mathop{\bullet}\limits^7&\mathop{\bullet}\limits^{\ast}&\mathop{\bullet}\limits^{\ast}&\mathop{\bullet}\limits^8&\mathop{\bullet}\limits^9)&(\mathop{\bullet}\limits^{\ast}&\mathop{\bullet}\limits^{10\,}&\mathop{\bullet}\limits^{11})
}
\]
\[
\begin{array}{ccc}
\xymatrix@C-30pt{
(\mathtt{G}&\mathtt{A}&\mathtt{C})&(\mathtt{A}&\mathtt{T}&\mathtt{T}&\mathtt{C})&(\mathtt{C}&\mathtt{T})
} &\mapsto &
\xymatrix@C-30pt{
(\mathtt{G}&\mathtt{A}&\mathtt{C})&(\mathtt{A}&\mathtt{T}&\varepsilon&\varepsilon&\mathtt{T}&\mathtt{C})&(\varepsilon&\mathtt{C}&\mathtt{T});
}\\
&\textrm{etc.}&\\
\end{array}
\]
\end{example}

\begin{proposition}\label{prop:pedigrad_representable}
For every domain $[n_1]$, the restriction of the functor $E_{b}^{\varepsilon}:\mathbf{Seg}(\Omega) \to \mathbf{Set}$ on $\mathbf{Seg}(\Omega\,|\,n_1)$ is isomorphic to the functor $\mathbf{Set}(\mathsf{Tr}_b(\_),E):\mathbf{Seg}(\Omega\,|\,n_1) \to \mathbf{Set}$. In other words, the following diagram commutes up to an isomorphism of functors.
\[
\xymatrix{
\mathbf{Seg}(\Omega\,|\,n_1)\ar[r]^{\subseteq}\ar[d]_{\mathsf{Tr}_b}&\mathbf{Seg}(\Omega)\ar[d]^{E_{b}^{\varepsilon}}\\
\mathbf{Set}^{\mathrm{op}}\ar[r]_{\mathbf{Set}(\_,E)}&\mathbf{Set}
}
\]
\end{proposition}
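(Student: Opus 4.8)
The plan is to promote the object-wise bijection already recorded in Remark \ref{rem:E_b_varepsilon_as words_functions} to a genuine natural isomorphism of functors on $\mathbf{Seg}(\Omega\,|\,n_1)$, using the adjunction $\mathbb{F} \dashv \mathbb{U}$ together with the factorization established in Proposition \ref{prop:Tr_ast_on_Chr_Omega_t_equals_F_Tr}. First I would rewrite the restriction of $E_b^{\varepsilon}$ to the subcategory $\mathbf{Seg}(\Omega\,|\,n_1)$. By definition, $E_b^{\varepsilon}$ is the composite $\mathbf{Set}_{\ast}(\_,(E,\varepsilon)) \circ \mathsf{Tr}_b^{\ast}$, and Proposition \ref{prop:Tr_ast_on_Chr_Omega_t_equals_F_Tr} identifies the restriction of $\mathsf{Tr}_b^{\ast}$ along the inclusion $\mathbf{Seg}(\Omega\,|\,n_1) \hookrightarrow \mathbf{Seg}(\Omega)$ with the composite $\mathbb{F}^{\mathrm{op}} \circ \mathsf{Tr}_b$. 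Hence the restricted functor is exactly $\mathbf{Set}_{\ast}(\_,(E,\varepsilon)) \circ \mathbb{F}^{\mathrm{op}} \circ \mathsf{Tr}_b$, i.e. $(t,c) \mapsto \mathbf{Set}_{\ast}(\mathbb{F}\mathsf{Tr}_b(t,c),(E,\varepsilon))$.

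Next I would invoke the adjunction itself. Fixing the right-hand variable at $(E,\varepsilon)$, the hom-set bijection $\mathbf{Set}_{\ast}(\mathbb{F}X,(E,\varepsilon)) \cong \mathbf{Set}(X,\mathbb{U}(E,\varepsilon)) = \mathbf{Set}(X,E)$ is natural in $X \in \mathbf{Set}$, and both sides are contravariant in $X$. This is precisely a natural isomorphism of functors $\mathbf{Set}^{\mathrm{op}} \to \mathbf{Set}$, namely $\mathbf{Set}_{\ast}(\mathbb{F}(\_),(E,\varepsilon)) \cong \mathbf{Set}(\_,E)$. Whiskering this natural isomorphism on the right by the functor $\mathsf{Tr}_b:\mathbf{Seg}(\Omega\,|\,n_1) \to \mathbf{Set}^{\mathrm{op}}$ of Proposition \ref{prop:Tr_functor_Set_op} then yields a natural isomorphism between $\mathbf{Set}_{\ast}(\mathbb{F}(\mathsf{Tr}_b(\_)),(E,\varepsilon))$ and $\mathbf{Set}(\mathsf{Tr}_b(\_),E)$. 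By the first step the left-hand functor is the restriction of $E_b^{\varepsilon}$, so this is the asserted isomorphism and the square commutes up to natural isomorphism.

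I expect no serious obstacle here, since the result is a formal consequence of the adjunction and of Proposition \ref{prop:Tr_ast_on_Chr_Omega_t_equals_F_Tr}; the only point demanding care is the bookkeeping of the contravariance introduced by the opposite categories, so that the counit/unit transport really assembles into a natural transformation of functors on $\mathbf{Seg}(\Omega\,|\,n_1)$ rather than a mere family of object-wise bijections. Should one prefer an entirely explicit check, one could instead verify naturality by hand: for a morphism $(\mathrm{id},f_0):(t,c)\to(t,c')$ the functor $\mathsf{Tr}_b$ produces the inclusion $\mathsf{Tr}_b(t,c') \subseteq \mathsf{Tr}_b(t,c)$, and one would confirm that both $E_b^{\varepsilon}(\mathrm{id},f_0)$ and the restriction map $\mathbf{Set}(\mathsf{Tr}_b(t,c),E) \to \mathbf{Set}(\mathsf{Tr}_b(t,c'),E)$ amount to precomposition with this inclusion under the identification of Remark \ref{rem:E_b_varepsilon_as words_functions}; but routing the argument through the adjunction makes this verification automatic.
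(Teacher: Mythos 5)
Your proposal is correct and follows essentially the same route as the paper's own proof: both rewrite the restricted functor via Proposition \ref{prop:Tr_ast_on_Chr_Omega_t_equals_F_Tr} as $\mathbf{Set}_{\ast}(\mathbb{F}\mathsf{Tr}_b(\_),(E,\varepsilon))$ and then apply the naturality of the adjunction isomorphism for $\mathbb{F} \dashv \mathbb{U}$. Your explicit attention to the whiskering and contravariance bookkeeping is a slightly more careful articulation of the paper's closing remark that ``these isomorphisms are natural on $\mathbf{Seg}(\Omega\,|\,n_1)$,'' but the argument is the same.
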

\begin{proof}
Note that the following series of isomorphisms hold on $\mathbf{Seg}(\Omega\,|\,n_1)$.
\begin{align*}
E_{b}^{\varepsilon}(\_) & = \mathbf{Set}_{\ast}\Big(\mathsf{Tr}_b^{\ast}(\_),(E,\varepsilon)\Big)&\\
& = \mathbf{Set}_{\ast}\Big(\mathbb{F}\mathsf{Tr}_b(\_),(E,\varepsilon)\Big)&(\textrm{Proposition }\ref{prop:Tr_ast_on_Chr_Omega_t_equals_F_Tr})\\
& \cong \mathbf{Set}\Big(\mathsf{Tr}_b(\_),\mathbb{U}(E,\varepsilon)\Big)&(\mathbb{F} \dashv \mathbb{U})\\
& = \mathbf{Set}\Big(\mathsf{Tr}_b(\_),E\Big)&(\textrm{Definition of }\mathbb{U})
\end{align*}
Since these isomorphisms are natural on $\mathbf{Seg}(\Omega\,|\,n_1)$, the statement follows.
\end{proof}

\subsection{Sequence alignments}
In this section, we use the functors defined in section \ref{sec:Example_pedigrads_in_sets} to formalize the concept of sequence alignment. The examples given in this section mainly focus on addressing our main example presented in section \ref{ssec:Main_example}.

\begin{definition}[Alignment specification]\label{def:alignment_specification}
We shall speak of an \emph{alignment specification} to refer to a wide span (Definition \ref{def:wide_spans}) in the category $\mathbf{pOrd}$ of pre-ordered sets.
\end{definition}

\begin{example}[Alignment specification]\label{exa:Alignment_specification}
Let $(\Omega,\preceq)$ be the Boolean pre-ordered set $\{0 \leq 1\}$. If we use the notations of Example \ref{exa:preparation_example}, then the following collection of morphisms in $\mathbf{pOrd}$ defines an alignment specification.
\begin{equation}\label{eq:alignment_specification}
\{\pi_i:\Omega^{\times 4} \to \Omega \}_{i \in 
\{\mathtt{a},\mathtt{b},\mathtt{c},\mathtt{d}\}}
\end{equation}
Here, the discrete category $\{\mathtt{a},\mathtt{b},\mathtt{c},\mathtt{d}\}$ is implicitly ordered with respect to the alphabetic order and hence makes (\ref{eq:alignment_specification}) a wide span as defined in Definition \ref{def:wide_spans}.

In general, alignment specifications do not necessarily need to be universal cones and the codomains of the arrows do not need be equal either. For instance, the following pair of wide spans define two valid alignment specifications.
\[
\left\{
\begin{array}{llll}
\pi_{\mathtt{a}}:&\Omega^{\times 4} &\to& \Omega,\\
\pi_{\mathtt{c}}:&\Omega^{\times 4}& \to& \Omega,\\
\pi_{\mathtt{d}}:&\Omega^{\times 4}& \to& \Omega\\
\end{array}
\right\}
\quad\quad\quad\quad
\left\{
\begin{array}{llll}
\pi_{\mathtt{a}} \times \pi_{\mathtt{b}}:&\Omega^{\times 4} &\to& \Omega^{\times 2},\\
\pi_{\mathtt{c}}:&\Omega^{\times 4}& \to& \Omega,\\
\pi_{\mathtt{d}}:&\Omega^{\times 4}& \to& \Omega\\
\end{array}
\right\}
\]
In the next article \cite{Recomb}, we will use alignment specifications made of identity morphisms.
\end{example}

\begin{definition}[Aligned pedigrads]\label{def:Aligned_pedigrads}
Let 
$
\mathbf{A} = \{f_i:(\Omega,\preceq) \to (\Omega_i,\preceq_i)\}_{i \in A}
$ be an alignment specification
and $b$ be an element in $\Omega$. We denote by $\mathbf{A}E_b^{\varepsilon}$ the functor $\mathbf{Seg}(\Omega) \to \mathbf{Set}$ resulting from the composition of the three functors given in (\ref{eq:aligned_pedigrad}), where 
\begin{itemize}
\item[-] the rightmost functor is the obvious Cartesian functor of $\mathbf{Set}$;
\item[-] the middle functor is the Cartesian product of the functors $E_{f_i(b)}^{\varepsilon}:\mathbf{Seg}(\Omega_i) \to \mathbf{Set}$;
\item[-] and the leftmost functor is the product adjoint of the cone induced by the image of $\mathbf{A}$ via $\mathbf{Seg}$ (section \ref{ssec:relating_categories_of_segments}).
\end{itemize}
\begin{equation}\label{eq:aligned_pedigrad}
\xymatrix@C+35pt{
\mathbf{Seg}(\Omega) \ar[r]^-{ (\mathbf{Seg}(f_i))_{i \in A}} & \prod_{i \in A} \mathbf{Seg}(\Omega_i) \ar[r]^-{\prod_{i \in A} E_{f_i(b)}^{\varepsilon}}& \prod_{i \in A} \mathbf{Set} \ar[r]^-{\times} & \mathbf{Set}
}
\end{equation}
Such a functor will be called the \emph{alignment} of $E_b^{\varepsilon}$ on $\mathbf{A}$.
\end{definition}

\begin{example}[Aligned pedigrads]\label{exa:Aligned_pedigrads}
Let $(\Omega,\preceq)$ denote the Boolean pre-ordered $\{0 \leq 1\}$ and let $\mathbf{A}$ denote the alignment specification given in (\ref{eq:alignment_specification}). As usual, we shall let $(E,\varepsilon)$ denote the pointed set $\{\mathtt{A},\mathtt{C},\mathtt{G},\mathtt{T},\varepsilon\}$. For any given segment $(t,c)$ in $\mathbf{Seg}(\Omega^{\times 4})$ and element $b \in \Omega^{\times 4}$, the set $\mathbf{A}E_b^{\varepsilon}(t,c)$ is equal to the following Cartesian product of sets.
\[
E_{\pi_{\mathtt{a}}(b)}^{\varepsilon}(t,\pi_{\mathtt{a}} \circ c) \times E_{\pi_{\mathtt{b}}(b)}^{\varepsilon}(t,\pi_{\mathtt{b}} \circ c) \times E_{\pi_{\mathtt{c}}(b)}^{\varepsilon}(t,\pi_{\mathtt{c}} \circ c) \times E_{\pi_{\mathtt{d}}(b)}^{\varepsilon}(t,\pi_{\mathtt{d}} \circ c)
\]
The following table illustrates what the elements of the previous set look like for different segments $(t,c)$ in $\mathbf{Seg}(\Omega^{\times 4})$ and a fixed value $b$ in $\Omega^{\times 4}$. As usual, parentheses are added for clarity and the underscore symbols\footnote{These underscore symbols should not be confused with the dash symbols that is sometimes used in bioinformatics to denote a substitution. Note that, in our case, the symbol $\varepsilon$ already plays the role of the dash symbols.} are only used to represent spaces ``to be filled out''.

\[
\begin{array}{|c|c|c|}
\hline
\cellcolor[gray]{0.8}b&\cellcolor[gray]{0.8}(t,c)&\cellcolor[gray]{0.8}\mathbf{A}E_b^{\varepsilon}(t,c)\\
\hline
\multirow{9}{*}{(1,1,1,1)} &
\!\xymatrix@C-30pt{([\mathtt{1\separ1\separ1\separ1}]&[\mathtt{1\separ1\separ1\separ1}])&([\mathtt{1\separ1\separ1\separ1}]&[\mathtt{1\separ1\separ1\separ1}]&[\mathtt{1\separ1\separ1\separ1}])}\! & 
\!\!\!\!\begin{array}{c}
\xymatrix@-30pt{
(\mathtt{A}&\mathtt{G})&(\mathtt{C}&\mathtt{G}&\mathtt{T})\\
(\mathtt{A}&\mathtt{T})&(\mathtt{T}&\mathtt{C}&\mathtt{G})\\
(\mathtt{C}&\varepsilon)&(\mathtt{A}&\mathtt{T}&\mathtt{G})\\
(\mathtt{A}&\mathtt{T})&(\mathtt{G}&\mathtt{G}&\mathtt{G})\\
}
\end{array}\!\!\!\!;\!\!\!\!\!\!
\quad
\begin{array}{c}
\xymatrix@-30pt{
(\mathtt{G}&\mathtt{T})&(\mathtt{A}&\mathtt{A}&\mathtt{T})\\
(\mathtt{C}&\mathtt{G})&(\mathtt{G}&\mathtt{T}&\varepsilon)\\
(\mathtt{A}&\mathtt{C})&(\mathtt{T}&\mathtt{T}&\mathtt{G})\\
(\varepsilon&\varepsilon)&(\mathtt{T}&\mathtt{G}&\mathtt{C})\\
}
\end{array}\!\!\!\!;\!\!\!\!\!\!
\quad
\begin{array}{c}
\xymatrix@-30pt{
(\mathtt{C}&\mathtt{A})&(\mathtt{A}&\mathtt{A}&\mathtt{C})\\
(\mathtt{C}&\mathtt{C})&(\varepsilon&\mathtt{A}&\mathtt{C})\\
(\mathtt{C}&\mathtt{T})&(\varepsilon&\mathtt{C}&\mathtt{A})\\
(\mathtt{A}&\mathtt{C})&(\mathtt{T}&\mathtt{T}&\mathtt{G})\\
}
\end{array}\!\!\!\!; \!\!\!
\quad
\textrm{etc.}\\
\cline{2-3}
&
\!\xymatrix@C-30pt{([\mathtt{1\separ0\separ1\separ0}]&[\mathtt{1\separ0\separ1\separ0}])&([\mathtt{0\separ1\separ1\separ0}]&[\mathtt{0\separ1\separ1\separ0}]&[\mathtt{0\separ1\separ1\separ0}])}\! &

\!\!\!\!\begin{array}{c}
\xymatrix@-30pt{
(\mathtt{A}&\mathtt{G})&\_&\_&\_\\
\_&\_&(\mathtt{T}&\mathtt{C}&\mathtt{G})\\
(\mathtt{C}&\varepsilon)&(\mathtt{A}&\mathtt{T}&\mathtt{G})\\
}
\end{array}\!\!\!\!;\!\!\!\!\!\!
\quad
\begin{array}{c}
\xymatrix@-30pt{
(\mathtt{G}&\mathtt{T})&\_&\_&\_\\
\_&\_&(\mathtt{G}&\mathtt{T}&\varepsilon)\\
(\mathtt{A}&\mathtt{C})&(\mathtt{T}&\mathtt{T}&\mathtt{G})\\
}
\end{array}\!\!\!\!;\!\!\!\!\!\!
\quad
\begin{array}{c}
\xymatrix@-30pt{
(\mathtt{C}&\mathtt{A})&\_&\_&\_\\
\_&\_&(\varepsilon&\mathtt{A}&\mathtt{C})\\
(\mathtt{C}&\mathtt{T})&(\varepsilon&\mathtt{C}&\mathtt{A})\\
}
\end{array}\!\!\!\!; \!\!\!
\quad
\textrm{etc.}\\
\cline{2-3}
&
\!\xymatrix@C-30pt{([\mathtt{0\separ0\separ1\separ1}]&[\mathtt{0\separ0\separ1\separ1}])&([\mathtt{0\separ0\separ1\separ1}]&[\mathtt{0\separ0\separ1\separ1}]&[\mathtt{0\separ0\separ1\separ1}])}\!
 & 
\!\!\!\!\begin{array}{c}
\xymatrix@-30pt{
(\mathtt{C}&\varepsilon)&(\mathtt{A}&\mathtt{T}&\mathtt{G})\\
(\mathtt{A}&\mathtt{T})&(\mathtt{G}&\mathtt{G}&\mathtt{G})\\
}
\end{array}\!\!\!\!;\!\!\!\!\!\!
\quad
\begin{array}{c}
\xymatrix@-30pt{
(\mathtt{A}&\mathtt{C})&(\mathtt{T}&\mathtt{T}&\mathtt{G})\\
(\varepsilon&\varepsilon)&(\mathtt{T}&\mathtt{G}&\mathtt{C})\\
}
\end{array}\!\!\!\!;\!\!\!\!\!\!
\quad
\begin{array}{c}
\xymatrix@-30pt{
(\mathtt{C}&\mathtt{T})&(\varepsilon&\mathtt{C}&\mathtt{A})\\
(\mathtt{A}&\mathtt{C})&(\mathtt{T}&\mathtt{T}&\mathtt{G})\\
}
\end{array}\!\!\!\!;\!\!\!
\quad
\textrm{etc.}\\
\cline{2-3}
&
\!\xymatrix@C-30pt{([\mathtt{1\separ1\separ0\separ0}]&[\mathtt{1\separ1\separ0\separ0}])&([\mathtt{1\separ1\separ0\separ0}]&[\mathtt{1\separ1\separ0\separ0}]&[\mathtt{1\separ1\separ0\separ0}])}\!
 & 
\!\!\!\!\begin{array}{c}
\xymatrix@-30pt{
(\mathtt{A}&\mathtt{G})&(\mathtt{C}&\mathtt{G}&\mathtt{T})\\
(\mathtt{A}&\mathtt{T})&(\mathtt{T}&\mathtt{C}&\mathtt{G})\\
}
\end{array}\!\!\!\!;\!\!\!\!\!\!
\quad
\begin{array}{c}
\xymatrix@-30pt{
(\mathtt{G}&\mathtt{T})&(\mathtt{A}&\mathtt{A}&\mathtt{T})\\
(\mathtt{C}&\mathtt{G})&(\mathtt{G}&\mathtt{T}&\varepsilon)\\
}
\end{array}\!\!\!\!;\!\!\!\!\!\!
\quad
\begin{array}{c}
\xymatrix@-30pt{
(\mathtt{C}&\mathtt{A})&(\mathtt{A}&\mathtt{A}&\mathtt{C})\\
(\mathtt{C}&\mathtt{C})&(\varepsilon&\mathtt{A}&\mathtt{C})\\
}
\end{array}\!\!\!\!;\!\!\!
\quad
\textrm{etc.}\\
\hline
\end{array}
\]
We can see that $\mathbf{A}E_b^{\varepsilon}(t,c)$ contains what we would like to understand as sequence alignments. It would therefore be natural to try to model our example, given in section \ref{ssec:Main_example}, relatively to the functor $\mathbf{A}E_{b}^{\varepsilon}$. In what follows, we define the concept of sequence alignment relative to alignments of functors $E_{b}^{\varepsilon}$ for some pointed set $(E,\varepsilon)$, pre-ordered set $\Omega$ and element $b \in \Omega$.
\end{example}

The following definition formalizes the concept of sequence alignment in terms of a subcategory, a functor and a natural transformation. In Example \ref{exa:sequence_alignments}, we will see how one can use this concept to reason about the problem given in section \ref{ssec:Main_example}.

\begin{definition}[Sequence alignment functors]\label{def:sequence_alignment}
Let $\mathbf{A}$ be an alignment specification as given in Definition \ref{def:Aligned_pedigrads} and $b$ be an element in $\Omega$. We define a \emph{sequence alignment functor over $\mathbf{A}E_b^{\varepsilon}$} as a triple $(\iota,T,\sigma)$ where $\iota$ is an inclusion functor $\iota:B \to \mathbf{Seg}(\Omega)$, $T$ is a functor $B \to \mathbf{Set}$ and $\sigma$ is a natural monomorphism $T \Rightarrow \mathbf{A}E_b^{\varepsilon} \circ \iota$.
\end{definition}

\begin{convention}[Notations]\label{conv:segment_notations}
In the sequel, it will be convenient to have short notations for the segments of the subcategory $B \hookrightarrow \mathbf{Seg}(\Omega)$ associated with a sequence alignment functor. In the context of the present article, most of our segments will have trivial topologies -- given by the canonical surjection $!_{[n]}:[n] \to [1]$ -- and will hence be of the following form for some color $b \in \Omega$.
\[
\xymatrix@C-30pt{
(b&b&b&\dots&b&b)
}
\]
In this respect, we shall denote any segment whose topology $t$ is of the form $!_{[n]}:[n] \to [1]$ and whose function $c:[1] \to \Omega$ picks out an element $b \in \Omega$ as a pair $(!_{[n]},b)$.
\end{convention}

\begin{example}[Sequence alignment functors]\label{exa:sequence_alignments}
The present example is a continuation of Example \ref{exa:Aligned_pedigrads} and  aims to illustrate the use of Definition \ref{def:sequence_alignment} in the context of our problem introduced in section \ref{ssec:Main_example}. We shall therefore use the same notations as those used in Example \ref{exa:Aligned_pedigrads}. 

As mentioned at the end of Example \ref{exa:Aligned_pedigrads}, we want to model the example of section \ref{ssec:Main_example} relative to the functor $\mathbf{A}E_{b}^{\varepsilon}$ defined in the example. The idea is to pick out, via the concept of sequence alignment functors introduced in Definition \ref{def:sequence_alignment}, the pairwise sequence alignments living in the images of $\mathbf{A}E_{b}^{\varepsilon}$ outputted by the dynamic programming algorithm presented in section \ref{ssec:Main_example}. In this respect, let us compute the sequence alignments of every pair of individuals given in section \ref{ssec:Main_example} by following the method described therein. Doing so, we obtain the following table of pairwise sequence alignments.

\[
\begin{array}{|l|c|}
\hline
\cellcolor[gray]{0.8}\textbf{Pairs}&\cellcolor[gray]{0.8}\textbf{Sequence alignments}\\
\hline
\cellcolor[gray]{0.9}
\!\!\begin{array}{l}
\texttt{Anne}\\
\texttt{Bob}
\end{array}
&
\begin{array}{l}
\mathtt{ACCGACTG}\\
\mathtt{A\varepsilon CATCTG}
\end{array}\,
\begin{array}{l}
\mathtt{ACCGACTG}\\
\mathtt{ACA\varepsilon TCTG}
\end{array}\,
\begin{array}{l}
\mathtt{ACCGA\varepsilon CTG}\\
\mathtt{A\varepsilon C\varepsilon ATCTG}
\end{array}\,
\begin{array}{l}
\mathtt{ACCGACTG}\\
\mathtt{ACAT\varepsilon CTG}
\end{array}
\\
\hline
\cellcolor[gray]{0.9}
\!\!\begin{array}{l}
\texttt{Anne}\\
\texttt{Craig}
\end{array}
&
\begin{array}{l}
\mathtt{ACCGACTG}\\
\mathtt{ACCGTC\varepsilon A}
\end{array}\,
\begin{array}{l}
\mathtt{ACCGACTG}\\
\mathtt{ACCGTCA\varepsilon}
\end{array}
\\
\hline
\cellcolor[gray]{0.9}
\!\!\begin{array}{l}
\texttt{Anne}\\
\texttt{Doug}
\end{array}
&
\begin{array}{l}
\mathtt{ACCGACTG}\\
\mathtt{A\varepsilon CTACTG}
\end{array}\,
\begin{array}{l}
\mathtt{ACCGACTG}\\
\mathtt{ACT\varepsilon ACTG}
\end{array}
\\
\hline
\cellcolor[gray]{0.9}
\!\!\begin{array}{l}
\texttt{Bob}\\
\texttt{Craig}
\end{array}
&
\begin{array}{l}
\mathtt{A\varepsilon CATCTG}\\
\mathtt{ACCGTC\varepsilon A}
\end{array}\,
\begin{array}{l}
\mathtt{ACA\varepsilon TCTG}\\
\mathtt{ACCGTC\varepsilon A}
\end{array}\,
\begin{array}{l}
\mathtt{A\varepsilon CATCTG}\\
\mathtt{ACCGTCA\varepsilon}
\end{array}\,
\begin{array}{l}
\mathtt{ACA\varepsilon TCTG}\\
\mathtt{ACCGTCA\varepsilon}
\end{array}
\\
\hline
\cellcolor[gray]{0.9}
\!\!\begin{array}{l}
\texttt{Bob}\\
\texttt{Doug}
\end{array}
&
\begin{array}{l}
\mathtt{ACATCTG}\\
\mathtt{ACTACTG}
\end{array}\,
\begin{array}{l}
\mathtt{ACAT\varepsilon CTG}\\
\mathtt{AC\varepsilon TACTG}
\end{array}\,
\begin{array}{l}
\mathtt{AC\varepsilon ATCTG}\\
\mathtt{ACTA\varepsilon CTG}
\end{array}
\\
\hline
\cellcolor[gray]{0.9}
&
\begin{array}{l}
\mathtt{ACCGTCA}\\
\mathtt{ACTACTG}
\end{array}\,
\begin{array}{l}
\mathtt{ACCGTC\varepsilon A}\\
\mathtt{A\varepsilon CTACTG}
\end{array}\,
\begin{array}{l}
\mathtt{ACCGTC\varepsilon A}\\
\mathtt{ACT\varepsilon ACTG}
\end{array}\,
\begin{array}{l}
\mathtt{ACCGT\varepsilon C\varepsilon A}\\
\mathtt{A\varepsilon C\varepsilon TACTG}
\end{array}\,
\begin{array}{l}
\mathtt{ACCGTC\varepsilon A}\\
\mathtt{ACTA\varepsilon CTG}
\end{array}\,
\begin{array}{l}
\mathtt{AC\varepsilon CGTCA}\\
\mathtt{ACTACT\varepsilon G}
\end{array}\\
\cline{2-2}
\cellcolor[gray]{0.9}
\!\!\begin{array}{l}
\texttt{Craig}\\
\texttt{Doug}
\end{array}&
\begin{array}{l}
\mathtt{ACC\varepsilon GTCA}\\
\mathtt{ACTACT\varepsilon G}
\end{array}\,
\begin{array}{l}
\mathtt{ACCG\varepsilon TCA}\\
\mathtt{ACTACT\varepsilon G}
\end{array}\,
\begin{array}{l}
\mathtt{AC\varepsilon \varepsilon CGTCA}\\
\mathtt{ACTAC\varepsilon T\varepsilon G}
\end{array}\,
\begin{array}{l}
\mathtt{ACCGTCA\varepsilon }\\
\mathtt{A\varepsilon CTACTG}
\end{array}\,
\begin{array}{l}
\mathtt{ACCGTCA\varepsilon }\\
\mathtt{ACT\varepsilon ACTG}
\end{array}\,
\begin{array}{l}
\mathtt{ACCGT\varepsilon CA\varepsilon }\\
\mathtt{A\varepsilon C\varepsilon TACTG}
\end{array}
\\
\cline{2-2}
\cellcolor[gray]{0.9}&
\begin{array}{l}
\mathtt{ACCGTCA\varepsilon }\\
\mathtt{ACTA\varepsilon CTG}
\end{array}\,
\begin{array}{l}
\mathtt{AC\varepsilon CGTCA}\\
\mathtt{ACTACTG\varepsilon }
\end{array}\,
\begin{array}{l}
\mathtt{ACC\varepsilon GTCA}\\
\mathtt{ACTACTG\varepsilon }
\end{array}\,
\begin{array}{l}
\mathtt{ACCG\varepsilon TCA}\\
\mathtt{ACTACTG\varepsilon }
\end{array}\,
\begin{array}{l}
\mathtt{AC\varepsilon \varepsilon CGTCA}\\
\mathtt{ACTAC\varepsilon TG\varepsilon }
\end{array}\\
\hline
\end{array}
\]

If we now take $b$ to be equal to the element $(1,1,1,1)$ in $\Omega^{\times 4}$, the previous table can reasonably be seen as a `part' of the functor $\mathbf{A}E_b^{\varepsilon}$ by interpreting each pairwise sequence alignment given above as an element in one of the images of $\mathbf{A}E_b^{\varepsilon}$ (see the table given in Example \ref{exa:Aligned_pedigrads}). In this example, we shall implement this `part' by considering the sequence alignment functor $(B,\iota,T,\sigma)$ whose subcategory $B \hookrightarrow \mathbf{Seg}(\Omega)$ is the union of the full subcategories of $\mathbf{Seg}(\Omega^{\times 4}\,|\,7)$, $\mathbf{Seg}(\Omega^{\times 4}\,|\,8)$ and $\mathbf{Seg}(\Omega^{\times 4}\,|\,9)$ that contain the following segments:
\[
\begin{array}{l@{\quad\quad}l@{\quad\quad}l@{\quad\quad}l@{\quad\quad}l}
(!_{[7]},[\mathtt{0\separ1\separ0\separ1}])&(!_{[7]},[\mathtt{0\separ0\separ1\separ1}])&(!_{[7]},[\mathtt{0\separ0\separ0\separ1}])&(!_{[8]},[\mathtt{1\separ1\separ0\separ0}])&(!_{[8]},[\mathtt{1\separ0\separ1\separ0}])\\
(!_{[8]},[\mathtt{1\separ0\separ0\separ1}])&(!_{[8]},[\mathtt{0\separ1\separ1\separ0}])&(!_{[8]},[\mathtt{0\separ1\separ0\separ1}])&(!_{[8]},[\mathtt{0\separ0\separ1\separ1}])&(!_{[8]},[\mathtt{0\separ0\separ0\separ1}])\\
(!_{[8]},[\mathtt{1\separ0\separ0\separ0}])&(!_{[8]},[\mathtt{0\separ1\separ0\separ0}])&(!_{[8]},[\mathtt{0\separ0\separ1\separ0}])&(!_{[9]},[\mathtt{0\separ0\separ1\separ1}])&(!_{[9]},[\mathtt{1\separ1\separ0\separ0}])
\end{array}
\]
and whose monomorphism $T \Rightarrow \mathbf{A}E_b^{\varepsilon} \circ \iota$ picks out the pairwise sequence alignments shown in the previous table. Before making this last statement more precise, let us explain how the colors and the domains of the segments of $B$ (shown above) will be used to organize the pairwise sequence alignments contained in the images of $T$. First, as already suggested in Example \ref{exa:preparation_example}, we want to use a tuple $(\mathtt{x_{a}},\mathtt{x_{b}},\mathtt{x_{c}},\mathtt{x_{d}})$ in $\Omega^{\times 4}$ to specify whether an individual is included in a sequence alignment or not by setting the variable indexed by the initial of the individual to 1 or 0, respectively. For instance, setting $\mathtt{x_{a}}$ to 1 would mean that \texttt{Anne} is part of the alignment computation. Second, we want to make the cardinality of the domains of the segments match the length of the pairwise sequence alignments with which they are associated. Thus, we could decide to encode the row of the previous table comparing $\texttt{Anne}$ and $\texttt{Bob}$ by taking the following images for $T$.
\[
\begin{array}{c}
T(!_{[8]},[\mathtt{1\separ1\separ0\separ0}]) = 
\left\{
\begin{array}{l}\!\!\!\!\!\!
\begin{array}{l}
\mathtt{ACCGACTG}\\
\mathtt{A\varepsilon CATCTG}
\end{array},\,
\begin{array}{l}
\mathtt{ACCGACTG}\\
\mathtt{ACA\varepsilon TCTG}
\end{array},\,
\begin{array}{l}
\mathtt{ACCGACTG}\\
\mathtt{ACAT\varepsilon CTG}
\end{array}\!\!\!\!\!\!
\end{array}
\right\}
\end{array}
\]

\[
\begin{array}{c}
T(!_{[9]},[\mathtt{1\separ1\separ0\separ0}]) = 
\left\{
\begin{array}{l}\!\!\!\!\!\!
\begin{array}{l}
\mathtt{ACCGA\varepsilon CTG}\\
\mathtt{A\varepsilon C\varepsilon ATCTG}
\end{array}\!\!\!\!\!\!
\end{array}
\right\}
\end{array}
\]
Similarly, the other images of $T$ on segments containing exactly two symbols $\mathtt{1}$ could be taken as follows (the specification of the second last image, of cardinality 12, is left to the reader).
\[
\begin{array}{|l|}
\hline
\multicolumn{1}{|c|}{\cellcolor[gray]{0.8}T}\\
\hline
T(!_{[8]},[\mathtt{1\separ0\separ1\separ0}]) = \left\{
\begin{array}{l}\!\!\!\!\!\!
\begin{array}{l}
\mathtt{ACCGACTG}\\
\mathtt{ACCGTC\varepsilon A}
\end{array}\!;\,
\begin{array}{l}
\mathtt{ACCGACTG}\\
\mathtt{ACCGTCA\varepsilon}
\end{array}\!\!\!\!\!\!
\end{array}
\right\}
\\
T(!_{[8]},[\mathtt{1\separ0\separ0\separ1}]) = \left\{
\begin{array}{l}\!\!\!\!\!\!
\begin{array}{l}
\mathtt{ACCGACTG}\\
\mathtt{A\varepsilon CTACTG}
\end{array}\!;\,
\begin{array}{l}
\mathtt{ACCGACTG}\\
\mathtt{ACT\varepsilon ACTG}
\end{array}\!\!\!\!\!\!
\end{array}
\right\}
\\
T(!_{[8]},[\mathtt{0\separ1\separ1\separ0}]) = \left\{
\begin{array}{l}\!\!\!\!\!\!
\begin{array}{l}
\mathtt{A\varepsilon CATCTG}\\
\mathtt{ACCGTC\varepsilon A}
\end{array}\!;\,
\begin{array}{l}
\mathtt{ACA\varepsilon TCTG}\\
\mathtt{ACCGTC\varepsilon A}
\end{array}\!;\,
\begin{array}{l}
\mathtt{A\varepsilon CATCTG}\\
\mathtt{ACCGTCA\varepsilon}
\end{array}\!;\,
\begin{array}{l}
\mathtt{ACA\varepsilon TCTG}\\
\mathtt{ACCGTCA\varepsilon}
\end{array}\!\!\!\!\!\!
\end{array}
\right\}
\\
T(!_{[7]},[\mathtt{0\separ1\separ0\separ1}]) = \left\{
\begin{array}{l}\!\!\!\!\!\!
\begin{array}{l}
\mathtt{ACATCTG}\\
\mathtt{ACTACTG}
\end{array}\!\!\!\!\!\!
\end{array}
\right\}
\\
T(!_{[8]},[\mathtt{0\separ1\separ0\separ1}]) = \left\{
\begin{array}{l}\!\!\!\!\!\!
\begin{array}{l}
\mathtt{ACAT\varepsilon CTG}\\
\mathtt{AC\varepsilon TACTG}
\end{array}\!;\,
\begin{array}{l}
\mathtt{AC\varepsilon ATCTG}\\
\mathtt{ACTA\varepsilon CTG}
\end{array}\!\!\!\!\!\!
\end{array}
\right\}
\\
T(!_{[7]},[\mathtt{0\separ0\separ1\separ1}]) = \left\{
\begin{array}{l}\!\!\!\!\!\!
\begin{array}{l}
\mathtt{ACCGTCA}\\
\mathtt{ACTACTG}
\end{array}\!\!\!\!\!\!
\end{array}
\right\} \\
T(!_{[8]},[\mathtt{0\separ0\separ1\separ1}]) = \left\{
\begin{array}{l}\!\!\!\!\!\!
\begin{array}{l}
\mathtt{ACCGTC\varepsilon A}\\
\mathtt{A\varepsilon CTACTG}
\end{array}\!;\,
\begin{array}{l}
\dots
\end{array}\!\!\!\!\!\!
\end{array}
\right\} \\
T(!_{[9]},[\mathtt{0\separ0\separ1\separ1}]) = \left\{
\begin{array}{l}\!\!\!\!\!\!
\begin{array}{l}
\mathtt{ACCGT\varepsilon C\varepsilon A}\\
\mathtt{A\varepsilon C\varepsilon TACTG}
\end{array}\!;\,
\begin{array}{l}
\mathtt{AC\varepsilon \varepsilon CGTCA}\\
\mathtt{ACTAC\varepsilon T\varepsilon G}
\end{array}\!;\,
\begin{array}{l}
\mathtt{ACCGT\varepsilon CA\varepsilon }\\
\mathtt{A\varepsilon C\varepsilon TACTG}
\end{array}\!;\,
\begin{array}{l}
\mathtt{AC\varepsilon \varepsilon CGTCA}\\
\mathtt{ACTAC\varepsilon TG\varepsilon }
\end{array}\!\!\!\!\!\!
\end{array}
\right\} \\
\hline
\end{array}
\]
In addition to these sets, we also want to include sets that enable us to compare the previous sequence alignments. For instance, the segments indexing the images $T(!_{[8]},[\mathtt{1\separ1\separ0\separ0}])$ and $T(!_{[8]},[\mathtt{0\separ1\separ1\separ0}])$ both have their coordinates $\texttt{x}_{\mathtt{b}}$ set to $1$, so they should both be able to go to the image $T(!_{[8]},[\mathtt{0\separ1\separ0\separ0}])$. An easy choice for such an image is to pick 
\[
T(!_{[8]},[\mathtt{0\separ1\separ0\separ0}]) := \mathbf{A}E_b^{\varepsilon}(!_{[8]},[\mathtt{0\separ1\separ0\separ0}])
\]
so that we have a diagram of functions as shown in (\ref{exa:construction_T_table_pullback}) by sending the DNA sequences of \texttt{Bob} -- which constitute the bottom rows of the sequence alignments of $T(!_{[8]},[\mathtt{1\separ1\separ0\separ0}])$ and the top rows of the sequence alignments of $T(!_{[8]},[\mathtt{0\separ1\separ1\separ0}])$ -- to the corresponding DNA sequences in $\mathbf{A}E_b^{\varepsilon}(!_{[8]},[\mathtt{0\separ0\separ1\separ0}])$.
\begin{equation}\label{exa:construction_T_table_pullback}
\xymatrix@C-35pt@R-10pt{
T(!_{[8]},[\mathtt{1\separ1\separ0\separ0}])\ar[rd]&&T(!_{[8]},[\mathtt{0\separ1\separ1\separ0}])\ar[ld]\\
&T(!_{[8]},[\mathtt{0\separ1\separ0\separ0}])&
}
\end{equation}
We can proceed similarly for the other images of $T$. However, we want to be cautious in doing so as every relation of the form (\ref{exa:construction_T_table_pullback}) will correspond to a test of compatibility between the sequence alignments contained in the images of $T$. Indeed, trying to link too many images of $T$ together may later lead to a set of empty `associations'. For instance, linking the set $T(!_{[8]},[\mathtt{0\separ0\separ1\separ1}])$ and $T(!_{[9]},[\mathtt{0\separ0\separ1\separ1}])$ through the set $\mathbf{A}E_b^{\varepsilon}(!_{[9]},[\mathtt{0\separ0\separ0\separ1}])$ (as shown below, in (\ref{eq:non_compatibility_different_length})) will label certain alignments of $T(!_{[8]},[\mathtt{0\separ0\separ1\separ1}])$ as `inconsistent' because they cannot be related to those of $T(!_{[9]},[\mathtt{0\separ0\separ1\separ1}])$. Ultimately, this is the type of conclusion that we would like to reach, but, in the present situation, the considered sets of alignments belong to the same pair of individuals and hence should not be able to contradict each other (see Remark \ref{rem:resolving_inconsistencies} for further discussion).
\begin{equation}\label{eq:non_compatibility_different_length}
\begin{array}{ccccccc}
T(!_{[8]},[\mathtt{0\separ0\separ1\separ1}])&\rightarrow&\mathbf{A}E_b^{\varepsilon}(!_{[8]},[\mathtt{0\separ0\separ0\separ1}])&\rightarrow&\mathbf{A}E_b^{\varepsilon}(!_{[9]},[\mathtt{0\separ0\separ0\separ1}])&\leftarrow &T(!_{[9]},[\mathtt{0\separ0\separ1\separ1}])\\
\begin{array}{l}
\mathtt{ACCGTC\varepsilon A}\\
\mathtt{ACT\varepsilon ACTG}
\end{array}
&\mapsto &
\begin{array}{l}
\mathtt{ACT\varepsilon ACTG}
\end{array}
&\mapsto &
\begin{array}{l}
\mathtt{A\varepsilon CT\varepsilon ACTG}
\end{array}
&\rotatebox[origin=c]{180}{$\mapsto$}& ???
\end{array}
\end{equation}
In other words, for this example, we want to take $T(!_{[n]},c)$ to be the set $\mathbf{A}E_b^{\varepsilon}(!_{[n]},c)$ for any integer $n$ and element $c$ in $\Omega^{\times 4}$ satisfying the following relation.
\[
(n,c) \in \{(8,[\mathtt{1\separ0\separ0\separ0}]), (8,[\mathtt{0\separ1\separ0\separ0}]), (8,[\mathtt{0\separ0\separ1\separ0}]), (8,[\mathtt{0\separ0\separ0\separ1}]),(7,[\mathtt{0\separ0\separ0\separ1}])\}
\]
We then link the images of $T$ at segments whose colors contain exactly two symbols \texttt{1} to the images of $T$ at segments whose colors contain a single \texttt{1} by forgetting either the top row or the bottom row of each pairwise sequence alignment contained in the domain, in the same fashion as in diagram (\ref{exa:construction_T_table_pullback}). The obtained maps are obviously indexed by the arrows of $B$ that makes sense with the forgetful operation they define. Doing so defines an inclusion functor $\iota:B \hookrightarrow \mathbf{Seg}(\Omega^{\times 4})$, a functor $T:B \to \mathbf{Set}$ and a natural transformation $T \Rightarrow \mathbf{A}E_b^{\varepsilon} \circ \iota$ that model the table given at the beginning of this example. 
\end{example}

\begin{remark}[Resolving inconsistencies]\label{rem:resolving_inconsistencies}
The present remark discusses the choices made in Example \ref{exa:sequence_alignments} for the construction of the functor $T:B \to \mathbf{Set}$. We shall keep the same notations as those introduced therein. First, recall that the category $B$ (the domain of $T$) was defined so that the images $T(!_{[8]},[\mathtt{0\separ0\separ1\separ1}])$ and $T(!_{[9]},[\mathtt{0\separ0\separ1\separ1}])$ could not connect through another image of $T$.
\begin{equation}\label{eq:resolving_inconsistencies}
\xymatrix@C-35pt@R-10pt{
T(!_{[8]},[\mathtt{0\separ0\separ1\separ1}])\ar[rd]&&T(!_{[9]},[\mathtt{0\separ0\separ1\separ1}])\ar[ld]\\
&\mathbf{A}E_{b}^{\varepsilon}(!_{[9]},[\mathtt{0\separ0\separ0\separ1}])&
}
\end{equation}
The reason for this was that not every element in $T(!_{[8]},[\mathtt{0\separ0\separ1\separ1}])$ could find a corresponding element in $T(!_{[9]},[\mathtt{0\separ0\separ1\separ1}])$ through the set $\mathbf{A}E_{b}^{\varepsilon}(!_{[9]},[\mathtt{0\separ0\separ0\separ1}])$ so that diagram (\ref{eq:resolving_inconsistencies}) would eventually lead to label those elements as inconsistent. In our case, this type of scenario should be avoided because we are mainly interested in unravelling inconsistencies between different tables produced by the dynamic programming algorithm of section \ref{ssec:Main_example} (while diagram (\ref{eq:resolving_inconsistencies}) compares alignments coming from the same table).
However, it could certainly be interesting to be able to know whether the sequence alignments of \texttt{Craig} and \texttt{Doug} of length 8 are consistent with those of length 9. A way to do this without creating a conflict with our main goal would be to add a new color to $\Omega$, say by taking the pre-ordered set $\Omega = \{0 \leq 1 \leq 2\}$ and keeping $b = (1,1,1,1)$. Then, we could use this new color to study the compatibility of the alignments coming from the same table. 
For instance, we could give $T$ an image at the segment $(!_{[8]},[\mathtt{0\separ0\separ2\separ2}])$ that would contain the same alignments as those contained in $T(!_{[9]},[\mathtt{0\separ0\separ1\separ1}])$, but the resulting image $T(!_{[8]},[\mathtt{0\separ0\separ2\separ2}])$ would be linked to the image $T(!_{[8]},[\mathtt{0\separ0\separ1\separ1}])$ through the image 
$$T(!_{[9]},[\mathtt{0\separ0\separ0\separ1}]) := \mathbf{A}E_{b}^{\varepsilon}(!_{[9]},[\mathtt{0\separ0\separ0\separ1}])$$
as shown in the diagram given below (the left-hand side arrow makes sense with the functor structure because the inequality $[\mathtt{0\separ0\separ0\separ1}] \preceq [\mathtt{0\separ0\separ2\separ2}]$ holds in $\Omega^{\times 4}$).
\[
\xymatrix@C-35pt@R-10pt{
T(!_{[8]},[\mathtt{0\separ0\separ2\separ2}])\ar[rd]&&T(!_{[9]},[\mathtt{0\separ0\separ1\separ1}])\ar[ld]\\
&T(!_{[9]},[\mathtt{0\separ0\separ0\separ1}])&
}
\]
Meanwhile, the image $T(!_{[8]},[\mathtt{0\separ0\separ1\separ1}])$ would be reserved to studying the compatibility with the other pairs of individuals and would be isolated from $T(!_{[9]},[\mathtt{0\separ0\separ1\separ1}])$ because the category $\mathbf{Seg}(\Omega)$ does not allow morphisms of the type $(!_{[8]},[\mathtt{0\separ0\separ1\separ1}]) \to (!_{[8]},[\mathtt{0\separ0\separ2\separ2}])$ to exist.
\end{remark}

\subsection{From right Kan extensions to multiple sequence alignments}\label{ssec:From_RKE_to_MSA}
In this section, we show that the right Kan extension (see Definition \ref{def:right_kan_extensions}) of a sequence alignment functor (Definition \ref{def:sequence_alignment}) contains what one would like to understand as the outputs of the table gluing algorithm described at the end of section \ref{ssec:Main_example}.

We start the section with an example showing how sequence alignment functors can be used, along with limits, to reason about the relatedness of a group of individuals. In particular, we illustrate it in the context of our main example (section \ref{ssec:Main_example}).

\begin{example}[Reasoning with sequence alignment functors]\label{exa:reasoning_with_sequence_alignment}
The present example continues the discussion of Example \ref{exa:sequence_alignments} and shows how the sequence alignment functor $(\iota,T,\sigma)$ defined therein can be used to reason about our main example given in section \ref{ssec:Main_example}. More specifically, we will show how the sequence alignment functor $(\iota,T,\sigma)$ can be used to deduce phylogenetic relationships between the individuals of section \ref{ssec:Main_example} by looking at whether certain arrows induced by the structure of $(\iota,T,\sigma)$ are proper surjections, bijections and simply functions.

We start this example by looking at a surjection. First, an easy calculation shows that the pullback of diagram (\ref{exa:construction_T_table_pullback}) can be mapped surjectively onto the image $T(!_{[8]},[\mathtt{1\separ0\separ1\separ0}])$ by forgetting the DNA sequences associated with \texttt{Bob}.
\begin{equation}\label{eq:surjection_AnneCraigBob}
T(!_{[8]},[\mathtt{1\separ1\separ0\separ0}]) \times_{\mathtt{x_{b}}} T(!_{[8]},[\mathtt{0\separ1\separ1\separ0}])  \to T(!_{[8]},[\mathtt{1\separ0\separ1\separ0}])
\end{equation}
More specifically, the associated surjection, of the form shown in (\ref{eq:surjection_AnneCraigBob}), maps the pairs of pairwise sequence alignments shown in (\ref{eq:domain_surjection_ABC}) (the domain of (\ref{eq:surjection_AnneCraigBob})) to the pairwise sequence alignments shown in (\ref{eq:domain_surjection_AC}) (the codomain of (\ref{eq:surjection_AnneCraigBob})) by forgetting the bottom and top sequences of the first and second components of the pairs contained in the domain. 
\begin{equation}\label{eq:domain_surjection_ABC}
\left\{
\begin{array}{l}
\begin{array}{l}
\left(
\begin{array}{l}
\mathtt{ACCGACTG}\\
\mathtt{A\varepsilon CATCTG}
\end{array}
\right)\\
\left(
\begin{array}{l}
\mathtt{A\varepsilon CATCTG}\\
\mathtt{ACCGTC\varepsilon A}
\end{array}
\right)
\end{array}

\begin{array}{l}
\left(
\begin{array}{l}
\mathtt{ACCGACTG}\\
\mathtt{A CA\varepsilon TCTG}
\end{array}
\right)\\
\left(
\begin{array}{l}
\mathtt{A CA\varepsilon TCTG}\\
\mathtt{ACCGTC\varepsilon A}
\end{array}
\right)
\end{array}

\begin{array}{l}
\left(
\begin{array}{l}
\mathtt{ACCGACTG}\\
\mathtt{A\varepsilon CATCTG}
\end{array}
\right)\\
\left(
\begin{array}{l}
\mathtt{A\varepsilon CATCTG}\\
\mathtt{ACCGTCA\varepsilon}
\end{array}
\right)
\end{array}

\begin{array}{l}
\left(
\begin{array}{l}
\mathtt{ACCGACTG}\\
\mathtt{A CA\varepsilon TCTG}
\end{array}
\right)\\
\left(
\begin{array}{l}
\mathtt{A CA\varepsilon TCTG}\\
\mathtt{ACCGTC A\varepsilon}
\end{array}
\right)
\end{array}
\end{array}
\right\}
\end{equation}

\begin{equation}\label{eq:domain_surjection_AC}
\left\{
\begin{array}{l}
\left(
\begin{array}{l}
\mathtt{ACCGACTG}\\
\mathtt{ACCGTC\varepsilon A}
\end{array}
\right)
\left(
\begin{array}{l}
\mathtt{ACCGACTG}\\
\mathtt{ACCGTCA\varepsilon}
\end{array}
\right)
\end{array}
\right\}
\end{equation}
As seen in (\ref{eq:domain_surjection_ABC}), the elements of the domain can be interpreted as multiple sequence alignments of \texttt{Anne}, \texttt{Bob} and \texttt{Craig}, where the sequence of $\texttt{Bob}$ is repeated twice. The fact that these multiple sequence alignments can be sent to the pairwise sequences of \texttt{Anne} and \texttt{Craig} tells us that \texttt{Bob} does not inform us of new ways of relating \texttt{Anne} and \texttt{Craig} together. Furthermore, the fact that function (\ref{eq:domain_surjection_ABC}) is also a surjection tells us that the sequence of \texttt{Bob} is also unable to provide any potential correction to the sequence alignments of \texttt{Anne} and \texttt{Craig} (by showing us some sequence alignment of \texttt{Anne} and \texttt{Caig} that would not be supported by the sequence alignments of (\ref{eq:domain_surjection_ABC})). In the present case, it is as if \texttt{Bob} is unnecessary for understanding possible nuances in the evolution of \texttt{Anne} and \texttt{Craig}. A very probable reason could be that \texttt{Anne} and \texttt{Craig} are much closer to each other genetically than they are to \texttt{Bob} -- this would be represented by the following evolutionary tree.
\[
\mathrm{
\xy
(0,-11)*{}="a";
"a"-<+4ex,-4ex>*{}="a1"**\dir{-},
"a"-<-4ex,-4ex>*{}="a2"**\dir{-},
"a1";
"a1"-<+4ex,-4ex>*{\texttt{Anne}}="a11"**\dir{-},
"a1"-<-4ex,-4ex>*{\texttt{Craig}}="a12"**\dir{-},
"a2";
"a2"-<-4ex,-4ex>*{\texttt{Bob}}="a111"**\dir{-},
\endxy
}
\]
From the point of view of the problem exposed in section \ref{ssec:Main_example}, surjection (\ref{eq:surjection_AnneCraigBob}) tells us that gluing the comparison table of \texttt{Anne} and \texttt{Bob} with the comparison table of \texttt{Bob} and \texttt{Craig} along the edge of \texttt{Bob} does not contradict the table of \texttt{Anne} and \texttt{Craig}. In other words, there is no obstruction for passing from the two dimensional comparisons of \texttt{Anne} with \texttt{Bob} and \texttt{Bob} with \texttt{Craig} to the three dimensional table comparing \texttt{Anne}, \texttt{Bob} and \texttt{Craig} together. 

Let us now look at an example for which the function is not surjective. First, recall that the sequence alignment $(\iota,T,\sigma)$ was defined such that cospan (\ref{eq:cospan_DougCraig_along_Anne}) exists (see Example \ref{exa:sequence_alignments}).
\begin{equation}\label{eq:cospan_DougCraig_along_Anne}
\xymatrix@C-35pt@R-10pt{
T(!_{[8]},[\mathtt{1\separ0\separ1\separ0}])\ar[rd]&&T(!_{[8]},[\mathtt{1\separ0\separ0\separ1}])\ar[ld]\\
&T(!_{[8]},[\mathtt{1\separ0\separ0\separ0}])&
}
\end{equation}
An easy calculation then shows that the pullback of cospan (\ref{eq:cospan_DougCraig_along_Anne}) contains multiple sequence alignments of  \texttt{Anne}, \texttt{Craig} and \texttt{Doug} that can be mapped to the sequence alignments of \texttt{Craig} and \texttt{Doug} contained in $T(!_{[8]},[\mathtt{0\separ0\separ1\separ1}])$ by forgetting the DNA sequences associated with \texttt{Anne}.
\begin{equation}\label{eq:surjection_AnneCraigDoug}
T(!_{[8]},[\mathtt{1\separ0\separ1\separ0}]) \times_{\mathtt{x_{a}}} T(!_{[8]},[\mathtt{1\separ0\separ0\separ1}])  \to T(!_{[8]},[\mathtt{0\separ0\separ1\separ1}])
\end{equation}
However, comparing the cardinalities of the domain and codomain of the resulting function, shown in (\ref{eq:surjection_AnneCraigDoug}), informs us that this mapping cannot be surjective. This suggests that, while \texttt{Anne} does not give new ways of relating \texttt{Craig} and \texttt{Doug} together, the relatedness of \texttt{Craig} and \texttt{Doug} may still be nuanced by the consideration of \texttt{Anne}. In other words, the evolution of \texttt{Craig} and \texttt{Doug} cannot completely be explained without the sequence of \texttt{Anne}. Such a relationship could be represented by one of the following evolutionary trees, in which the removal of \texttt{Anne} may prevent us from understanding how different \texttt{Craig} and \texttt{Doug} are.
\[
\mathrm{
\xy
(0,-11)*{}="a";
"a"-<+4ex,-4ex>*{}="a1"**\dir{-},
"a"-<-4ex,-4ex>*{}="a2"**\dir{-},
"a1";
"a1"-<+4ex,-4ex>*{\texttt{Craig}}**\dir{-},
"a1"-<-4ex,-4ex>*{\texttt{Anne}}**\dir{-},
"a2";
"a2"-<-4ex,-4ex>*{\texttt{Doug}}**\dir{-},
\endxy
}
\quad\quad\quad\quad\quad\quad
\mathrm{
\xy
(0,-11)*{}="a";
"a"-<+4ex,-4ex>*{}="a1"**\dir{-},
"a"-<-4ex,-4ex>*{}="a2"**\dir{-},
"a1";
"a1"-<+4ex,-4ex>*{\texttt{Craig}}**\dir{-},
"a2";
"a2"-<-4ex,-4ex>*{\texttt{Doug}}**\dir{-},
"a2"-<+4ex,-4ex>*{\texttt{Anne}}**\dir{-},
\endxy
}
\]
From the point of view of the problem exposed in section \ref{ssec:Main_example}, function (\ref{eq:surjection_AnneCraigDoug}) tells us that gluing the comparison table of \texttt{Anne} and \texttt{Craig} with the comparison table of \texttt{Anne} and \texttt{Doug} along the edge of \texttt{Anne} does not confirm all the pairwise sequence alignments computed for \texttt{Craig} and \texttt{Doug}. This suggests that the relatedness of \texttt{Craig} and \texttt{Doug} may be too old to be completely described by the table computed for \texttt{Craig} and \texttt{Doug} and would be better analyzed through the gluing of other tables.
\end{example}

Our goal is now to formalize the discussion of Example \ref{exa:reasoning_with_sequence_alignment} through the notion of right Kan extension (given in Definition \ref{def:right_kan_extensions}). This will allow us to motivate the introduction of chromologies in Example \ref{exa:Global-local_seq_alignments}. We start by defining the category on which the right Kan extension is \emph{computed}.

\begin{definition}[Extending category]\label{def:extending_category}
Let $(\Omega,\preceq)$ be a pre-ordered set. For every object $\tau$ in $\mathbf{Seg}(\Omega)$ and functor $\iota:B \to \mathbf{Seg}(\Omega)$, we will denote by $({\tau \downarrow \iota})$ the category whose objects are pairs $(\upsilon,f)$ where $\upsilon$ is an object in $B$ and $f$ is a morphism $\tau \to \iota(\upsilon)$ in $\mathbf{Seg}(\Omega)$ and whose arrows $(\upsilon,f) \to (\upsilon',f')$ are given by morphisms $g:\upsilon \to \upsilon'$ in $B$ that make the following square commute in $\mathbf{Seg}(\Omega)$.
\[
\xymatrix{
\tau\ar[d]_{f}\ar@{=}[r]&\tau\ar[d]^{f'}\\
\iota(\upsilon)\ar[r]_{\iota(g)}&\iota(\upsilon')
}
\]
\end{definition}

\begin{remark}[Extending category as a cone]\label{rem:Extending_category_as_a_cone}
Let $(\Omega,\preceq)$ be a pre-ordered set. For every object $\tau$ in $\mathbf{Seg}(\Omega)$ and functor $\iota:B \to \mathbf{Seg}(\Omega)$, the category $({\tau \downarrow \iota})$ defined in Definition \ref{def:extending_category} can be pictured as a cone of the form $\Delta_{({\tau \downarrow \iota})}(\tau)\Rightarrow \iota_{\tau} \circ \iota$ (section \ref{ssec:cones}). The arrows of the transformation associated with the cone are given by the objects of the category $({\tau \downarrow \iota})$ (see non-dashed arrows shown below) while its diagram is formed by the arrows of $({\tau \downarrow \iota})$ (see dashed arrows shown below).
\[
\xymatrix@C-20pt@R-5pt{
&&&\tau\ar@/_.8pc/[llld]\ar@/_1pc/[lldd]\ar[ld]\ar[rd]\ar@/^1pc/[rrdd]\ar@/^.8pc/[rrrd]&&&\\
\iota(\upsilon_1)&&\iota(\upsilon_2)&\dots&\iota(\upsilon_{n-1})&&\iota(\upsilon_n)\\
&\iota(\upsilon_{1,2})\ar@{-->}[ul]^{\iota(g_{1})}\ar@{-->}[ur]_{\iota(g_{2})}&&\dots&&\iota(\upsilon_{n-1,n})\ar@{-->}[ul]^{\iota(g_{n-1})}\ar@{-->}[ur]_{\iota(g_{n})}&\\
}
\]
\end{remark}

\begin{convention}[Extending diagram]\label{conv:extending_diagram}
Let $(\Omega,\preceq)$ be a pre-ordered set. For every object $\tau$ in $\mathbf{Seg}(\Omega)$ and functor $\iota:B \to \mathbf{Seg}(\Omega)$, we will denote by $\iota_{\tau}$ the obvious functor $({\tau \downarrow \iota}) \to B$ that maps an object $(\upsilon,f)$ in $({\tau \downarrow \iota})$ to the object $\upsilon$ in $B$ and maps an arrow $g:(\upsilon,f) \to (\upsilon',f')$ in $({\tau \downarrow \iota})$ to the arrow $g:\upsilon \to \upsilon'$ in $B$.
\end{convention}

The following definition introduces the concept of right Kan extension by using its well-known expression in terms of limits (see \cite[Chap. X, Th. 1]{MacLane}). 

\begin{definition}[Right Kan extensions]\label{def:right_kan_extensions}
Let $(\Omega,\preceq)$ be a pre-ordered set, and $\iota:B \to \mathbf{Seg}(\Omega)$ and $T:B \to \mathbf{Set}$ be two functors. We define the \emph{right Kan extension of $T$ along $\iota$} as the canonical functor $\mathsf{Ran}_{\iota}T:\mathbf{Seg}(\Omega) \to \mathbf{Set}$ defined by the following limit construction at every object $\tau$ in $\mathbf{Seg}(\Omega)$.
\begin{equation}\label{eq:formula_right_kan_extension}
\mathsf{Ran}_{\iota}T(\tau) = \mathsf{lim}_{({\tau \downarrow \iota})} T \circ \iota_{\tau}
\end{equation}
The images of this functor on the arrows of $\mathbf{Seg}(\Omega)$ will be described, later, in Remark \ref{rem:functoriality_RKE}.
\end{definition}

\begin{example}[Right Kan extensions]\label{exa:right_kan_extension_images}
Let $(\iota,T,\sigma)$ denote the sequence alignment functor over $\mathbf{A}E_b^{\varepsilon}$ defined Example \ref{exa:reasoning_with_sequence_alignment}. The goal of the present example is to show what the images of the right Kan extension of $T:B \to \mathbf{Set}$ along the inclusion $\iota:B \to \mathbf{Seg}(\Omega^{\times 4})$ look like. We will compute two images, namely $\mathsf{Ran}_{\iota}T(!_{[8]},[\mathtt{1\separ1\separ0\separ0}])$ and $\mathsf{Ran}_{\iota}T(!_{[8]},[\mathtt{1\separ1\separ1\separ0}])$. 

Let us start with the image of the segment $(!_{[8]},[\mathtt{1\separ1\separ0\separ0}])$. According to Definition \ref{def:right_kan_extensions}, we need to compute the set of objects and the set of arrows defining the category $((!_{[8]},[\mathtt{1\separ1\separ0\separ0}]) \downarrow \iota)$. We follow Definition \ref{def:extending_category}, for which we use the category $B$ described in Example \ref{exa:sequence_alignments}, and deduce that the objects of $((!_{[8]},[\mathtt{1\separ1\separ0\separ0}]) \downarrow \iota)$ are of the following form.
\begin{equation}\label{eq:RKE_computation_first_image:the_objects}
\begin{array}{lll}
(!_{[8]},[\mathtt{1\separ1\separ0\separ0}]) \to (!_{[8]},[\mathtt{0\separ1\separ0\separ0}]) &\quad\quad\quad &
(!_{[8]},[\mathtt{1\separ1\separ0\separ0}]) \to (!_{[8]},[\mathtt{1\separ0\separ0\separ0}])\\
(!_{[8]},[\mathtt{1\separ1\separ0\separ0}]) \to (!_{[8]},[\mathtt{1\separ1\separ0\separ0}])&\quad\quad\quad&
(!_{[8]},[\mathtt{1\separ1\separ0\separ0}]) \to (!_{[9]},[\mathtt{1\separ1\separ0\separ0}])
\end{array}
\end{equation}
Then, Definition \ref{def:extending_category} tells us that the arrows of the category $((!_{[8]},[\mathtt{1\separ1\separ0\separ0}]) \downarrow \iota)$ are given by the arrows of $B$ that can form commutative triangles with the arrows of (\ref{eq:RKE_computation_first_image:the_objects}). 

Let us describe the arrows of $((!_{[8]},[\mathtt{1\separ1\separ0\separ0}]) \downarrow \iota)$ more explicitly. First, by definition of $B$, any object of $((!_{[8]},[\mathtt{1\separ1\separ0\separ0}]) \downarrow \iota)$ of the type shown in the bottom-right corner of (\ref{eq:RKE_computation_first_image:the_objects}) is isolated from any object of  a different type in (\ref{eq:RKE_computation_first_image:the_objects}). Since there are exactly $9$ representatives of the type 
\[
(!_{[8]},[\mathtt{1\separ1\separ0\separ0}]) \to (!_{[9]},[\mathtt{1\separ1\separ0\separ0}])
\]
in $\mathbf{Seg}(\Omega^{\times 4}\,|\,9)$, the category $((!_{[8]},[\mathtt{1\separ1\separ0\separ0}]) \downarrow \iota)$ has exactly 9 isolated objects of this type. However, because the image of the segment $(!_{[9]},[\mathtt{1\separ1\separ0\separ0}])$ via $T$ is terminal (see Example \ref{exa:sequence_alignments}), these isolated objects will not matter in the computation of the limit $\mathsf{Ran}_{\iota}T(!_{[8]},[\mathtt{1\separ1\separ0\separ0}])$ (see formula (\ref{eq:formula_right_kan_extension})). 
What matters is the diagram encoded by the other objects, which we now describe. First, because $\mathbf{Seg}(\Omega^{\times 4}\,|\,8)$ is a pre-order category (Proposition \ref{prop:Seg_Omega_n_porder}), the arrows of $((!_{[8]},[\mathtt{1\separ1\separ0\separ0}]) \downarrow \iota)$ that relate the remaining objects are unique. Also, observe that the object $(!_{[8]},[\mathtt{1\separ1\separ0\separ0}])$ can be related to the objects $(!_{[8]},[\mathtt{0\separ1\separ0\separ0}])$ and $(!_{[8]},[\mathtt{1\separ0\separ0\separ0}])$ through a cospan in $B$. Using formula (\ref{eq:formula_right_kan_extension}), we deduce that isomorphism (\ref{eq:image_RKE_8_1100}) holds.
\begin{equation}\label{eq:image_RKE_8_1100}
\mathsf{Ran}_{\iota}T(!_{[8]},[\mathtt{1\separ1\separ0\separ0}]) \cong T(!_{[8]},[\mathtt{1\separ1\separ0\separ0}])
\end{equation}
In other words, the image of the right Kan extension $\mathsf{Ran}_{\iota}T$ at the segment $(!_{[8]},[\mathtt{1\separ1\separ0\separ0}])$ contains all the pairwise sequence alignments of \texttt{Anne} and \texttt{Bob} coming from the dynamic programming algorithm.

Let us now compute the image of the segment $(!_{[8]},[\mathtt{1\separ1\separ1\separ0}])$. To do so, we need to describe the set of objects and the set of arrows of the category $({(!_{[8]},[\mathtt{1\separ1\separ1\separ0}]) \downarrow \iota})$. In the present case, the set of objects is made of arrows of the following type.
\[
(!_{[8]},[\mathtt{1\separ1\separ1\separ0}]) \to (!_{[8]},[\mathtt{1\separ0\separ0\separ0}]) \quad\quad (!_{[8]},[\mathtt{1\separ1\separ1\separ0}]) \to (!_{[8]},[\mathtt{0\separ1\separ0\separ0}]) \quad\quad (!_{[8]},[\mathtt{1\separ1\separ1\separ0}]) \to (!_{[8]},[\mathtt{0\separ0\separ1\separ0}])
\]
\[
(!_{[8]},[\mathtt{1\separ1\separ1\separ0}]) \to (!_{[8]},[\mathtt{1\separ0\separ1\separ0}]) \quad\quad\quad\quad 
(!_{[8]},[\mathtt{1\separ1\separ1\separ0}]) \to (!_{[8]},[\mathtt{0\separ1\separ1\separ0}])
\]
\[
(!_{[8]},[\mathtt{1\separ1\separ1\separ0}]) \to (!_{[8]},[\mathtt{1\separ1\separ0\separ0}]) \quad\quad\quad\quad  (!_{[8]},[\mathtt{1\separ1\separ1\separ0}]) \to (!_{[9]},[\mathtt{1\separ1\separ0\separ0}]) 
\]
After eliminating the objects whose images via $T$ are terminal and only considering the morphisms that relate the remaining objects in $({(!_{[8]},[\mathtt{1\separ1\separ1\separ0}]) \downarrow \iota})$, we can deduce from formula (\ref{eq:formula_right_kan_extension}) that the image of $\mathsf{Ran}_{\iota}T$ at the segment $(!_{[8]},[\mathtt{1\separ1\separ1\separ0}])$ is isomorphic to the limit of the following diagram.
\begin{equation}\label{eq:diagram_exa_right_kan_extension}
\xymatrix@C-15pt@R+5pt{
T(!_{[8]},[\mathtt{1\separ0\separ1\separ0}])\ar[rd]\ar[rrd]|>>>>>>>>>\hole&&T(!_{[8]},[\mathtt{0\separ1\separ1\separ0}])\ar[ld]\ar[rd]&&\ar[ld]\ar[lld]|>>>>>>>>>\hole T(!_{[8]},[\mathtt{1\separ1\separ0\separ0}])\\
&T(!_{[8]},[\mathtt{0\separ0\separ1\separ0}])&T(!_{[8]},[\mathtt{1\separ0\separ0\separ0}])&T(!_{[8]},[\mathtt{0\separ1\separ0\separ0}])&
}
\end{equation}
As shown in Example \ref{exa:reasoning_with_sequence_alignment}, the elements of $\mathsf{Ran}_{\iota}T(!_{[8]},[\mathtt{1\separ1\separ1\separ0}])$ can be seen as triples of pairwise sequence alignments contained in $T(!_{[8]},[\mathtt{1\separ0\separ1\separ0}])$, $T(!_{[8]},[\mathtt{0\separ1\separ1\separ0}])$ and $T(!_{[8]},[\mathtt{1\separ1\separ0\separ0}])$ that encode multiple sequence alignments between the sequences of \texttt{Anne}, \texttt{Bob} and \texttt{Craig}.
\end{example}

\begin{remark}[Right Kan extension and gluing of tables]\label{rem:RKE_gluing_tables}
It should be clear that the images of the right Kan extension computed in Example \ref{exa:right_kan_extension_images} capture the higher dimensional gluings mentioned in section \ref{ssec:Main_example}. The idea is that one formally specifies how the pairwise comparison tables are glued together via the category defined in Definition \ref{def:extending_category} and one uses the limit formula given in Definition \ref{def:right_kan_extensions} to collect the compatible sequence alignments for these gluings. As a result, the right Kan extension can be seen as a model for a multiple sequence alignment algorithm. From now on, our goal will be to describe how this model can be used to reason about the mechanisms linking the individuals considered in the sequence alignment functor.
\end{remark}

\begin{remark}[Functoriality]\label{rem:functoriality_RKE}
Let $(\Omega,\preceq)$ be a pre-ordered set, and $\iota:B \to \mathbf{Seg}(\Omega)$ and $T:B \to \mathbf{Set}$ be two functors. Let us explain why the map $\tau \mapsto \mathsf{Ran}_{\iota} T(\tau)$ constructed in Definition \ref{def:right_kan_extensions} induces a functor from $\mathbf{Seg}(\Omega)$ to $\mathbf{Set}$. First, notice that the functor $\iota_{\tau}:({\tau \downarrow \iota}) \to B$ defined in Convention \ref{conv:extending_diagram} is natural in $\tau$ on the opposite category $\mathbf{Seg}(\Omega)^{\mathrm{op}}$, which means that every morphism $h:\tau \to \tau'$ in $\mathbf{Seg}(\Omega)$ induces a functor 
\[
h^{*}:({\tau'\downarrow\iota}) \to ({\tau\downarrow\iota})
\]
for which the identity $\iota_{\tau'}  = \iota_{\tau} \circ h^{*}$ holds. This last equation means that the functor $h^{*}$ sends an object $(\upsilon,f)$ in $({\tau'\downarrow\iota})$ to the object $(\upsilon,f \circ h)$ in $({\tau\downarrow\iota})$. The image of the morphism $h:\tau \to \tau'$ via $\mathsf{Ran}_{\iota}T$ is then the canonical morphism induced by pre-composing the diagram of the limit shown below, on the left, with $h^{*}$.
\[
\xymatrix@C+20pt{
\mathsf{lim}_{({\tau \downarrow \iota})} T \circ \iota_{\tau} \ar[r]^-{\mathsf{Ran}_{\iota}T(h)}& \mathsf{lim}_{({\tau \downarrow \iota})} T \circ \iota_{\tau} \circ h^{*}
}
\]
The functor structure of the mapping $\tau \mapsto \mathsf{Ran}_{\iota} T(\tau)$ then follows from the universality of limits.
\end{remark}

\begin{example}[Reasoning with right Kan extensions]\label{exa:Reasoning_with_right_Kan_extensions}
The present example is a continuation of the discussion of Example \ref{exa:reasoning_with_sequence_alignment} from the point of view of Definition \ref{def:right_kan_extensions} and Example \ref{exa:right_kan_extension_images}. We shall keep the same notations as those used in Example \ref{exa:reasoning_with_sequence_alignment} and let $(\iota,T,\sigma)$ denote the sequence alignment functor over $\mathbf{A}E_b^{\varepsilon}$ used thereof. Our goal is to reformulate the statement given in Example \ref{exa:reasoning_with_sequence_alignment} regarding the surjection
\[
T(!_{[8]},[\mathtt{1\separ1\separ0\separ0}]) \times_{\mathtt{x_{b}}} T(!_{[8]},[\mathtt{0\separ1\separ1\separ0}])  \to T(!_{[8]},[\mathtt{1\separ0\separ1\separ0}])
\]
in terms of the right Kan extension of $T$ along $\iota$. First, it follows from the description given therein that this function, represented by the dashed arrow below, induces a cone over diagram (\ref{eq:diagram_exa_right_kan_extension}) --  the cone is shown below, using the dashed and dotted arrows.
\[
\xymatrix@C-35pt@R+5pt{
&&\ar@{..>}[rrd]\ar@{..>}[d]\ar@{-->}[lld]T(!_{[8]},[\mathtt{1\separ1\separ0\separ0}]) \times_{\mathtt{x_{b}}} T(!_{[8]},[\mathtt{0\separ1\separ1\separ0}])&&\\
T(!_{[8]},[\mathtt{1\separ0\separ1\separ0}])\ar[rd]\ar[rrd]|>>>>>>>>>>>>\hole&&T(!_{[8]},[\mathtt{0\separ1\separ1\separ0}])\ar[ld]\ar[rd]&&\ar[ld]\ar[lld]|>>>>>>>>>>>>\hole T(!_{[8]},[\mathtt{1\separ1\separ0\separ0}])\\
&T(!_{[8]},[\mathtt{0\separ0\separ1\separ0}])&T(!_{[8]},[\mathtt{1\separ0\separ0\separ0}])&T(!_{[8]},[\mathtt{0\separ1\separ0\separ0}])&
}
\]
The universal property of limits then gives us a factorization of the dashed surjection through one of the canonical projections associated with the limit of diagram (\ref{eq:diagram_exa_right_kan_extension}). More specifically, this projection is of the form shown in (\ref{eq:canonical_projection_reformulation_RKE}).
\begin{equation}\label{eq:canonical_projection_reformulation_RKE}
\mathsf{Ran}_{\iota}T(!_{[8]},[\mathtt{1\separ1\separ1\separ0}]) \to T(!_{[8]},[\mathtt{1\separ0\separ1\separ0}])
\end{equation}
The usual properties of surjections (or, in fact, those of epimorphisms) imply that projection (\ref{eq:canonical_projection_reformulation_RKE}) is also a surjection. Using a similar reasoning to the one used to deduce isomorphism (\ref{eq:image_RKE_8_1100}), we can show that the codomain of projection (\ref{eq:canonical_projection_reformulation_RKE}) is isomorphic to $\mathsf{Ran}_{\iota}T(!_{[8]},[\mathtt{1\separ0\separ1\separ0}])$. This means that (\ref{eq:canonical_projection_reformulation_RKE}) is of the following form.
\begin{equation}\label{eq:canonical_projection_reformulation_RKE_images}
\mathsf{Ran}_{\iota}T(!_{[8]},[\mathtt{1\separ1\separ1\separ0}]) \longrightarrow \mathsf{Ran}_{\iota}T(!_{[8]},[\mathtt{1\separ0\separ1\separ0}])
\end{equation}
In fact, we can even show that the previous surjection is the image of the obvious morphism of segments
\[
(!_{[8]},[\mathtt{1\separ1\separ1\separ0}]) \to (!_{[8]},[\mathtt{1\separ0\separ1\separ0}])
\]
via the functor described in Remark \ref{rem:functoriality_RKE}.
In the spirit of Remark \ref{rem:RKE_gluing_tables}, in which the images of the right Kan extension were interpreted as models for higher dimensional gluings of comparison tables between DNA sequences, surjection (\ref{eq:canonical_projection_reformulation_RKE_images}) tells us that the higher dimensional gluing of the comparison tables of \texttt{Anne}, \texttt{Bob} and \texttt{Craig} is completely captured by the comparison table of \texttt{Anne} and \texttt{Craig}, with some uncertainty as to what exactly links \texttt{Anne} and \texttt{Craig} through \texttt{Bob}.
\end{example}

\begin{example}[Reasoning about data consistency]\label{exa:right_kan_extension_data_consistency}
As already suggested through the discussions of Example \ref{exa:sequence_alignments}, Remark \ref{rem:resolving_inconsistencies} and Example \ref{exa:reasoning_with_sequence_alignment}, limits can give ways to assess the \emph{consistency} of the data. For instance, the right Kan extension of a sequence alignment functor is given by a limit (Definition \ref{def:right_kan_extensions}) that computes \emph{compatible} sets of sequence alignments (these compatible sets of alignments were interpreted as multiple sequence alignments in Remark \ref{rem:RKE_gluing_tables}). In this example, we compute limits of images of the right Kan extension to study how \emph{consistent} these compatible sets of sequence alignments are. As before, we assess this consistency by studying the properties of certain canonical arrows induced by these limits. 

We shall continue the discussion started in Example \ref{exa:Reasoning_with_right_Kan_extensions} and look at the limit of diagram (\ref{eq:diagram_exa_right_kan_extension}).
First, observe that we can copy the reasoning that was used in Example \ref{exa:right_kan_extension_images} to deduce isomorphism (\ref{eq:image_RKE_8_1100}) to show that diagram (\ref{eq:diagram_exa_right_kan_extension}) is in fact a diagram of images of $\mathsf{Ran}_{\iota}T$ as follows.
\[
\xymatrix@C-45pt@R+5pt{
\mathsf{Ran}_{\iota}T(!_{[8]},[\mathtt{1\separ0\separ1\separ0}])\ar[rd]\ar[rrd]|>>>>>>>>>\hole&&\mathsf{Ran}_{\iota}T(!_{[8]},[\mathtt{0\separ1\separ1\separ0}])\ar[ld]\ar[rd]&&\ar[ld]\ar[lld]|>>>>>>>>>\hole \mathsf{Ran}_{\iota}T(!_{[8]},[\mathtt{1\separ1\separ0\separ0}])\\
&\mathsf{Ran}_{\iota}T(!_{[8]},[\mathtt{0\separ0\separ1\separ0}])\quad\quad&\mathsf{Ran}_{\iota}T(!_{[8]},[\mathtt{1\separ0\separ0\separ0}])&\quad\quad\mathsf{Ran}_{\iota}T(!_{[8]},[\mathtt{0\separ1\separ0\separ0}])&
}
\]
Remark \ref{rem:functoriality_RKE} even tells us that the previous diagram is the image of the underlying diagram of segments via the right Kan extension $\mathsf{Ran}_{\iota}T$. More specifically, if we let $F:A \to \mathbf{Seg}(\Omega)$ denote the diagram of segments indexing the previous diagram, then the computation of the image $\mathsf{Ran}_{\iota}T(!_{[8]},[\mathtt{1\separ1\separ1\separ0}])$ described in Example \ref{exa:right_kan_extension_images} shows that the following canonical arrow is an isomorphism in $\mathbf{Set}$.
\[
\mathsf{Ran}_{\iota}T(!_{[8]},[\mathtt{1\separ1\separ1\separ0}]) \to \mathsf{lim}_{A} \mathsf{Ran}_{\iota}T \circ F
\]
From the point of view of Remark \ref{rem:RKE_gluing_tables}, this means that there is no uncertainty as to how \texttt{Anne}, \texttt{Bob} and \texttt{Craig} relate to each other from the point of view of their pairwise sequence alignments -- the previous isomorphism thus informs us that the table gluing procedure is perfectly consistent. In Example \ref{exa:Global-local_seq_alignments}, we will look at an instance of a canonical arrow that fails to be an isomorphism and is only a surjection. We will see that this type of arrow informs us that the integrated data is associated with some uncertainties.
\end{example}

The idea behind the right Kan extension of a sequence alignment functor is to collect \emph{all} the local and global information that is accessible from the point of view of a particular segment (for more intuition, see Remark \ref{rem:Extending_category_as_a_cone}). Since the domain category of a sequence alignment functor can be designed to control the integration of this information (see Remark \ref{rem:resolving_inconsistencies}), the right Kan extension of a sequence alignment functor gives us a controlled procedure to construct multiple sequence alignments (Remark \ref{rem:RKE_gluing_tables}) through the parsing of local and global pieces of information within the data set.

In bioinformatics, similar heuristics have been developed for the construction of sequence alignments, one of the most popular being the algorithm BLAST \cite{Altschul}. This algorithm constructs a sequence alignment by looking at the local patches of a set of DNA strands and aligning them according to a given scoring system. While the right Kan extension of a sequence alignment functor proceeds in a similar fashion, its scoring system is more categorical than numerical -- this is discussed in Example \ref{exa:Global-local_seq_alignments}.

\begin{example}[Global alignments \emph{versus} local alignments]\label{exa:Global-local_seq_alignments}
In bioinformatics, the dynamic programming algorithm presented in section \ref{ssec:Main_example} is usually used with two main classes of scoring systems. The first class, known as the \emph{Needleman-Wunsch algorithm} \cite{NeedlemanWunsch}, aims to find global sequence alignments by initializing the comparison table with incremented gap penalties. This was the type of scoring system that we used in section \ref{ssec:Main_example}, in which gap penalties were incremented by 1. 
The second class is known as the \emph{Smith-Waterman algorithm} \cite{SmithWaterman} and is used to find local sequence alignments by initializing the comparison table with null gap penalties. 
Hybrid scoring systems that only set gap penalties to 0 for either the rows or the columns can be used to detect semi-global sequence alignments.
\[
\begin{array}{c|c|c}
\hline
\cellcolor[gray]{0.8}\textbf{Global} & \cellcolor[gray]{0.8}\textbf{Semi-global} &\cellcolor[gray]{0.8} \textbf{Local}\\
\hline
\xymatrix@C-30pt@R-15pt{
\ar[d]\bullet&\bullet&\bullet\ar[d]&\ar[rd]\bullet&\bullet&\bullet&\bullet&\bullet&\bullet\ar[rd]&\\
\bullet&\bullet&\bullet&\circ&\bullet&\bullet&\bullet&\bullet&\bullet&\bullet\\
}
&
\xymatrix@C-30pt@R-15pt{
\ar[d]\bullet&\bullet&\bullet\ar[d]&\ar[rd]\bullet&\bullet&\bullet&\bullet&\bullet&\bullet\ar[rd]&&&&\\
\bullet&\bullet&\bullet&\circ&\bullet&\bullet&\bullet&\bullet&\bullet&\bullet&\circ&\circ&\circ\\
}
&
\xymatrix@C-30pt@R-15pt{
&&&\ar[d]\bullet&\bullet&\bullet\ar[d]&\ar[rd]\bullet&\bullet&\bullet&\bullet&\bullet&\bullet\ar[rd]&&&&\\
\circ&\circ&\circ&\bullet&\bullet&\bullet&\circ&\bullet&\bullet&\bullet&\bullet&\bullet&\bullet&\circ&\circ&\circ\\
}
\\
\end{array}
\]
In this example, our goal is to show that the right Kan extension $\mathsf{Ran}_{\iota}T$ associated with the sequence alignment functor constructed in Example \ref{exa:sequence_alignments} captures both local and global aspects of the sequence alignment algorithms mentioned earlier. We will see that the local information is detected by the type of morphism presented in Example \ref{exa:Flexibility_morphism}. The subsequent discussion will show that local pieces of information often come with more uncertainty than global ones. The first part of our discussion will consist in computing the images of the four homologous segments $(!_{[8]},[\mathtt{1\separ0\separ0\separ0}])$, $(!_{[8]},[\mathtt{1\separ0\separ1\separ0}])$, $(!_{[8]},[\mathtt{1\separ0\separ0\separ1}])$ and $(!_{[8]},[\mathtt{1\separ0\separ1\separ1}])$ via $\mathsf{Ran}_{\iota}T$. Because the computation of these images are all very similar and all follow formula (\ref{eq:formula_right_kan_extension}), we will only detail the calculation of the image $\mathsf{Ran}_{\iota}T(!_{[8]},[\mathtt{1\separ0\separ1\separ1}])$ and directly give the images of the other segments.

To compute the image of the segment $(!_{[8]},[\mathtt{1\separ0\separ1\separ1}])$ via $\mathsf{Ran}_{\iota}T$, we need to look at the collection of objects of the category $((!_{[7]},[\mathtt{1\separ0\separ1\separ1}])\downarrow \iota)$ which consists of all the arrows of the following type in $\mathbf{Seg}(\Omega^{\times 4})$.
\[
(!_{[8]},[\mathtt{1\separ0\separ1\separ1}]) \to (!_{[8]},[\mathtt{1\separ0\separ0\separ1}])\quad\quad
(!_{[8]},[\mathtt{1\separ0\separ1\separ1}]) \to (!_{[8]},[\mathtt{1\separ0\separ1\separ0}])\quad\quad
(!_{[8]},[\mathtt{1\separ0\separ1\separ1}]) \to (!_{[8]},[\mathtt{0\separ0\separ1\separ1}])
\]
\[
(!_{[8]},[\mathtt{1\separ0\separ1\separ1}]) \to (!_{[8]},[\mathtt{1\separ0\separ0\separ0}])\quad\quad
(!_{[8]},[\mathtt{1\separ0\separ1\separ1}]) \to (!_{[8]},[\mathtt{0\separ0\separ1\separ0}])\quad\quad
(!_{[8]},[\mathtt{1\separ0\separ1\separ1}]) \to (!_{[8]},[\mathtt{0\separ0\separ0\separ1}])
\]
\[
(!_{[8]},[\mathtt{1\separ0\separ1\separ1}]) \to (!_{[9]},[\mathtt{0\separ0\separ1\separ1}])
\]
While the objects encoded by arrows in $\mathbf{Seg}(\Omega^{\times 4}\,|\,8)$ are unique (see Proposition \ref{prop:Seg_Omega_n_porder}), the objects of the type $(!_{[8]},[\mathtt{1\separ0\separ1\separ1}]) \to (!_{[9]},[\mathtt{0\separ0\separ1\separ1}])$ possess exactly 9 representatives.
After examining the relations existing between these objects in $((!_{[8]},[\mathtt{1\separ0\separ1\separ1}])\downarrow \iota)$, formula (\ref{eq:formula_right_kan_extension}) implies that the image of $\mathsf{Ran}_{\iota}T$ at the segment $(!_{[8]},[\mathtt{1\separ0\separ1\separ1}])$ is isomorphic to the set
\[
L_8([\mathtt{1\separ0\separ1\separ1}]) \times T(!_{[9]},[\mathtt{0\separ0\separ1\separ1}])^{\times 9}
\]
where we denote by $L_8([\mathtt{0111}])$ the limit of the diagram induced by the arrows of the category $((!_{[7]},[\mathtt{1\separ0\separ1\separ1}])\downarrow \iota)$ between the segments of the domain $[8]$ (see the diagram below).
\[
\xymatrix@C-15pt@R+5pt{
T(!_{[8]},[\mathtt{0\separ0\separ1\separ1}])\ar[rd]\ar[rrd]|>>>>>>>>>\hole&&T(!_{[8]},[\mathtt{1\separ0\separ1\separ0}])\ar[ld]\ar[rd]&&\ar[ld]\ar[lld]|>>>>>>>>>\hole T(!_{[8]},[\mathtt{1\separ0\separ0\separ1}])\\
&T(!_{[8]},[\mathtt{0\separ0\separ1\separ0}])&T(!_{[8]},[\mathtt{0\separ0\separ0\separ1}])&T(!_{[8]},[\mathtt{1\separ0\separ0\separ0}])&
}
\]
A similar analysis for the images of $\mathsf{Ran}_{\iota}T$ at the segments $(!_{[8]},[\mathtt{1\separ0\separ0\separ0}])$, $(!_{[8]},[\mathtt{1\separ0\separ1\separ0}])$, and $(!_{[8]},[\mathtt{1\separ0\separ0\separ1}])$ gives the following collection of isomorphisms.
\[
\mathsf{Ran}_{\iota}T(!_{[8]},[\mathtt{1\separ0\separ0\separ0}]) \cong 
T(!_{[8]},[\mathtt{1\separ0\separ0\separ0}])
\]
\[
\mathsf{Ran}_{\iota}T(!_{[8]},[\mathtt{1\separ0\separ1\separ0}]) \cong T(!_{[8]},[\mathtt{1\separ0\separ1\separ0}])
\]
\[
\mathsf{Ran}_{\iota}T(!_{[8]},[\mathtt{1\separ0\separ0\separ1}])\cong 
T(!_{[8]},[\mathtt{1\separ0\separ0\separ1}])
\]

Let us now use the images of the four segments $(!_{[8]},[\mathtt{1\separ0\separ0\separ0}])$, $(!_{[8]},[\mathtt{1\separ0\separ1\separ0}])$, $(!_{[8]},[\mathtt{1\separ0\separ0\separ1}])$ and $(!_{[8]},[\mathtt{1\separ0\separ1\separ1}])$ to see how the images of the right Kan extension integrate the local pieces of information available from the sequence alignment functor $(\iota,T,\sigma)$. First, the functoriality of $\mathsf{Ran}_{\iota}T$ gives us a commutative diagram as follows for the obvious choices of morphisms in $\mathbf{Seg}(\Omega\,|\,8)$.
\[
\xymatrix@C-20pt@R-10pt{
&\mathsf{Ran}_{\iota}T(!_{[8]},[\mathtt{1\separ0\separ1\separ1}])\ar[ld]\ar[rd]&\\
\mathsf{Ran}_{\iota}T(!_{[8]},[\mathtt{1\separ0\separ1\separ0}])\ar[rd]&&\ar[ld]\mathsf{Ran}_{\iota}T(!_{[8]},[\mathtt{1\separ0\separ0\separ1}])\\
&\mathsf{Ran}_{\iota}T(!_{[8]},[\mathtt{1\separ0\separ0\separ0}])&
}
\]
Even though the conclusion of Example \ref{exa:right_kan_extension_data_consistency} could suggest that this diagram is a pullback, our computation shows that the image $\mathsf{Ran}_{\iota}T(!_{[8]},[\mathtt{1\separ0\separ1\separ1}])$ is not isomorphic to the pullback of the lower part of the diagram and is only related to it via a projection of the following form.
\begin{equation}\label{eq:arrow_Ran_obstruction}
L_8([\mathtt{1\separ0\separ1\separ1}]) \times T(!_{[9]},[\mathtt{0\separ0\separ1\separ1}])^{\times 9} \longrightarrow T(!_{[8]},[\mathtt{1\separ0\separ1\separ0}]) \times_{\mathtt{x_{a}}} T(!_{[8]},[\mathtt{1\separ0\separ0\separ1}])
\end{equation}
Here, we can view the set $T(!_{[9]},[\mathtt{0\separ0\separ1\separ1}])^{\times 9}$ as (formally) containing the local sections of length 8 taken from the sequence alignments of length 9 associated with \texttt{Doug} and \texttt{Craig}. Arrow (\ref{eq:arrow_Ran_obstruction}) then tries to relate these local sections to the local sections of length 8 taken from the sequence alignment of \texttt{Anne} and \texttt{Craig} and that of \texttt{Anne} and \texttt{Doug}. However, arrow (\ref{eq:arrow_Ran_obstruction}) fails to map the elements of the product
\[
T(!_{[9]},[\mathtt{0\separ0\separ1\separ1}])^{\times 9}
\]
to elements in its codomain and is forced to forget these elements in the same way as a proper Cartesian projection map would do. This failure is not surprising since we designed the domain of $T$ so that the alignments of $T(!_{[9]},[\mathtt{0\separ0\separ1\separ1}])$ can never be connected to those of $T(!_{[8]},[\mathtt{0\separ0\separ1\separ1}])$ through $T(!_{[9]},[\mathtt{0\separ0\separ0\separ1}])$ and $T(!_{[9]},[\mathtt{0\separ0\separ1\separ0}])$.
The reason for this was to prevent the limit construction of the right Kan extension from forgetting the alignments of $T(!_{[8]},[\mathtt{0\separ0\separ1\separ1}])$ that were inconsistent with the alignments of $T(!_{[9]},[\mathtt{0\separ0\separ1\separ1}])$ (see Example \ref{exa:sequence_alignments}). In fact, the reader can check that if we had done so, then we would also have prevented the resulting version of (\ref{eq:arrow_Ran_obstruction}) from being a bijection.
This suggests that whatever method we try to use, the data contained in $T$ tends to prevent arrow (\ref{eq:arrow_Ran_obstruction}) from being a bijection. 

$\triangleright$ \textit{Interpretation:} This last fact actually hides important information about the relatedness of our four individuals. Indeed, the difference between the conclusion of the present example and that of Example \ref{exa:right_kan_extension_data_consistency}, in which we were able to show that a certain canonical arrow was a bijection, informs us that the genetic data of \texttt{Anne}, \texttt{Bob} and \texttt{Craig} are overall rather similar while the genetic data of \texttt{Anne}, \texttt{Craig} and \texttt{Doug} are much more different. This can already be seen in the sizes of the images of the functor $T$ at the segments $(!_{[8]},[\mathtt{0\separ0\separ1\separ1}])$ and $(!_{[9]},[\mathtt{0\separ0\separ1\separ1}])$, which are much larger than the images of $T$ at the other segments (see the table of Example \ref{exa:sequence_alignments}). In fact, the sizes of these images are related to the uncertainty of finding the right alignment for the sequences of \texttt{Craig} and \texttt{Doug}, and the simple fact that $T$ even has an image at the segment $(!_{[9]},[\mathtt{0\separ0\separ1\separ1}])$, while the genetic data of \texttt{Craig} and \texttt{Doug} is only of length 7, tells us that the dynamic programming algorithm is struggling to find an obvious match between the sequences of \texttt{Craig} and \texttt{Doug}.

$\triangleright$ \textit{Conclusion:} We see that the obstruction -- or rather the uncertainty -- resulting from aligning a set of distant DNA sequences is detected by the ability of certain canonical arrows to be isomorphisms or epimorphisms. It is precisely for these reasons that the concepts of chromology and pedigrad become relevant to the study of our main example.
\end{example}

\subsection{Exactly distributive and injective chromologies}\label{ssec:Distributive_and_exactly_distributive_chromologies}
The goal of the present section is to define two canonical classes of chromologies. As usual, we let $(\Omega,\preceq)$ be a pre-ordered set, $b$ be an element in $\Omega$, $A$ be a small category, $\tau$ be an object in $\mathbf{Seg}(\Omega\,|\,n)$ and $\rho:\Delta_{A}(\tau) \Rightarrow \theta$ be a cone in $\mathbf{Seg}(\Omega\,|\,n)$ for some non-negative integer $n$. First, note that the application of the truncation functor $\mathsf{Tr}_b:\mathbf{Seg}(\Omega\,|\,n) \to \mathbf{Set}^{\mathrm{op}}$ on the cone $\rho$ gives rise to a cocone in $\mathbf{Set}$ as follows.
\[
\mathsf{Tr}_b(\rho):\mathsf{Tr}_b\theta \Rightarrow \Delta_{A} \circ\mathsf{Tr}_b(\tau)
\]
The colimit adjoint of this natural transformation in $\mathbf{Set}$ gives us a function as follows.
\begin{equation}\label{eq:epi_mono_factorization_cocone_for_definition_distributive_cones}
\mathsf{colim}_A\mathsf{Tr}_b(\rho):\mathsf{colim}_A\mathsf{Tr}_b\theta \longrightarrow  \mathsf{Tr}_b(\tau)
\end{equation}

\begin{definition}[Exactly distributive cones]\label{def:exactly_distributive_cones}
A cone of the form $\rho:\Delta_{A}(\tau) \Rightarrow \theta$ in $\mathbf{Seg}(\Omega\,|\,n)$  will be said to be \emph{exactly $b$-distributive} if the arrow of (\ref{eq:epi_mono_factorization_cocone_for_definition_distributive_cones}) is an isomorphism in $\mathbf{Set}$.
\end{definition}

\begin{definition}[Injective cones]\label{def:injective_cones}
A cone of the form $\rho:\Delta_{A}(\tau) \Rightarrow \theta$ in $\mathbf{Seg}(\Omega\,|\,n)$  will be said to be \emph{$b$-injective} if the arrow of (\ref{eq:epi_mono_factorization_cocone_for_definition_distributive_cones}) is a monomorphism in $\mathbf{Set}$.
\end{definition}

\begin{example}[Exactly distributive cones]\label{exa:Exactly_distributive_cones}
Let $\Omega$ denote the pre-ordered set $\{0 \leq 1 \leq 2\}$. In this example, we give various instance of exactly distributive cones and injective cones in $\mathbf{Seg}(\Omega)$. Before showing these instances, let us mention that a cone $\Delta(\tau) \Rightarrow \theta$ should be seen as a structure specifying an integration operation from the diagram $\theta$ to the object $\tau$ -- this may be useful to understand what these cones are meant to specify.

First, we can give the following diagram, living in one of the pre-order categories $\mathbf{Seg}(\Omega:t)$ for the obvious topology $t$ of domain $[12]$, as an example of an exactly 1-distributive cone, but also as an example of a 2-injective cone.
\[
\xymatrix@C-30pt@R-15pt{
&&&&&&&&&&&&&(\mathtt{0}&\mathtt{0}&\mathtt{0})&(\mathtt{1}&\mathtt{1})&(\mathtt{1}&\mathtt{1}&\mathtt{1})&(\mathtt{0}&\mathtt{0}&\mathtt{0}&\mathtt{0})\ar[rrd]&&&&&&&&&&&&&\\
(\mathtt{2}\ar@{}@<-2ex>[rrrrrrrrrrr]_{\textrm{\large$(\tau)$}}&\mathtt{2}&\mathtt{2})&(\mathtt{1}&\mathtt{1})&(\mathtt{2}&\mathtt{2}&\mathtt{2})&(\mathtt{2}&\mathtt{2}&\mathtt{2}&\mathtt{2})\ar[rru]\ar[rr]\ar[rrd]&\quad\quad\quad\quad&(\mathtt{0}&\mathtt{0}&\mathtt{0})&(\mathtt{1}&\mathtt{1})&(\mathtt{0}&\mathtt{0}&\mathtt{0})&(\mathtt{2}&\mathtt{2}&\mathtt{2}&\mathtt{2})\ar[rr]&\quad&(\mathtt{0}&\mathtt{0}&\mathtt{0})&(\mathtt{1}&\mathtt{1})&(\mathtt{0}&\mathtt{0}&\mathtt{0})&(\mathtt{0}&\mathtt{0}&\mathtt{0}&\mathtt{0})\\
&&&&&&&&&&&&&(\mathtt{2}&\mathtt{2}&\mathtt{2})&(\mathtt{1}&\mathtt{1})&(\mathtt{0}&\mathtt{0}&\mathtt{0})&(\mathtt{0}&\mathtt{0}&\mathtt{0}&\mathtt{0})\ar[rru]&&&&&&&&&&&&&
}
\]
Here, the idea is that the positions of the nodes of color 1 or above in the middle and right segments of the diagram are the union of the positions of the nodes of color 1 or above in the leftmost segment ($\tau$).
Intuitively, one could imagine to use this type of cone to integrate different alignment methods on the objects of the diagram $\theta$ into a unique one on the object $\tau$.

We now give the following diagram as an example of a exactly 1-distributive cone and a 2-injective cone in the category of quasi-homologous segments $\mathbf{Seg}(\Omega\,|\,12)$.
\[
\xymatrix@C-30pt@R-15pt{
&&&&&&&&&&&&&(\mathtt{0}&\mathtt{0}&\mathtt{0})&(\mathtt{1}&\mathtt{1})&(\mathtt{1}&\mathtt{1}&\mathtt{1})&(\mathtt{0}&\mathtt{0}&\mathtt{0}&\mathtt{0})\ar[rrd]&&&&&&&&&&&&&\\
(\mathtt{2})\ar@{}@<-2ex>[rrrrrrrrrrr]_{\textrm{\large$(\tau)$}}&(\mathtt{2})&(\mathtt{2})&(\mathtt{1}&\mathtt{1})&(\mathtt{2}&\mathtt{2})&(\mathtt{2})&(\mathtt{2}&\mathtt{2})&(\mathtt{2}&\mathtt{2})\ar[rru]\ar[rr]\ar[rrd]&\quad\quad\quad\quad&(\mathtt{0}&\mathtt{0}&\mathtt{0})&(\mathtt{1}&\mathtt{1})&(\mathtt{0}&\mathtt{0}&\mathtt{0})&(\mathtt{2}&\mathtt{2}&\mathtt{2}&\mathtt{2})\ar[rr]&\quad&(\mathtt{0}&\mathtt{0}&\mathtt{0})&(\mathtt{1}&\mathtt{1})&(\mathtt{0}&\mathtt{0}&\mathtt{0})&(\mathtt{0}&\mathtt{0}&\mathtt{0}&\mathtt{0})\\
&&&&&&&&&&&&&(\mathtt{2}&\mathtt{2}&\mathtt{2})&(\mathtt{1}&\mathtt{1})&(\mathtt{0}&\mathtt{0}&\mathtt{0})&(\mathtt{0}&\mathtt{0}&\mathtt{0}&\mathtt{0})\ar[rru]&&&&&&&&&&&&&
}
\]
The difference between the very first cone and the one given above is that the latter specifies an integration operation whose action also applies to a more refined topology (on $\tau$). 
For instance, aligning a set of DNA strands with respect to the codon topology will necessarily align the DNA strands with respect to the nucleotide topology.

Finally, the following arrow in $\mathbf{Seg}(\Omega\,|\,12)$ is an example of an exactly $0$-distributive cone as well as an example of a $1$-injective cone.
\[
\xymatrix@C-30pt{
(\mathtt{1})\ar@{}@<-2ex>[rrrrrrrrrrr]_{\textrm{\large$(\tau)$}}&(\mathtt{1}&\mathtt{1})&(\mathtt{1}&\mathtt{1})&(\mathtt{1}&\mathtt{1})&(\mathtt{1})&(\mathtt{1}&\mathtt{1}&\mathtt{1})&(\mathtt{1})\ar[rr]&\quad\quad\quad&
(\mathtt{0}&\mathtt{0}&\mathtt{0})&(\mathtt{1}&\mathtt{1})&(\mathtt{0}&\mathtt{0}&\mathtt{0})&(\mathtt{1}&\mathtt{1}&\mathtt{1}&\mathtt{1})
}
\]
\end{example}

\begin{definition}[Exactly distributive chromologies]\label{def:exactly_distributive_chromologies}
Let $b$ be an element in $\Omega$. A chromology $(\Omega,D)$ will be said to be \emph{exactly $b$-distributive} if all the cones in $D$ are exactly $b$-distributive.
\end{definition}

\begin{definition}[Injective chromologies]\label{def:injective_chromologies}
Let $b$ be an element in $\Omega$. A chromology $(\Omega,D)$ will be said to be \emph{$b$-injective} if all the cones in $D$ are $b$-injective.
\end{definition}

\subsection{Logical systems for pedigrads in sets}
In this section, we show that the functors defined in Definition \ref{def:set_E_b_varepsilon} are pedigrads in two different logical systems of $\mathbf{Set}$ for two different types of chromologies.

\begin{definition}[Logical systems of bijections]\label{def:logical_systems_cones_iso_limits}
We will denote by $\mathcal{W}^{\textrm{bij}}$ the class of cones $\Delta_{A}(X) \Rightarrow F$ in $\mathbf{Set}$ whose limit adjoints $X \to \mathsf{lim}_{A}F$ are bijections.
\end{definition}

In section \ref{sec:Solving_main_example}, we will show that one can use the following theorem to study the information contained in a sequence alignment functor (Definition \ref{def:sequence_alignment}).

\begin{theorem}\label{theo:E_b_varepsilon_W_iso_pedigrad_exactly_distributive}
For every element $b$ in $\Omega$ and exactly $b$-distributive chromology $(\Omega,D)$, the functor $E_{b}^{\varepsilon}:\mathbf{Seg}(\Omega) \to \mathbf{Set}$ is a $\mathcal{W}^{\textrm{bij}}$-pedigrad for $(\Omega,D)$.
\end{theorem}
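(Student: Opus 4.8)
The plan is to reduce everything to the representability statement of Proposition \ref{prop:pedigrad_representable} and then invoke the elementary fact that a contravariant hom-functor turns colimits into limits. Fix a non-negative integer $n$ and a cone $\rho:\Delta_A(\tau)\Rightarrow\theta$ in $D[n]$; since $(\Omega,D)$ is exactly $b$-distributive, $\rho$ is an exactly $b$-distributive cone, i.e. the colimit adjoint $\mathsf{colim}_A\mathsf{Tr}_b(\rho):\mathsf{colim}_A\mathsf{Tr}_b\theta\to\mathsf{Tr}_b(\tau)$ of (\ref{eq:epi_mono_factorization_cocone_for_definition_distributive_cones}) is an isomorphism in $\mathbf{Set}$. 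I must show that the image cone $E_b^\varepsilon(\rho):\Delta_A(E_b^\varepsilon\tau)\Rightarrow E_b^\varepsilon\circ\theta$ lies in $\mathcal{W}^{\mathrm{bij}}$, that is, that its limit adjoint $E_b^\varepsilon(\tau)\to\mathsf{lim}_A E_b^\varepsilon\circ\theta$ is a bijection. Because $\rho$ lives in the subcategory $\mathbf{Seg}(\Omega\,|\,n)\hookrightarrow\mathbf{Seg}(\Omega)$, Proposition \ref{prop:pedigrad_representable} lets me replace the restriction of $E_b^\varepsilon$ along this inclusion by the naturally isomorphic functor $\mathbf{Set}(\mathsf{Tr}_b(\_),E)$, so it suffices to prove the statement for the cone $\mathbf{Set}(\mathsf{Tr}_b(\rho),E)$ obtained by applying the contravariant hom-functor $\mathbf{Set}(\_,E)$ to the cocone $\mathsf{Tr}_b(\rho)$.

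Next I would factor the colimit adjoint of $\mathsf{Tr}_b(\rho)$ through the universal cocone. Writing $\kappa:\mathsf{Tr}_b\theta\Rightarrow\Delta_A(\mathsf{colim}_A\mathsf{Tr}_b\theta)$ for the colimit injections and $\gamma:=\mathsf{colim}_A\mathsf{Tr}_b(\rho)$ for the (now known to be invertible) colimit adjoint, one has $\mathsf{Tr}_b(\rho)=\Delta_A(\gamma)\circ\kappa$ componentwise. Applying the contravariant functor $\mathbf{Set}(\_,E)$ reverses this into an identity of cones $\mathbf{Set}(\mathsf{Tr}_b(\rho),E)=\mathbf{Set}(\kappa,E)\circ\Delta_A(\mathbf{Set}(\gamma,E))$, where $\mathbf{Set}(\gamma,E):\mathbf{Set}(\mathsf{Tr}_b\tau,E)\to\mathbf{Set}(\mathsf{colim}_A\mathsf{Tr}_b\theta,E)$ is a bijection because $\gamma$ is. Passing to limit adjoints, the limit adjoint of $\mathbf{Set}(\mathsf{Tr}_b(\rho),E)$ equals $\phi\circ\mathbf{Set}(\gamma,E)$, where $\phi:\mathbf{Set}(\mathsf{colim}_A\mathsf{Tr}_b\theta,E)\to\mathsf{lim}_A\mathbf{Set}(\mathsf{Tr}_b\theta,E)$ is the limit adjoint of $\mathbf{Set}(\kappa,E)$. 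The standard fact that the contravariant representable $\mathbf{Set}(\_,E)$ sends colimits in $\mathbf{Set}$ to limits in $\mathbf{Set}$ says precisely that $\phi$ is a bijection; hence $\phi\circ\mathbf{Set}(\gamma,E)$ is a bijection, which is exactly what membership in $\mathcal{W}^{\mathrm{bij}}$ demands.

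Finally, I would verify that this conclusion is insensitive to the auxiliary isomorphism supplied by Proposition \ref{prop:pedigrad_representable}. Since that isomorphism is natural on $\mathbf{Seg}(\Omega\,|\,n)$, it carries the cone $E_b^\varepsilon(\rho)$ to the cone $\mathbf{Set}(\mathsf{Tr}_b(\rho),E)$ and identifies their limit adjoints up to pre- and post-composition with bijections; therefore the limit adjoint of $E_b^\varepsilon(\rho)$ is itself a bijection. As the integer $n$ and the cone $\rho\in D[n]$ were arbitrary, $E_b^\varepsilon$ sends every cone of $D$ into $\mathcal{W}^{\mathrm{bij}}$, which is the assertion that it is a $\mathcal{W}^{\mathrm{bij}}$-pedigrad for $(\Omega,D)$.

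I expect the main obstacle to be the bookkeeping of the second paragraph — pinning down that the canonical comparison arrow of the image cone genuinely factors as $\phi\circ\mathbf{Set}(\gamma,E)$, rather than merely being abstractly isomorphic to some bijection — together with the naturality check of the third paragraph, which is what legitimizes replacing $E_b^\varepsilon$ by $\mathbf{Set}(\mathsf{Tr}_b(\_),E)$ without disturbing the relevant limit adjoint.
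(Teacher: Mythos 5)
Your proposal is correct and follows essentially the same route as the paper's proof: both reduce to $\mathbf{Set}(\mathsf{Tr}_b(\_),E)$ via Proposition \ref{prop:pedigrad_representable}, apply the contravariant hom-functor to the isomorphism $\mathsf{colim}_A\mathsf{Tr}_b\theta\to\mathsf{Tr}_b(\tau)$ supplied by exact $b$-distributivity, and use the fact that $\mathbf{Set}(\_,E)$ carries colimits to limits to identify the result with the limit adjoint of $E_b^\varepsilon(\rho)$. The only difference is that you make explicit the factorization $\phi\circ\mathbf{Set}(\gamma,E)$ and the naturality transfer, which the paper compresses into the phrase ``(naturally) isomorphic to the following canonical arrow.''
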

\begin{proof}
Let $\rho:\Delta_{A}(\tau) \Rightarrow \theta$ be a cone in $D[n_1]$ for some given non-negative integer $n_1$. Because $(\Omega,D)$ is an exactly $b$-distributive chromology, it follows from Definition \ref{def:exactly_distributive_cones} and Definition \ref{def:exactly_distributive_chromologies} that the canonical arrow
\[
\mathsf{colim}_A\mathsf{Tr}_b\theta \to \mathsf{Tr}_b(\tau)
\]
is an isomorphism in $\mathbf{Set}$. As a result, the image of this arrow via the functor $\mathbf{Set}(\_,E):\mathbf{Set}^{\mathrm{op}} \to \mathbf{Set}$ is a bijection. By Proposition \ref{prop:pedigrad_representable} and the usual definition of colimits in $\mathbf{Set}$, the resulting bijection is (naturally) isomorphic to the following canonical arrow.
\[
E_{b}^{\varepsilon}(\tau) \to \mathsf{lim}_A E_{b}^{\varepsilon}\circ \theta
\]
This precisely shows that $E_{b}^{\varepsilon}:\mathbf{Seg}(\Omega) \to \mathbf{Set}$ is a $\mathcal{W}^{\textrm{bij}}$-pedigrad for $(\Omega,D)$.
\end{proof}

\begin{definition}[Logical systems of surjections]\label{def:logical_systems_cones_surj_limits}
We will denote by $\mathcal{W}^{\textrm{surj}}$ the class of cones $\Delta_{A}(X) \Rightarrow F$ in $\mathbf{Set}$ whose limit adjoints $X \to \mathsf{lim}_{A}F$ are surjections.
\end{definition}

\begin{theorem}\label{theo:E_b_varepsilon_W_surj_pedigrad_injective}
For every element $b$ in $\Omega$ and $b$-injective chromology $(\Omega,D)$, the functor $E_{b}^{\varepsilon}:\mathbf{Seg}(\Omega) \to \mathbf{Set}$ is a $\mathcal{W}^{\textrm{surj}}$-pedigrad for $(\Omega,D)$.
\end{theorem}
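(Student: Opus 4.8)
The plan is to run the proof in exact parallel with that of Theorem~\ref{theo:E_b_varepsilon_W_iso_pedigrad_exactly_distributive}, replacing ``isomorphism'' by ``monomorphism'' on the truncation side and ``bijection'' by ``surjection'' on the hom side. First I would fix a non-negative integer $n_1$ and take an arbitrary cone $\rho:\Delta_{A}(\tau) \Rightarrow \theta$ in $D[n_1]$. Since $(\Omega,D)$ is assumed to be $b$-injective, Definition~\ref{def:injective_cones} and Definition~\ref{def:injective_chromologies} guarantee that the canonical arrow of (\ref{eq:epi_mono_factorization_cocone_for_definition_distributive_cones}), namely
\[
\mathsf{colim}_A\mathsf{Tr}_b\theta \longrightarrow \mathsf{Tr}_b(\tau),
\]
is a monomorphism in $\mathbf{Set}$. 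The goal is then to transport this injectivity across the representable functor $\mathbf{Set}(\_,E):\mathbf{Set}^{\mathrm{op}} \to \mathbf{Set}$ and recognize the output, via Proposition~\ref{prop:pedigrad_representable}, as the limit-adjoint arrow $E_{b}^{\varepsilon}(\tau) \to \mathsf{lim}_A E_{b}^{\varepsilon}\circ \theta$ whose surjectivity is precisely what membership in $\mathcal{W}^{\textrm{surj}}$ demands (Definition~\ref{def:logical_systems_cones_surj_limits}).

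The key step is the observation that the contravariant hom-functor $\mathbf{Set}(\_,E)$ sends monomorphisms of sets to surjections. Concretely, for an injection $f:X \hookrightarrow Y$ and any map $h:X \to E$, one recovers a preimage $g:Y \to E$ under precomposition by setting $g = h \circ f^{-1}$ on the image $f(X)$ and sending the remaining elements of $Y$ to the basepoint $\varepsilon$; this is exactly where the \emph{pointedness} of $(E,\varepsilon)$, hence the non-emptiness of $E$, is used. Applying this to the monomorphism above yields a surjection $\mathbf{Set}\big(\mathsf{Tr}_b(\tau),E\big) \twoheadrightarrow \mathbf{Set}\big(\mathsf{colim}_A\mathsf{Tr}_b\theta,E\big)$. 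Because $\mathbf{Set}(\_,E)$ carries the colimit over $A$ to the corresponding limit, the codomain is canonically $\mathsf{lim}_A \mathbf{Set}(\mathsf{Tr}_b\theta,E)$, and Proposition~\ref{prop:pedigrad_representable} identifies the whole arrow naturally with $E_{b}^{\varepsilon}(\tau) \to \mathsf{lim}_A E_{b}^{\varepsilon}\circ \theta$. Since $\rho$ was an arbitrary cone of $D$, this establishes that $E_{b}^{\varepsilon}$ is a $\mathcal{W}^{\textrm{surj}}$-pedigrad for $(\Omega,D)$.

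The main obstacle is the lemma that $\mathbf{Set}(\_,E)$ preserves the epi/mono duality in the required direction, and in particular the role played by the basepoint: were $E$ allowed to be empty, an injection with non-surjective image could fail to induce a surjection on hom-sets, so the argument genuinely relies on the pointed structure carried through from Definition~\ref{def:set_E_b_varepsilon}. The remaining work---that the surjection produced on hom-sets coincides, up to the natural isomorphism of Proposition~\ref{prop:pedigrad_representable}, with the limit-adjoint arrow, and that surjectivity is preserved under this identification---is the same bookkeeping with colimit/limit adjunction and naturality that underlies the proof of Theorem~\ref{theo:E_b_varepsilon_W_iso_pedigrad_exactly_distributive}, and I would treat it identically.
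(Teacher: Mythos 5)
Your proposal is correct and follows essentially the same route as the paper's proof: the same reduction to the monomorphism $\mathsf{colim}_A\mathsf{Tr}_b\theta \to \mathsf{Tr}_b(\tau)$ supplied by $b$-injectivity, the same key lemma that $\mathbf{Set}(\_,E)$ turns monomorphisms into surjections via extension by the basepoint $\varepsilon$, and the same identification through Proposition~\ref{prop:pedigrad_representable} and the colimit-to-limit conversion. Your added remark about why the pointed (hence non-empty) structure of $E$ is genuinely needed is accurate and matches the role the paper assigns to it.
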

\begin{proof}
Before showing the statement, recall that for every monomorphism $m:A \to B$ in $\mathbf{Set}$, the function $\mathbf{Set}(j,E):\mathbf{Set}(B,E) \to \mathbf{Set}(A,E)$ is a surjection. Indeed, because $E$ has a pointed structure, every function $f:A \to E$ can be extended to a function $f':B \to E$ by mapping every $x \in B \backslash A$ to the point $\varepsilon$ of $E$. We can check that the identity $f = f' \circ j$ holds, which amounts to saying that the image of $j$ via the functor $\mathbf{Set}(\_,E):\mathbf{Set}^{\mathrm{op}} \to \mathbf{Set}$ is a surjection.

We now prove the statement. Let $\rho:\Delta_{A}(\tau) \Rightarrow \theta$ be a cone in $D[n_1]$ for some given non-negative integer $n_1$. Because $(\Omega,D)$ is a $b$-injective chromology, it follows from Definition \ref{def:exactly_distributive_cones} and Definition \ref{def:exactly_distributive_chromologies} that the canonical arrow
\[
\mathsf{colim}_A\mathsf{Tr}_b\theta \to \mathsf{Tr}_b(\tau)
\]
is a monomorphism in $\mathbf{Set}$. As a result, the image of this arrow via the functor $\mathbf{Set}(\_,E):\mathbf{Set}^{\mathrm{op}} \to \mathbf{Set}$ is a surjection. By Proposition \ref{prop:pedigrad_representable} and the usual definition of colimits in $\mathbf{Set}$, the resulting surjection is (naturally) isomorphic to the following canonical arrow.
\[
E_{b}^{\varepsilon}(\tau) \to \mathsf{lim}_A E_{b}^{\varepsilon}\circ \theta
\]
This precisely shows that $E_{b}^{\varepsilon}:\mathbf{Seg}(\Omega) \to \mathbf{Set}$ is a $\mathcal{W}^{\textrm{surj}}$-pedigrad for $(\Omega,D)$.
\end{proof}

\begin{example}[Controlling uncertainties via chromologies]\label{exa:uncertainty_vs_chromologies}
The goal of this example is to show that the presence of uncertainties
 discussed at the end of Example \ref{exa:Global-local_seq_alignments} can be controlled through the design of chromologies. We will keep the same notations as those used therein. First, recall that the point of Example \ref{exa:right_kan_extension_data_consistency} was to show that the following exactly $(1,1,1,1)$-distributive cone in $\mathbf{Seg}(\Omega^{\times 4})$ was suitable to define an exactly $(1,1,1,1)$-distributive chromology that would make the right Kan extension $\mathsf{Ran}_{\iota}T:\mathbf{Seg}(\Omega^{\times 4}) \to \mathbf{Set}$ a $\mathcal{W}^{\textrm{bij}}$-pedigrad.
\[
\xymatrix@C-25pt@R-5pt{
&&(!_{[8]},[\mathtt{1\separ1\separ1\separ0}])\ar[dll]\ar[d]\ar[drr]&&\\
(!_{[8]},[\mathtt{1\separ0\separ1\separ0}])\ar[rd]\ar[rrd]|>>>>>>\hole&&(!_{[8]},[\mathtt{0\separ1\separ1\separ0}])\ar[ld]\ar[rd]&&\ar[ld]\ar[lld]|>>>>>>\hole (!_{[8]},[\mathtt{1\separ1\separ0\separ0}])\\
&(!_{[8]},[\mathtt{0\separ0\separ1\separ0}])\quad\quad&(!_{[8]},[\mathtt{1\separ0\separ0\separ0}])&\quad\quad(!_{[8]},[\mathtt{0\separ1\separ0\separ0}])&
}
\]
On the other hand, the point of Example \ref{exa:Global-local_seq_alignments} was to show that not every exactly $(1,1,1,1)$-distributive cone is suitable to make a functor a $\mathcal{W}^{\textrm{bij}}$-pedigrad. In particular, it was shown that the following cone could not make the right Kan extension $\mathsf{Ran}_{\iota}T$ a $\mathcal{W}^{\textrm{bij}}$-pedigrad, but only a $\mathcal{W}^{\textrm{surj}}$-pedigrad
\begin{equation}\label{eq:cone_for_ran_surj_pedigrad}
\xymatrix@C-20pt@R-10pt{
&(!_{[8]},[\mathtt{1\separ0\separ1\separ1}])\ar[ld]\ar[rd]&\\
(!_{[8]},[\mathtt{1\separ0\separ1\separ0}])\ar[rd]&&\ar[ld](!_{[8]},[\mathtt{1\separ0\separ0\separ1}])\\
&(!_{[8]},[\mathtt{1\separ0\separ0\separ0}])&
}
\end{equation}
The reason for this obstruction was that the genetic data of \texttt{Craig} was much different from that of \texttt{Doug}. As a result, the image of the functor $T:B \to \mathbf{Set}$ at the segment $(!_{[9]},[\mathtt{0\separ0\separ1\separ1}])$ was non-trivial and thus prevented the canonical limit arrow associated with the previous cone from being a bijection. At least, knowing that it is a surjection tells us that there is no inconsistencies between the table of \texttt{Anne} and \texttt{Craig} and that of \texttt{Anne} and \texttt{Doug}

Note that the obstruction associated with (\ref{eq:cone_for_ran_surj_pedigrad}) to make $\mathsf{Ran}_{\iota}T$ a $\mathcal{W}^{\textrm{bij}}$-pedigrad could be reduced if we could prevent the objects of (\ref{eq:cone_for_ran_surj_pedigrad}) from going to segments of $B$ that are associated with this so-called uncertainty. This would prevent the limit construction of 
$\mathsf{Ran}_{\iota}T$ from considering too many images of $T$ in its computation. 

A way to do so could be to change the topology of the segments on which the functor $T:B \to \mathbf{Set}$ is defined. Of course, being able to do so would mean that we either know more about our problem or that we make an assumption about our four individuals. For example, we could consider the situation in which one decides to parenthesize all the adjacent matches that appear in the pairwise sequence alignments of $T$ together. Here is an illustration. First, recall that the image of $T(!_{[9]},[\mathtt{0\separ0\separ1\separ1}])$ was taken to be as follows (see Example \ref{exa:sequence_alignments}).

\[
T(!_{[9]},[\mathtt{0\separ0\separ1\separ1}]) = \left\{
\begin{array}{l}\!\!\!\!\!\!
\begin{array}{l}
\mathtt{ACCGT\varepsilon C\varepsilon A}\\
\mathtt{A\varepsilon C\varepsilon TACTG}
\end{array}\!;\,
\begin{array}{l}
\mathtt{AC\varepsilon \varepsilon CGTCA}\\
\mathtt{ACTAC\varepsilon T\varepsilon G}
\end{array}\!;\,
\begin{array}{l}
\mathtt{ACCGT\varepsilon CA\varepsilon }\\
\mathtt{A\varepsilon C\varepsilon TACTG}
\end{array}\!;\,
\begin{array}{l}
\mathtt{AC\varepsilon \varepsilon CGTCA}\\
\mathtt{ACTAC\varepsilon TG\varepsilon }
\end{array}\!\!\!\!\!\!
\end{array}
\right\}
\]
If we now parenthesize the matching and mismatching patches of maximal lengths in the sequence alignments of $T(!_{[9]},[\mathtt{0\separ0\separ1\separ1}])$, we obtain the parenthesized sequence alignments shown in the following table. 
\[
\begin{array}{|ll|}
\hline
\multicolumn{2}{|c|}{\cellcolor[gray]{0.8}{T(!_{[9]},[\mathtt{0\separ0\separ1\separ1}])}}\\
\hline
T(t_1,[\mathtt{0\separ0\separ1\separ1}]) = &
\left\{
\begin{array}{l}
\mathtt{(A)(C)(C)(G)(T)(\varepsilon) (C)(\varepsilon A)}\\
\mathtt{(A)(\varepsilon) (C)(\varepsilon)(T)(A)(C)(TG)}
\end{array}\!,\,
\begin{array}{l}
\mathtt{(A)(C)(C)(G)(T)(\varepsilon) (C)(A\varepsilon) }\\
\mathtt{(A)(\varepsilon) (C)(\varepsilon) (T)(A)(C)(TG)}
\end{array}
\right\}\\
T(t_2,[\mathtt{0\separ0\separ1\separ1}]) =&
\left\{\begin{array}{l}
\mathtt{(AC)(\varepsilon \varepsilon) (C)(G)(T)(CA)}\\
\mathtt{(AC)(TA)(C)(\varepsilon) (T)(\varepsilon G)}
\end{array}\!,\,
\begin{array}{l}
\mathtt{(AC)(\varepsilon \varepsilon) (C)(G)(T)(CA)}\\
\mathtt{(AC)(TA)(C)(\varepsilon) (T)(G\varepsilon) }
\end{array}
\right\}\\
\hline
\end{array}
\]
Because this bracketing suggests the use of two new topologies, we want to stay consistent with the definition of $T$ and associate the previous alignments with the two segments of non-terminal topologies shown on the left-hand side of the previous table. Since a segment of trivial topology $!_{[8]}:[8] \to [1]$ cannot be mapped to segments of non-terminal topologies, the domain of the canonical arrow
\begin{equation}\label{eq:ran_surjection_to_bijection}
\mathsf{Ran}_{\iota}T(!_{[8]},[\mathtt{1\separ0\separ1\separ1}]) \to \mathsf{Ran}_{\iota}T \circ F,
\end{equation}
where the diagram $F$ is the lower cospan of diagram (\ref{eq:cone_for_ran_surj_pedigrad}), will not contain the sets $T(t_1,[\mathtt{0\separ0\separ1\separ1}])$ and $T(t_2,[\mathtt{0\separ0\separ1\separ1}])$ (and the term $T(!_{[9]},[\mathtt{0\separ0\separ1\separ1}])^{\times 9}$ no longer appears in the domain). This means that arrow (\ref{eq:ran_surjection_to_bijection}) is more likely to be a bijection of sets, hence making diagram (\ref{eq:cone_for_ran_surj_pedigrad}) more likely to be suitable for the definition of a chromology that makes $\mathsf{Ran}_{\iota}T$ a $\mathcal{W}^{\textrm{bij}}$-pedigrad.

We thus conclude that the process of looking for chromologies (or, in fact, their cones), given a set of possible functors $T:B \to \mathbf{Set}$, can be seen as a way of isolating uncertainties and hence producing a refined analysis of the genetic data of \texttt{Anne}, \texttt{Bob}, \texttt{Craig} and \texttt{Doug}.
\end{example}

\section{Solving our problem and identifying mechanisms}\label{sec:Solving_main_example}

In this section, we formalize what should be seen as the categorical answer of the problem exposed in section \ref{ssec:Main_example}, namely a method to assess the validity of the multiple sequence alignments computed by a right Kan extension of a sequence alignment functor. More specifically, we show that chromologies give a way to select multiple sequence alignments with respect to various types of mechanisms. As usual, we will let $(E,\varepsilon)$ denote a pointed set and $(\Omega,\preceq)$ be a pre-ordered set. 

\subsection{Link between right Kan extensions and pedigrads}
In this section, we recall the definition of the unit associated with a right Kan extension and relate this unit to the concepts of pedigrad and chromology. 

\begin{convention}[Notations]
For every small category $C$, we will denote by $[C,\mathbf{Set}]$ the category whose objects are functors from $C$ to $\mathbf{Set}$ and whose arrows are natural transformations. 
\end{convention}

The following proposition states that a right Kan extension in $\mathbf{Set}$ along a certain functor $\iota$ (Definition \ref{def:right_kan_extensions}) is a right adjoint for the pre-composition functor induced by the functor $\iota$ -- see \cite[Chapter X]{MacLane} for more detail.

\begin{theorem}[\cite{MacLane}]\label{theo:RKE_right_adjoint}
Let $(\Omega,\preceq)$ be a pre-ordered set and $\iota:B \to \mathbf{Seg}(\Omega)$ be a functor. The right Kan extension operation $\mathsf{Ran}_{\iota}$ induces a functor $[B,\mathbf{Set}] \to [\mathbf{Seg}(\Omega),\mathbf{Set}]$ that maps any natural transformation $\sigma:A \Rightarrow B$ in $\mathbf{Set}$ over $B$ to the canonical natural transformation (\ref{eq:alignment_with_Ran}) induced by the limit construction of Definition \ref{def:right_kan_extensions}.
\begin{equation}\label{eq:alignment_with_Ran}
\mathsf{Ran}_{\iota}\sigma:\mathsf{lim}_{({\tau \downarrow \iota})} A \circ \iota_{\tau}\Rightarrow \mathsf{lim}_{({\tau \downarrow \iota})} B \circ \iota_{\tau}
\end{equation}
The resulting functor $\mathsf{Ran}_{\iota}:[B,\mathbf{Set}] \to [\mathbf{Seg}(\Omega),\mathbf{Set}]$ is a right adjoint for the pre-composition functor induced by $\iota$.
\end{theorem}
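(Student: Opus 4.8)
The plan is to prove the two assertions in turn: first that $\mathsf{Ran}_{\iota}$ is a genuine functor $[B,\mathbf{Set}] \to [\mathbf{Seg}(\Omega),\mathbf{Set}]$, and then that it is right adjoint to the pre-composition functor $\iota^{*}:[\mathbf{Seg}(\Omega),\mathbf{Set}] \to [B,\mathbf{Set}]$ sending $G \mapsto G \circ \iota$. Throughout, the essential input is that $\mathbf{Set}$ is complete, so that every pointwise limit of Definition \ref{def:right_kan_extensions} exists and the construction $\mathsf{lim}_{({\tau\downarrow\iota})}(-)$ is itself a functor. The functoriality of $\mathsf{Ran}_{\iota}$ then follows formally: a natural transformation $\sigma:T \Rightarrow T'$ in $[B,\mathbf{Set}]$ pre-composes with $\iota_{\tau}$ to give a natural transformation $T\circ\iota_{\tau} \Rightarrow T'\circ\iota_{\tau}$ over $({\tau\downarrow\iota})$, and applying $\mathsf{lim}_{({\tau\downarrow\iota})}$ yields the component of (\ref{eq:alignment_with_Ran}) at $\tau$; naturality of this family in $\tau$ is exactly the compatibility with the reindexing functors $h^{*}:({\tau'\downarrow\iota}) \to ({\tau\downarrow\iota})$ recorded in Remark \ref{rem:functoriality_RKE}, while preservation of identities and composites is inherited from $\mathsf{lim}$.

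For the adjunction, I would exhibit a bijection
\[
[\mathbf{Seg}(\Omega),\mathbf{Set}]\big(G, \mathsf{Ran}_{\iota}T\big) \;\cong\; [B,\mathbf{Set}]\big(G\circ\iota, T\big)
\]
natural in $G$ and $T$. Given $\psi:G\circ\iota \Rightarrow T$, I build its transpose $\psi^{\sharp}:G \Rightarrow \mathsf{Ran}_{\iota}T$ objectwise, using the universal property of the defining limit. At an object $\tau$, for every $(\upsilon,f)$ in $({\tau\downarrow\iota})$ I form the composite
\[
G(\tau)\xrightarrow{\ G(f)\ }G(\iota(\upsilon))=(G\circ\iota)(\upsilon)\xrightarrow{\ \psi_{\upsilon}\ }T(\upsilon),
\]
and I claim these assemble into a cone over $T\circ\iota_{\tau}$. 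The cone condition for an arrow $g:(\upsilon,f)\to(\upsilon',f')$ of $({\tau\downarrow\iota})$ reduces, via naturality of $\psi$ and the defining identity $\iota(g)\circ f = f'$, to the chain $T(g)\circ\psi_{\upsilon}\circ G(f)=\psi_{\upsilon'}\circ G(\iota(g))\circ G(f)=\psi_{\upsilon'}\circ G(f')$; the universal property then produces a unique map $G(\tau)\to \mathsf{Ran}_{\iota}T(\tau)$. The inverse transpose sends $\phi:G \Rightarrow \mathsf{Ran}_{\iota}T$ to $\varepsilon_{T}\circ(\phi\iota)$, where the counit component $\varepsilon_{T,\upsilon}:\mathsf{Ran}_{\iota}T(\iota(\upsilon))\to T(\upsilon)$ is the limit projection indexed by the object $(\upsilon,\mathrm{id}_{\iota(\upsilon)})$ of $({\iota(\upsilon)\downarrow\iota})$.

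It then remains to check that $\psi\mapsto\psi^{\sharp}$ and $\phi\mapsto\varepsilon_{T}\circ(\phi\iota)$ are mutually inverse and that the bijection is natural in both variables. Both verifications are diagram chases: one computes that $\psi^{\sharp}$ projects back to $\psi$ under the object $(\upsilon,\mathrm{id})$, and that an arbitrary $\phi$ is recovered from the cone it already induces, using uniqueness in the universal property. I expect the main obstacle to be not any single equation but the bookkeeping that $\psi^{\sharp}$ is genuinely \emph{natural} in $\tau$ rather than merely a family of maps: this is where one must invoke the naturality of $\iota_{\tau}$ in $\tau$ (Remark \ref{rem:functoriality_RKE}) and the compatibility of the reindexing $h^{*}$ with the cones defining $\psi^{\sharp}$, so that the square relating $\psi^{\sharp}_{\tau}$ and $\psi^{\sharp}_{\tau'}$ commutes. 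Since all of this is the standard pointwise description of Kan extensions in a complete category \cite[Chap. X, Th. 1 and its corollaries]{MacLane}, the argument closes without further hypotheses.
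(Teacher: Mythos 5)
Your proof is correct and is precisely the standard pointwise-limit argument for right Kan extensions that the paper itself does not reproduce: the theorem is stated with a citation to Mac Lane (Chapter X) and no proof is given in the text. Your construction of the transpose via the cone $\big(\psi_{\upsilon}\circ G(f)\big)_{(\upsilon,f)}$, the counit as the limit projection at $(\upsilon,\mathrm{id}_{\iota(\upsilon)})$, and the naturality check through the reindexing functors $h^{*}$ of Remark \ref{rem:functoriality_RKE} match the cited reference exactly, so there is nothing to correct.
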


\begin{remark}[Units of right Kan extensions]\label{rem:units_RKE}
The present remark reminds the reader about the form of the unit associated with the adjunction described in Theorem \ref{theo:RKE_right_adjoint}.
Let $(\Omega,\preceq)$ be a pre-ordered set and $\iota:B \to \mathbf{Seg}(\Omega)$ be a functor. As a right adjoint of the pre-composition functor 
\[
\_ \circ \iota: [\mathbf{Seg}(\Omega),\mathbf{Set}] \to [B,\mathbf{Set}],
\]
the functor $\mathsf{Ran}_{\iota}$ is associated with a natural transformation, called the \emph{unit}, of the form shown in (\ref{eq:alignment_with_Ran_unit}) for every functor $P:\mathbf{Seg}(\Omega) \to \mathbf{Set}$.
\begin{equation}\label{eq:alignment_with_Ran_unit}
\eta:P \Rightarrow \mathsf{Ran}_{\iota}(P \circ \iota)
\end{equation}
The components of (\ref{eq:alignment_with_Ran_unit}) correspond to the canonical arrows associated with the limit construction of Definition \ref{def:right_kan_extensions}. This means that the evaluation of the previous natural transformation at an object $\tau$ in $\mathbf{Seg}(\Omega)$, as shown below in (\ref{eq:comparing_pedigrads_unit}), is the limit adjoint arrow for the cone described in Remark \ref{rem:Extending_category_as_a_cone}.
\begin{equation}\label{eq:comparing_pedigrads_unit}
P(\tau) \to \mathsf{lim}_{({\tau \downarrow \iota})}P \circ \iota_{\tau} \circ \iota
\end{equation}
\end{remark}

\begin{remark}[Right Kan extensions and pedigrads]\label{rem:RKE_and_pedigrads}
The present remark extends Remark \ref{rem:units_RKE} and shows how right Kan extensions and pedigrads are related.
First, note that the arrow shown in (\ref{eq:comparing_pedigrads_unit}) looks a lot like the type of arrow used in Definition \ref{def:logical_systems_cones_iso_limits} and Definition \ref{def:logical_systems_cones_surj_limits} to define  $\mathcal{W}^{\textrm{bij}}$- and $\mathcal{W}^{\textrm{surj}}$-pedigrads. To make this more precise, let us denote by $\rho_{\iota}[\tau]$ the cone defined in Remark \ref{rem:Extending_category_as_a_cone}, that is to say the obvious natural transformation
\[
\Delta_{({\tau \downarrow \iota})}(\tau)\Rightarrow  \iota_{\tau} \circ \iota
\]
induced by the objects of the category $({\tau \downarrow \iota})$ over the diagram formed by its arrows. Let us also suppose that the cone $\rho_i[\tau]$ is part of a chromology $(\Omega,D)$. In this case, we can notice two facts:
\begin{itemize}
\item[-] if $P$ is a $\mathcal{W}^{\textrm{bij}}$-pedigrad for $(\Omega,D)$, then arrow (\ref{eq:comparing_pedigrads_unit}) is a bijection. 
\item[-] if $P$ is a $\mathcal{W}^{\textrm{surj}}$-pedigrad for $(\Omega,D)$, then arrow (\ref{eq:comparing_pedigrads_unit}) is a surjection. 
\end{itemize}
Later on, we will use these two facts with some functor $P$ that is the composition of functors of the form described in Definition \ref{def:set_E_b_varepsilon} and Example \ref{exa:preparation_example} (for instance, see Convention \ref{conv:E_Seg_f_i}). In particular, if the pedigrad $P$ can be written as a composite $Q \circ R$ where $Q$ is a more canonical pedigrad, then we will prefer to apply the previous two points to $Q$, namely by looking at whether the image of the cone $\rho_{\iota}[\tau]$ via $R$ is in the chromology of $Q$.
\end{remark}

\subsection{Slices of a sequence alignment}\label{sec:slices_seq_alignment}
In this section, we define the concept of slice for a sequence alignment functor. This concept will later be used to reason about the possible mutation mechanisms contained in a sequence alignment functor. Throughout this section, we shall let $\mathbf{A}$ be an alignment specification of the form $\{f_i:(\Omega,\preceq) \to (\Omega_i,\preceq_i)\}_{i \in A}$.

\begin{remark}[Projection maps]\label{rem:aligned_pedigrad_projections}
Let $(E,\varepsilon)$ be a pointed set. For every element $b$ in $\Omega$, the product structure of $\mathbf{A}E_b^{\epsilon}$ (see Definition \ref{def:Aligned_pedigrads}) gives us the following natural projection for every $i \in A$.
\[
\kappa_i:\mathbf{A}E_{b}^{\varepsilon} \Rightarrow E_{f_i(b)}^{\varepsilon} \circ \mathbf{Seg}(f_i)
\]
This arrow, living in $[\mathbf{Seg}(\Omega),\mathbf{Set}]$, will repeatedly be used throughout this section.
\end{remark}

\begin{convention}[Notation]\label{conv:E_Seg_f_i}
For the sake of convenience, for every element $i \in A$ and element $b \in \Omega$, we will let $f_i^{*}E_{b}^{\varepsilon}$ denote the composite functor $E_{f_i(b)}^{\varepsilon} \circ \mathbf{Seg}(f_i):\mathbf{Seg}(\Omega)\to\mathbf{Set}$.  
\end{convention}

In Example \ref{exa:Global-local_seq_alignments}, we saw that, even for  exactly distributive chromologies (Definition \ref{def:exactly_distributive_chromologies}), the right Kan extension of a sequence alignment functor is not necessarily a $\mathcal{W}^{\textrm{bij}}$-pedigrad. On the other hand, the functor of Definition \ref{def:set_E_b_varepsilon} was shown to be a $\mathcal{W}^{\mathrm{bij}}$-pedigrad for any such cone (Theorem \ref{theo:E_b_varepsilon_W_iso_pedigrad_exactly_distributive}). The idea of Definition \ref{def:Slices}, given below, is to compare a non-pedigrad object to a pedigrad object in order to detect the pieces of pedigradic information that would live in the non-pedigrad object.

\begin{definition}[Slices]\label{def:Slices}
Let $b$ be an element in $\Omega$ and $(\iota, T,\sigma)$ be a sequence alignment functor over $\mathbf{A}E_{b}^{\varepsilon}$. For every element $i \in A$, we will speak of the \emph{$i$-slice} of $(\iota, T,\sigma)$ to refer to the pullback arrow $\eta^{*}_i:[T/\mathbf{A}E_{b}^{\varepsilon}]_i \Rightarrow \mathsf{Ran}_{\iota}T$ of the unit of the right Kan extension at the functor $f_i^{*}E_{b}^{\varepsilon}$ along the natural transformation $\mathsf{Ran}_{\iota}(\kappa_i \circ \sigma)$ (as shown below).
\begin{equation}\label{eq:pullback_sequence_alignment}
\xymatrix{
[T/\mathbf{A}E_{b}^{\varepsilon}]_i\ar@{}[rd]|<<<{\rotatebox[origin=c]{90}{\huge{\textrm{$\llcorner$}}}}\ar@{=>}[d]_{\eta^{\ast}_i}\ar@{=>}[rr]&&f_i^{*}E_{b}^{\varepsilon}\ar@{=>}[d]^{\eta}\\
\mathsf{Ran}_{\iota}T \ar@{=>}[r]_-{\mathsf{Ran}_{\iota}\sigma}&\mathsf{Ran}_{\iota}(\mathbf{A}E_{b}^{\varepsilon} \circ \iota)\ar@{=>}[r]_-{\mathsf{Ran}_{\iota}\kappa_i}&\mathsf{Ran}_{\iota}(f_i^{*}E_{b}^{\varepsilon} \circ \iota)
}
\end{equation}
\end{definition}

\begin{remark}[General \emph{versus} individual slices]
Throughout the present section, the reader may wonder why we only consider the pullback of $\eta$ along a natural transformation of the form $\mathsf{Ran}_{\iota}(\kappa_i \circ \sigma)$ while the pullback of $\eta$ along the natural transformation $\mathsf{Ran}_{\iota}(\sigma)$ is left out. The reason is that the former integrates the data with respect to a unique `individual' $i\in A$ while the latter consider all `individuals' in $A$. As a result, the latter will contain very few elements, if any. The algorithm proposed in Remark \ref{rem:finding_right_seq_alignment} tries to maximize both the size of the integrated data and the number of individuals considered.
\end{remark}


\begin{remark}[Data integration along cones]\label{rem:resolving_uncertainties}
The idea behind the slice of a sequence alignment functor is to select the multiple sequence alignments of $\mathsf{Ran}_{\iota}T$ for which the type of uncertainty described in Example \ref{exa:Global-local_seq_alignments} and Example \ref{exa:uncertainty_vs_chromologies} can be resolved from the point of view of a particular individual. While Example \ref{exa:lifting_gluings_of_tables} will illustrate how this type of uncertainty can be eliminated, the present remark shows how the cones of a chromology inform us of the ways via which we can safely integrate the data through slices.

Let $b$ be an element in $\Omega$, $(\iota, T,\sigma)$ be a sequence alignment functor over $\mathbf{A}E_{b}^{\varepsilon}$ and $\tau$ be an object in $\mathbf{Seg}(\Omega)$. For every element $i \in A$, evaluating the $i$-slice of $(\iota, T,\sigma)$ at $\tau$ gives us a pullback square in $\mathbf{Set}$ as follows.
\begin{equation}\label{eq:pullback_sequence_alignment_detail}
\xymatrix@C+15pt{
[T/\mathbf{A}E_{b}^{\varepsilon}]_i(\tau)\ar@{}[rd]|<<<{\rotatebox[origin=c]{90}{\huge{\textrm{$\llcorner$}}}}\ar[d]\ar[r]^-{\sigma^{*}_{\tau}}&f_i^{*}E_{b}^{\varepsilon}(\tau)\ar[d]^{\eta_{\tau}}\\
\mathsf{lim}_{({\tau \downarrow \iota})}T(\tau) \ar[r]_-{\mathsf{Ran}_{\iota}(\kappa_i \circ \sigma)_{\tau}}&*+!L(.4){\mathsf{lim}_{({\tau \downarrow \iota})}f_i^{*}E_{b}^{\varepsilon} \circ \iota_{\tau} \circ \iota}
}
\end{equation}
Now, recall that, by universal property of pullbacks, the pullback of an isomorphism (resp. a surjection) is an isomorphism (resp. a surjection). By Remark \ref{rem:RKE_and_pedigrads}, this means that if
\begin{itemize}
\item[-] the image of the cone $\rho_{\iota}[\tau]$ via $\mathbf{Seg}(f_i)$ is in the chromology of $E_{f_i(b)}^{\varepsilon}$, and
\item[-] the functor $E_{f_i(b)}^{\varepsilon}$ is a $\mathcal{W}^{\textrm{bij}}$-pedigrad (resp. $\mathcal{W}^{\textrm{surj}}$-pedigrad),
\end{itemize}
then the leftmost vertical arrow of diagram (\ref{eq:pullback_sequence_alignment_detail}) is an isomorphism (resp. a surjection). In this sense, we would like to say that all the gluings computed by $\mathsf{Ran}_{\iota}T$ make sense (resp. make sense up to some uncertainty) from the point of view of $f_i$, for they can be lifted to the pullback $[T/\mathbf{A}E_{b}^{\varepsilon}]_i(\tau)$. 

Thus, the role of the functor $[T/\mathbf{A}E_{b}^{\varepsilon}]_i$ is to take care of selecting all those multiple sequence alignments generated by $T$ that make sense with the component $f_i:(\Omega,\preceq) \to (\Omega,\preceq_i)$, where the idea of ``making sense'' is strongly related to the pedigrad structure of the functor $$E_{f_i(b)}^{\varepsilon}:\mathbf{Seg}(\Omega_i) \to \mathbf{Set}.$$ In Example \ref{exa:lifting_gluings_of_tables}, we show that something more subtle happens when the cone $\mathbf{Seg}(f_i) \rho_{\iota}[\tau]$ is not in the chromology of $E_{f_i(b)}^{\varepsilon}$.
\end{remark}

\begin{example}[Resolving uncertainties]\label{exa:lifting_gluings_of_tables}
Let $(\iota,T,\sigma)$ be the sequence alignment constructed in Example \ref{exa:sequence_alignments}. The present example discusses the meaning of the elements contained in the image $\mathsf{Ran}_{\iota}T(!_{[8]},[\mathtt{1\separ0\separ1\separ1}])$, which we computed in Example \ref{exa:Global-local_seq_alignments}, from the point of view of slices. We shall consider the same notations as those used in Example \ref{exa:Global-local_seq_alignments}, but, for convenience, we will denote the segment $(!_{[8]},[\mathtt{1\separ0\separ1\separ1}])$ of $\mathbf{Seg}(\Omega^{\times 4})$ as $\tau$. In this case, the evaluation of diagram (\ref{eq:pullback_sequence_alignment}) at $\tau$ is of the following form for every $i \in \{\mathtt{a},\mathtt{b},\mathtt{c},\mathtt{d}\}$.
\begin{equation}\label{eq:pullback-slice_exception_resolving _uncertainties}
\xymatrix@C+15pt{
[T/\mathbf{A}E_{b}^{\varepsilon}]_i(\tau)\ar[d]\ar[rr]^-{\sigma^{*}_{\tau}}&&\pi_i^{*}E_{b}^{\varepsilon}(\tau)\ar[d]^{\eta_{\tau}}\\
*+!R(.0){L_8([\mathtt{1\separ0\separ1\separ1}]) \times T(!_{[9]},[\mathtt{0\separ0\separ1\separ1}])^{\times 9}}  \ar[r]_-{\mathsf{Ran}_{\iota}(\sigma)_{\tau}}&\mathsf{Ran}_{\iota}(\mathbf{A}E_{b}^{\varepsilon} \circ \iota)(\tau)\ar[r]_-{\mathsf{Ran}_{\iota}(\kappa_i)_{\tau}}&\mathsf{Ran}_{\iota}(\pi_i^{*}E_{b}^{\varepsilon} \circ \iota)(\tau)
}
\end{equation}
We are going to illustrate a case in which the 9-fold Cartesian product 
\begin{equation}\label{eq:red_flag_9-fld_Cartesian_product}
T(!_{[9]},[\mathtt{0\separ0\separ1\separ1}])^{\times 9}
\end{equation}
appearing in the left-bottom corner of (\ref{eq:pullback-slice_exception_resolving _uncertainties}) (\emph{i.e} the object $\mathsf{Ran}_{\iota}T(\tau)$) prevents pullback (\ref{eq:pullback-slice_exception_resolving _uncertainties}) from lifting any element in $\mathsf{Ran}_{\iota}T(\tau)$ to the $i$-slice $[T/\mathbf{A}E_{b}^{\varepsilon}]_i(\tau)$. The reason for this obstruction is that object (\ref{eq:red_flag_9-fld_Cartesian_product}) contains tuples that fail to match the type of tuples associated with the images of the function $\eta_{\tau}$ (given on the right of diagram (\ref{eq:pullback-slice_exception_resolving _uncertainties})).

Before discussing the aforementioned obstruction, let us describe the form of the mappings associated with the arrows of diagram (\ref{eq:pullback-slice_exception_resolving _uncertainties}) in more detail (see diagram (\ref{eq:description_slice_elements_uncertaintites}), below, for future reference). First, recall that every copy $T(!_{[9]},[\mathtt{0\separ0\separ1\separ1}])$ of the $9$-fold Cartesian product given in (\ref{eq:red_flag_9-fld_Cartesian_product}) is associated with one of the $9$ morphisms of the form $(!_{[8]},[\mathtt{1\separ0\separ1\separ1}]) \to (!_{[9]},[\mathtt{0\separ0\separ1\separ1}])$ in $\mathbf{Seg}(\{0,1\}^{\times 4})$ (see examples below).
\[
\xymatrix@C-30pt{(\mathop{\bullet}\limits^{1}&\mathop{\bullet}\limits^{2}&\mathop{\bullet}\limits^{3}&\mathop{\bullet}\limits^{4}&\mathop{\bullet}\limits^{5}&\mathop{\bullet}\limits^{6}&\mathop{\bullet}\limits^{7})}
\longrightarrow
\xymatrix@C-30pt{(\mathop{\bullet}\limits^{\ast}&\mathop{\bullet}\limits^{1}&\mathop{\bullet}\limits^{2}&\mathop{\bullet}\limits^{3}&\mathop{\bullet}\limits^{4}&\mathop{\bullet}\limits^{5}&\mathop{\bullet}\limits^{6}&\mathop{\bullet}\limits^{7})},
\quad\quad
\xymatrix@C-30pt{(\mathop{\bullet}\limits^{1}&\mathop{\bullet}\limits^{2}&\mathop{\bullet}\limits^{3}&\mathop{\bullet}\limits^{4}&\mathop{\bullet}\limits^{5}&\mathop{\bullet}\limits^{6}&\mathop{\bullet}\limits^{7})}
\longrightarrow
\xymatrix@C-30pt{(\mathop{\bullet}\limits^{1}&\mathop{\bullet}\limits^{\ast}&\mathop{\bullet}\limits^{2}&\mathop{\bullet}\limits^{3}&\mathop{\bullet}\limits^{4}&\mathop{\bullet}\limits^{5}&\mathop{\bullet}\limits^{6}&\mathop{\bullet}\limits^{7})},
\quad\quad
\textrm{etc.}
\]
Let us denote these morphisms as $g_k:(!_{[8]},[\mathtt{1\separ0\separ1\separ1}]) \to (!_{[9]},[\mathtt{0\separ0\separ1\separ1}])$ for every $k \in [9]$. The leftmost horizontal arrow given at the bottom of (\ref{eq:pullback-slice_exception_resolving _uncertainties}) then sends a pair $(x,y)$, where $x$ is an element of $L_8([\mathtt{1\separ0\separ1\separ1}])$ and $y = (y_1,\dots,y_9)$ is an element of the 9-fold Cartesian product of $T(!_{[9]},[\mathtt{0\separ0\separ1\separ1}])$, to the same tuple $(x,y)$ in $\mathsf{Ran}_{\iota}(\mathbf{A}E_{b}^{\varepsilon} \circ \iota)(\tau)$ provided that one sees $x$ and $y_1,\dots,y_9$ as elements taken from the images of $\mathbf{A}E_{b}^{\varepsilon}$ (see Definition \ref{def:sequence_alignment}). Then, the tuple $(x,y)$ in $\mathsf{Ran}_{\iota}(\mathbf{A}E_{b}^{\varepsilon} \circ \iota)(\tau)$ is sent to a tuple of the form 
$
(x_i,(y_{1,i},\dots,y_{9,i}))
$
made of the $i$-th projections of $x$ and $y_1,\dots,y_9$ with respect to the 4-fold product structure underlying the definition of $\mathbf{A}E_{b}^{\varepsilon}$. It follows that the tuple $(x,y)$ living in the left-bottom corner of (\ref{eq:pullback-slice_exception_resolving _uncertainties}) is lifted to the left-top corner of (\ref{eq:pullback-slice_exception_resolving _uncertainties}) if there exists an element $z$ in $\pi_i^{*}E_{b}^{\varepsilon}(\tau)$ whose image through $\eta_{\tau}$ is equal to the image $\mathsf{Ran}_{\iota}(\kappa_i)_{\tau}(x,y)$. 
\begin{equation}\label{eq:description_slice_elements_uncertaintites}
\xymatrix{
((x,y),z)\ar@{|-->}[rr]\ar@{|-->}[d]&&z\ar@{|->}[d]^{\eta_{\tau}}\\
(x,y)\ar@{|->}[r]^{\mathsf{Ran}_{\iota}(\sigma)_{\tau}}&(x,y)\ar@{|->}[r]^-{\mathsf{Ran}_{\iota}(\kappa_i)_{\tau}}&*+!L(.7){(x_i,(y_{1,i},\dots,y_{9,i}))}
}
\end{equation}
Let us now focus on a particular example. Let the index $i$ be equal to the element $\mathtt{c}$ representing \texttt{Craig}'s viewpoint. If we take $z$ to be the element $\mathtt{ACCGTC\varepsilon A}$ in $\pi_{\mathtt{c}}^{*}E_{b}^{\varepsilon}(!_{[8]},[\mathtt{1\separ0\separ1\separ1}])$, then its image through $\eta_{\tau}$ is of the following form.
\begin{equation}\label{eq:image_through_eta_tau_resolving}
\left(
\begin{array}{l}
\mathtt{ACCGTC\varepsilon A}
\end{array}\!,\,\left(
\begin{array}{l}
\mathtt{\varepsilon ACCGTC\varepsilon A}
\end{array}\!,\,
\begin{array}{l}
\mathtt{A\varepsilon CCGTC\varepsilon A}
\end{array}\!,\, \dots,
\begin{array}{l}
\mathtt{ACCGTC\varepsilon A\varepsilon}
\end{array}\right)
\right)
\end{equation}
On the other hand, the following element 
\begin{equation}\label{exa:element_x_y}
\left(
\begin{array}{l}
\begin{array}{l}
\mathtt{ACCGACTG}\\
\mathtt{ACCGTC\varepsilon A}\\
\mathtt{A\varepsilon CTACTG }
\end{array}\!,\,
\left(
\begin{array}{l}
\mathtt{ACCGT\varepsilon C\varepsilon A}\\
\mathtt{A\varepsilon C\varepsilon TACTG }
\end{array}\!,\,
\begin{array}{l}
\mathtt{ACCGT\varepsilon C\varepsilon A}\\
\mathtt{A\varepsilon C\varepsilon TACTG }
\end{array}\!,\, \dots,
\begin{array}{l}
\mathtt{ACCGT\varepsilon C\varepsilon A}\\
\mathtt{A\varepsilon C\varepsilon TACTG }
\end{array}
\right)
\end{array}
\right),
\end{equation}
living in the image $\mathsf{Ran}_{\iota}T(\tau)= L_8([\mathtt{1\separ0\separ1\separ1}]) \times T(!_{[9]},[\mathtt{0\separ0\separ1\separ1}])^{\times 9}$, is sent through $\mathsf{Ran}_{\iota}(\kappa_i \circ \sigma)_{\tau}$ to the following tuple in $\mathsf{Ran}_{\iota}(\pi_i^{*}E_{b}^{\varepsilon} \circ \iota)(\tau)$.
\begin{equation}\label{eq:image_through_kappa_sigma_tau_resolving}
\left(
\begin{array}{l}
\mathtt{ACCGTC\varepsilon A}
\end{array}\!,\,\left(
\begin{array}{l}
\mathtt{ACCGT\varepsilon C\varepsilon A}
\end{array}\!,\,
\begin{array}{l}
\mathtt{ACCGT\varepsilon C\varepsilon A}
\end{array}\!,\, \dots,
\begin{array}{l}
\mathtt{ACCGT\varepsilon C\varepsilon A}
\end{array}\right)
\right)
\end{equation}
As can be seen, the element $z = \mathtt{ACCGTC\varepsilon A}$ cannot be a valid integration of the element given in (\ref{exa:element_x_y}) because the image of (\ref{exa:element_x_y}) through $\mathsf{Ran}_{\iota}(\kappa_i \circ \sigma)_{\tau}$ -- shown in (\ref{eq:image_through_kappa_sigma_tau_resolving}) -- is not equal to the image of $z$ through $\eta_{\tau}$ -- shown in (\ref{eq:image_through_eta_tau_resolving}).
The reader can easily see that the main reason for this is that the rightmost  tuple of (\ref{exa:element_x_y}) (which stands for $y = (y_1,\dots,y_9)$) consists of equal components (\emph{i.e.} $y_1 = \dots = y_9$) while the existence of a lift would imply that the resulting collection of components $y_{1,i},\dots,y_{9,i}$ can equal the distinct components of the image of $z$ through $\eta_{\tau}$. Obviously, to do so, we would need to make the components of the rightmost tuple of (\ref{exa:element_x_y}) vary through various pairwise sequence alignments.  
Unfortunately, an analysis of the elements of $T(!_{[9]},[\mathtt{0\separ0\separ1\separ1}])$ quickly reveals that the set $T(!_{[9]},[\mathtt{0\separ0\separ1\separ1}])$  does not contain enough elements to make the elements of $L_8([\mathtt{1\separ0\separ1\separ1}]) \times T(!_{[9]},[\mathtt{0\separ0\separ1\separ1}])^{\times 9}$ match the images of the morphism $\eta_{\tau}$. This shows that there is not enough evidence that the alignment 
\begin{equation}\label{eq:good_bad_alignment}
\begin{array}{l}
\mathtt{ACCGACTG}\\
\mathtt{ACCGTC\varepsilon A}\\
\mathtt{A\varepsilon CTACTG }
\end{array}
\end{equation}
is a good alignment from the point of view of \texttt{Craig} (\emph{i.e.} $i = \texttt{c}$). Here the main obstruction is that the exponent of $T(!_{[9]},[\mathtt{0\separ0\separ1\separ1}])^{\times 9}$ is too big for the cardinality of $T(!_{[9]},[\mathtt{0\separ0\separ1\separ1}])$ (which is due to the uncertainty related to aligning distant DNA sequences). On the other hand, adding more colors (see Example \ref{rem:resolving_inconsistencies}) and using more complex topologies (see Example \ref{exa:uncertainty_vs_chromologies}) can reduce the exponent of $T(!_{[9]},[\mathtt{0\separ0\separ1\separ1}])^{\times 9}$, which would have the consequence of making alignment (\ref{eq:good_bad_alignment}) more likely to be liftable from the point of view of the added knowledge.
\end{example} 

\begin{remark}[Multiple sequence alignments and mechanisms]\label{ref:slices_multiple_seq_align_and_mechanisms}
Tuple (\ref{exa:element_x_y}), displayed in Example \ref{exa:lifting_gluings_of_tables}, shows us that slices produce two types of data integration. The first type of integration looks at the construction of multiple sequence alignments, such as the one shown on the left of tuple (\ref{exa:element_x_y}), while the second type of integration concerns the other part of tuple (\ref{exa:element_x_y}) that consists of the collection of pairwise sequence alignments. Example \ref{exa:lifting_gluings_of_tables} does not tell us much about this type of integration, except for creating some uncertainty. In section \ref{ssec:slices_and_mechanisms}, we will see that this uncertainty is actually associated with the presence and detection of mutation mechanisms. In the case of Example \ref{exa:lifting_gluings_of_tables}, no mechanism could be recognized from the data, which translates into an absence of lift.
\end{remark}

\begin{remark}[Finding a multiple sequence alignment]\label{rem:finding_right_seq_alignment}
Example \ref{exa:lifting_gluings_of_tables} and Remark \ref{ref:slices_multiple_seq_align_and_mechanisms} implicitly motivate an algebraic method to select  multiple sequence alignments along with mechanisms. Specifically, the method would look at the pullbacks of the slices of a certain sequence alignment functor, say $(\iota,T,\sigma)$ and, for a given segment $\tau$, would try to find the maximal subset $A' \subseteq A$ for which the wide pullback of the $i$-slices, for every $i \in A'$, is maximal at the segment $\tau$ (see below).
\[
\xymatrix{
&[T/\mathbf{A}E_{b}^{\varepsilon}]_{i_1}(\tau)\ar[rd]&\\
\cap_{i \in A'} [T/\mathbf{A}E_{b}^{\varepsilon}]_{i}(\tau)\ar[ru]\ar[r]\ar[rd]&[T/\mathbf{A}E_{b}^{\varepsilon}]_{i_2}(\tau)\ar@{}[d]|{\vdots}\ar[r]&\mathsf{Ran}_{\iota} T(\tau)\\
&[T/\mathbf{A}E_{b}^{\varepsilon}]_{i_n}(\tau)\ar[ru]&
}
\]
Ideally, the segment $\tau$ should only be made of a maximal color, but segments of intermediate colors could also be used for heuristics, if necessary. 
\end{remark}

\subsection{Slices and mechanisms}\label{ssec:slices_and_mechanisms}
In section \ref{sec:slices_seq_alignment}, we showed that the cones associated with the slices of a sequence alignment functor $(\iota,T,\sigma)$ could be used to query multiple sequence alignments that make sense from the point of view of a particular individual (see Example \ref{exa:lifting_gluings_of_tables}). Then, in Remark \ref{rem:finding_right_seq_alignment}, we suggested that these queries could be used to find a set of multiple sequence alignments in the right Kan extension of $T$ along $\iota$ that maximize the number of individuals agreeing with the alignments.

The goal of this section is to show that the querying process inherent to slices (Definition \ref{def:Slices}) can be used to query mechanisms, too. Here, we will show that mechanisms such as duplication events, which are mutations responsible for triggering certain cancers \cite{ReamsRoth}, and inversion events, which are rearrangements of certain sections of a segment in reverse order, can be detected through particular types of cones (see Remark \ref{rem:resolving_uncertainties}).

Note that previous works have already investigated the recognition of duplication and inversion mechanisms in multiple sequence alignments \cite{SammethEDSI,SammethIEEE,Schoniger,Vellozo}. While these papers develop methods for predetermined patterns, slices are more flexible in that they inform us of existing patterns without requiring us to know what these patterns should look like. More specifically, these patterns are encoded in the shape of the cones described in Remark \ref{rem:resolving_uncertainties} and are thus systematically given through the computation of right Kan extension. The recognition of mechanisms could then be done by comparing the shape of the cones for which the slices are non empty with the shape of cones that are known to characterize specific mechanisms.

The present section does not introduce any new concept and only aims to show examples. The reader will be assumed to remember the reasoning of Example \ref{exa:lifting_gluings_of_tables}, which we intend to mimic in this section -- the goal is again to match certain tuples through the arrows of diagram (\ref{eq:pullback_sequence_alignment}). 

\begin{example}[Duplication mechanisms]\label{exa:slices_and_mechanisms_duplications}
For our first example, we shall let $\mathbf{A}$ denote the alignment specification defined by the following collection of projections (already used in Example \ref{exa:Alignment_specification})
\[
\{\pi_i:\{0,1\}^{\times 4} \to \{0,1\}\}_{i \in \{\mathtt{a},\mathtt{b},\mathtt{c},\mathtt{d}\}}
\]
and let $b$ be the element $(1,1,1,1)$ of $\{0,1\}^{\times 4}$.
Because the sequence alignment functor of Example \ref{exa:sequence_alignments} was built from data that are not suited for a good illustration of duplication mechanisms, we will let $(\iota,T,\sigma)$ be an undetermined sequence alignment functor over $\mathbf{A}E_{b}^{\varepsilon}$, where $E$ is the set $\{\mathtt{A},\mathtt{C},\mathtt{G},\mathtt{T},\varepsilon\}$. Our goal is to illustrate the types of situations in which duplication mechanisms can be queried within the right Kan extension.

For every object $\tau$ in $\mathbf{Seg}(\{0,1\}^{\times 4})$, we will denote by $\rho_{\iota}[\tau]$ the cone in $\mathbf{Seg}(\{0,1\}^{\times 4})$ encoded by the arrows and objects of the category $({\tau \downarrow \iota})$ (see Remark \ref{rem:Extending_category_as_a_cone} for a detailed description).
As in Example \ref{exa:lifting_gluings_of_tables}, we want to study the functor $T$ from the point of view of a certain individual; we will consider \texttt{Craig}, who is, as usual, associated with the index $\mathtt{c}$. Now, suppose that the image of the cone $\rho_{\iota}[\tau]$ via the functor $\mathbf{Seg}(\pi_{\mathtt{c}}):\mathbf{Seg}(\{0,1\}^{\times 4}) \to \mathbf{Seg}(\{0,1\})$ is of the form shown in (\ref{eq:cone_for_duplication}).
\begin{equation}\label{eq:cone_for_duplication}
\xymatrix@C-30pt@R-25pt{
&&&&&&&&&(\mathop{\bullet}\limits^{1}&\mathop{\bullet}\limits^{2})&(\mathop{\bullet}\limits^{3})&(\mathop{\bullet}\limits^{*})&(\bullet&\bullet&\bullet&\bullet)\\
(\mathop{\bullet}\limits^{1}&\mathop{\bullet}\limits^{2})&(\mathop{\bullet}\limits^{3})&(\bullet&\bullet&\bullet&\bullet)\ar[rru]\ar[rrd]&\quad\quad\quad\quad&&&&&&&&\\
&&&&&&&&&(\mathop{\bullet}\limits^{1}&\mathop{\bullet}\limits^{2})&(\mathop{\bullet}\limits^{*})&(\mathop{\bullet}\limits^{3})&(\bullet&\bullet&\bullet&\bullet)
}
\end{equation}
For such a cone, the pullback of Definition \ref{def:Slices} lifts any element of $\mathsf{Ran}_{\iota}T(\tau)$ to \texttt{Craig}'s slice if this element can be sent, through the function $\mathsf{Ran}_{\iota}(\kappa_{\mathtt{c}}\circ \sigma)_{\tau}:\mathsf{Ran}_{\iota}T(\tau)  \to \mathsf{Ran}_{\iota}(\pi_{\mathtt{c}}^{*}E_{b}^{\varepsilon} \circ \iota)(\tau)$, to an image of $\eta_{\tau}$ in $\mathsf{Ran}_{\iota}(\pi_{\mathtt{c}}^{*}E_{b}^{\varepsilon} \circ \iota)(\tau)$ -- for instance, a pair of the following form.
\begin{equation}\label{eq:matching_pair_cone_for_duplication}
\eta_{\tau}(\mathtt{x_1x_2 Z x_3x_4x_5x_6}) = (\mathtt{x_1x_2\varepsilon Z x_3x_4x_5x_6},\,\mathtt{x_1x_2 Z \varepsilon x_3x_4x_5x_6})
\end{equation}
If our sequence alignment functor $(\iota,T,\sigma)$ is constructed from the outputs of a dynamic programming algorithm, as in Example \ref{exa:sequence_alignments}, then the elements of $\mathsf{Ran}_{\iota}T(\tau)$ that match pair (\ref{eq:matching_pair_cone_for_duplication}) through the function $\mathsf{Ran}_{\iota}(\kappa_{\mathtt{c}}\circ \sigma)_{\tau}$ will most likely be tuples of sequence alignments in which each of the components of tuple (\ref{eq:matching_pair_cone_for_duplication}) appears. An example of such an element in $\mathsf{Ran}_{\iota}T(\tau)$ is given by the following pair of sequence alignments, which tries to align the sequence $\mathtt{x_1x_2ZZ x_3x_4x_5x_6}$ with the sequence $\mathtt{x_1x_2Z x_3x_4x_5x_6}$.
\[
\left(
\begin{array}{l}
\begin{array}{l}
\mathtt{x_1x_2\varepsilon Zx_3x_4x_5x_6}\\
\mathtt{x_1x_2ZZx_3x_4x_5x_6}
\end{array}\!,\,
\begin{array}{l}
\mathtt{x_1x_2 Z \varepsilon x_3x_4x_5x_6}\\
\mathtt{x_1x_2ZZx_3x_4x_5x_6}
\end{array}
\end{array}
\right)
\]
In this case, the tuple given below lives in the image of \texttt{Craig}'s slice at the segment $\tau$ (\emph{i.e.} $[T/\mathbf{A}E_{b}^{\varepsilon}]_{\mathtt{c}}(\tau)$) and plays the role of the desired lift.
\[
\left(
\begin{array}{l}
\begin{array}{l}
\mathtt{x_1x_2\varepsilon Zx_3x_4x_5x_6}\\
\mathtt{x_1x_2ZZx_3x_4x_5x_6}
\end{array}\!,\,
\begin{array}{l}
\mathtt{x_1x_2 Z \varepsilon x_3x_4x_5x_6}\\
\mathtt{x_1x_2ZZx_3x_4x_5x_6}
\end{array}\!,\,
\begin{array}{l}
\mathtt{x_1x_2Z x_3x_4x_5x_6}
\end{array}
\end{array}
\right)
\]
Intuitively, the previous element tells us that \texttt{Craig} is separated from a certain other individual (whose genetic data is $\mathtt{x_1x_2ZZx_3x_4x_5x_6}$) by a duplication mechanism. On the other hand, if the sequence of the other individual were of a different form, say $\mathtt{x_1x_2ZYx_3x_4x_5x_6}$, the scoring system of the dynamic programming algorithm would only be expected to provide the sequence alignment given below, on the left (at least for an adequate scoring system).
\[
\begin{array}{l}
\mathtt{x_1x_2Z\varepsilon x_3x_4x_5x_6}\\
\mathtt{x_1x_2ZYx_3x_4x_5x_6}
\end{array}
\quad\quad\quad\quad\quad
\xcancel{
\begin{array}{l}
\mathtt{x_1x_2\varepsilon Zx_3x_4x_5x_6}\\
\mathtt{x_1x_2ZYx_3x_4x_5x_6}
\end{array}}
\]
In the end, this would prevent the existence of an element in $\mathsf{Ran}_{\iota}T(\tau)$ matching tuple (\ref{eq:matching_pair_cone_for_duplication}) and would hence prevent the existence of a lift to \texttt{Craig}'s slice. 

To conclude, designing the domain $B$ of the functor $T$ (either using colors or brackets) such that cone (\ref{eq:cone_for_duplication}) is the image of a cone of the form $\rho_{\iota}[\tau]$ via $\mathbf{Seg}(\pi_{\mathtt{c}})$ will force the pullback of Definition \ref{def:Slices} to select alignments that may be explained by duplication mechanisms -- at least from \texttt{Craig}'s viewpoint.
\end{example}

\begin{remark}[Types of cones detecting mechanisms]
Note that, while multiple sequence alignments would be lifted along cones that belong to chromologies (as in Remark \ref{rem:resolving_uncertainties}),  mechanisms (such as duplications) would usually be lifted along cones that are not part of chromologies, since they would usually be made of morphisms of the type described in Example \ref{exa:Flexibility_morphism} to create mutation events (see diagram (\ref{eq:cone_for_duplication}) and the definition of section \ref{ssec:chromologies}). 
\end{remark}

\begin{example}[Inversion mechanisms]\label{exa:slices_and_mechanisms_inversions}
Let us now give an example of a cone that lifts inversion mechanisms. This time, we will need to take $\Omega$ to be the pre-ordered set $\{0 \leq 1 \leq 2\}$. We will implicitly use the color 2 of $\Omega$ to restrict the number of arrows that $\rho_{\iota}[\tau]$ may possess (see diagram (\ref{eq:cone_inversions})). In addition, we will to take $\mathbf{A}$ to be the alignment specification consisting of the following projections.
\[
\{\pi_i:\{0,1,2\}^{\times 4} \to \{0,1,2\}\}_{i \in \{\mathtt{a},\mathtt{b},\mathtt{c},\mathtt{d}\}}
\]
The element $b$ will be taken to be equal to the element $(1,1,1,1)$ of $\{0,1,2\}^{\times 4}$ and $(\iota,T,\sigma)$ will denote an undetermined sequence alignment over $\mathbf{A}E_{b}^{\varepsilon}$, where $E$ is the set $\{\mathtt{A},\mathtt{C},\mathtt{G},\mathtt{T},\varepsilon\}$. For the present example, we will suppose that the image of the cone $\rho_{\iota}[\tau]$ via the functor $\mathbf{Seg}(\pi_{\mathtt{c}}):\mathbf{Seg}(\{0,1,2\}^{\times 4}) \to \mathbf{Seg}(\{0,1,2\})$ is of the form given in (\ref{eq:cone_inversions}).
\begin{equation}\label{eq:cone_inversions}
\xymatrix@C-30pt@R-15pt{
&&&&&&&&&&(\mathop{\mathtt{1}}\limits^{1}&\mathop{\mathtt{1}}\limits^{2})&(\mathop{\mathtt{2}}\limits^{*})&(\mathop{\mathtt{2}}\limits^{*})&(\mathop{\mathtt{1}}\limits^{3})&(\mathop{\mathtt{1}}\limits^{4})&(\mathop{\mathtt{1}}\limits^{5})&(\mathtt{1}&\mathtt{1}&\mathtt{1}&\mathtt{1})\\
(\mathop{\mathtt{1}}\limits^{1}&\mathop{\mathtt{1}}\limits^{2})&(\mathop{\mathtt{1}}\limits^{3})&(\mathop{\mathtt{1}}\limits^{4})&(\mathop{\mathtt{1}}\limits^{5})&(\mathtt{1}&\mathtt{1}&\mathtt{1}&\mathtt{1})\ar[rru]\ar[rrd]\ar[rr]&\quad\quad\quad\quad&(\mathop{\mathtt{1}}\limits^{1}&\mathop{\mathtt{1}}\limits^{2})&(\mathop{\mathtt{1}}\limits^{3})&(\mathop{\mathtt{2}}\limits^{*})&(\mathop{\mathtt{1}}\limits^{4})&(\mathop{\mathtt{2}}\limits^{*})&(\mathop{\mathtt{1}}\limits^{5})&(\mathtt{1}&\mathtt{1}&\mathtt{1}&\mathtt{1})\\
&&&&&&&&&&(\mathop{\mathtt{1}}\limits^{1}&\mathop{\mathtt{1}}\limits^{2})&(\mathop{\mathtt{1}}\limits^{3})&(\mathop{\mathtt{1}}\limits^{4})&(\mathop{\mathtt{1}}\limits^{5})&(\mathop{\mathtt{2}}\limits^{*})&(\mathop{\mathtt{2}}\limits^{*})&(\mathtt{1}&\mathtt{1}&\mathtt{1}&\mathtt{1})
}
\end{equation}

We want to show that, for such a cone, \texttt{Craig}'s slice, evaluated at the segment $\tau$, can detect inversion mechanisms.
First, by the shape of cone (\ref{eq:cone_inversions}), the pullback of Definition \ref{def:Slices} lifts any element of $\mathsf{Ran}_{\iota}T(\tau)$ to \texttt{Craig}'s slice if this element can be sent, through the function $\mathsf{Ran}_{\iota}(\kappa_{\mathtt{c}}\circ \sigma)_{\tau}:\mathsf{Ran}_{\iota}T(\tau)  \to \mathsf{Ran}_{\iota}(\pi_{\mathtt{c}}^{*}E_{b}^{\varepsilon} \circ \iota)(\tau)$, to an image of $\eta_{\tau}$ in $\mathsf{Ran}_{\iota}(\pi_{\mathtt{c}}^{*}E_{b}^{\varepsilon} \circ \iota)(\tau)$, and hence a triple of the following form.
\begin{equation}\label{eq:matching_pair_cone_for_inversion}
(
\mathtt{x_1x_2\varepsilon\varepsilon ABCx_3x_4x_5x_6},\,
\mathtt{x_1x_2 A\varepsilon B \varepsilon C x_3x_4x_5x_6},\,
\mathtt{x_1x_2 ABC \varepsilon\varepsilon x_3x_4x_5x_6}
)
\end{equation}
If our sequence alignment functor $(\iota,T,\sigma)$ is constructed from the outputs of a dynamic programming algorithm, as in Example \ref{exa:sequence_alignments}, then the elements of $\mathsf{Ran}_{\iota}T(\tau)$ that match pair (\ref{eq:matching_pair_cone_for_inversion}) through the function $\mathsf{Ran}_{\iota}(\kappa_{\mathtt{c}}\circ \sigma)_{\tau}$ will most likely be tuples of sequence alignments in which each of the components of (\ref{eq:matching_pair_cone_for_inversion}) appears. An example of such an element in $\mathsf{Ran}_{\iota}T(\tau)$ is given by the following triple of sequence alignments, which tries to align the sequence $\mathtt{x_1x_2ABCx_3x_4x_5x_6}$ with the sequence $\mathtt{x_1x_2CBAx_3x_4x_5x_6}$.
\begin{equation}\label{eq:triple_inversion_query_RKE}
\left(
\begin{array}{l}
\begin{array}{l}
\mathtt{x_1x_2\varepsilon\varepsilon ABCx_3x_4x_5x_6}\\
\mathtt{x_1x_2CBA\varepsilon\varepsilon x_3x_4x_5x_6}
\end{array}\!,\,
\begin{array}{l}
\mathtt{x_1x_2 A\varepsilon B \varepsilon C x_3x_4x_5x_6}\\
\mathtt{x_1x_2\varepsilon CBA \varepsilon x_3x_4x_5x_6}
\end{array}\!,\,
\begin{array}{l}
\mathtt{x_1x_2 ABC \varepsilon\varepsilon x_3x_4x_5x_6}\\
\mathtt{x_1x_2\varepsilon\varepsilon CBA  x_3x_4x_5x_6}
\end{array}
\end{array}
\right)
\end{equation}
In this case, the tuple displayed below lives in the image of \texttt{Craig}'s slice at the segment $\tau$ and plays the role of the desired lift.
\[
\left(
\begin{array}{l}
\begin{array}{l}
\mathtt{x_1x_2\varepsilon\varepsilon ABCx_3x_4x_5x_6}\\
\mathtt{x_1x_2CBA\varepsilon\varepsilon x_3x_4x_5x_6}
\end{array}\!,\,
\begin{array}{l}
\mathtt{x_1x_2 A\varepsilon B \varepsilon C x_3x_4x_5x_6}\\
\mathtt{x_1x_2\varepsilon CBA \varepsilon x_3x_4x_5x_6}
\end{array}\!,\,
\begin{array}{l}
\mathtt{x_1x_2 ABC \varepsilon\varepsilon x_3x_4x_5x_6}\\
\mathtt{x_1x_2\varepsilon\varepsilon CBA  x_3x_4x_5x_6}
\end{array}\!,\,
\mathtt{x_1x_2ABCx_3x_4x_5x_6}
\end{array}
\right)
\]
As can be seen, this type of tuple tries to align the sequence $\mathtt{x_1x_2 ABCx_3x_4x_5x_6}$ with the sequence $\mathtt{x_1x_2 CBAx_3x_4x_5x_6}$, which are clearly related by an inversion of the patch $\mathtt{ABC}$. As in Example \ref{exa:slices_and_mechanisms_duplications}, trying a different sequence, say $\mathtt{x_1x_2 EFGx_3x_4x_5x_6}$, is unlikely to create a triple as in (\ref{eq:triple_inversion_query_RKE}) and hence a lift to \texttt{Craig}'s slice. In other words, the earlier tuple specifically tells us that \texttt{Craig} is separated from a certain other individual (whose genetic data is $\mathtt{x_1x_2CBAx_3x_4x_5x_6}$) by an inversion mechanism.

To conclude, designing the domain $B$ of the functor $T$ such that cone (\ref{eq:cone_inversions}) is the image of a cone of the form $\rho_{\iota}[\tau]$ via $\mathbf{Seg}(\pi_{\mathtt{c}})$ will force the pullback of Definition \ref{def:Slices} to select alignments that may be explained by inversion mechanisms -- at least from \texttt{Craig}'s viewpoint.
\end{example}

\section{Conclusion}

We formalized the concept of sequence alignment in terms of a subcategory $B$ of segments (Definitions \ref{ssec:Segments} \& \ref{ssec:Morphisms_of_chromosomal_patches}) and a functor $T:B \to \mathbf{Set}$ whose images contain usual sequence alignments (Definition \ref{def:sequence_alignment}). We showed that we could design comparison rules between the sequence alignments contained in $T$ through the structure of the category $B$ (Examples \ref{exa:sequence_alignments} \& \ref{rem:resolving_inconsistencies}). We then showed that we could integrate multiple sequence alignments from the data contained in $T$ by using the right Kan extension of $T$ to the whole category of segments (section \ref{ssec:From_RKE_to_MSA}). In addition, we showed how the right Kan extension of $T$ could be used to study the consistency of the integrated data through the use of limits and the existence of certain functions (see Example \ref{exa:reasoning_with_sequence_alignment}). While inconsistent data could be associated with non-surjective functions, consistent data could be associated with either surjections or isomorphisms. Regarding these last two types of arrows, we showed that isomorphisms informed us of a perfect consistency (Example \ref{exa:right_kan_extension_data_consistency}) while surjections indicated some uncertainty (Examples 
\ref{exa:Reasoning_with_right_Kan_extensions}, \& \ref{exa:Global-local_seq_alignments}), which we later related to the presence of mutation mechanisms (Remark \ref{ref:slices_multiple_seq_align_and_mechanisms}).
We then introduced the concept of slice (Definition \ref{def:Slices}) as a way to resolve this uncertainty and, at the same time, discover mutation mechanisms (Remark \ref{rem:finding_right_seq_alignment} and section \ref{ssec:slices_and_mechanisms}).


\bibliographystyle{plain}

\end{document}